\DeclareRobustCommand{\vect}[1]{\bm{#1}}
	\renewcommand{\vect}[1]{#1}%
\newcounter{sentenceI}
\renewcommand{\thesentenceI}{I}
\newcommand{\sentenceI}[1]{%
	\refstepcounter{sentenceI}%
	\label{#1}%
	\thesentenceI%
}
\newcounter{sentenceII}
\renewcommand{\thesentenceII}{II}
\newcommand{\sentenceII}[1]{%
	\refstepcounter{sentenceII}%
	\label{#1}%
	\thesentenceII%
}
\crefname{sentenceI}{sentence}{sentences}
\crefname{sentenceII}{sentence}{sentences}
\numberwithin{equation}{section}
\newtheorem{Definition}{Definition}[section]
\newtheorem{Remark}{Remark}[section]
\newtheorem{Theorem}{Theorem}[section]
\newtheorem{Lemma}{Lemma}[section]
\newtheorem{Proposition}{Proposition}[section]
\newtheorem{Corollary}{Corollary}[section]
\newtheorem{Assumption}{Assumption}[section]
\newtheorem{Example}{Example}[section]
\newcommand{\be}{\begin{equation}}
\newcommand{\ee}{\end{equation}}
\newcommand{\bee}{\begin{equation*}}
\newcommand{\eee}{\end{equation*}}
\newcommand{\bi}{\begin{itemize}}
\newcommand{\ei}{\end{itemize}}
\DeclareMathOperator*{\argmax}{arg\,max}
\def \E{\mathbb{E}}
\def \N{\mathbb{N}}
\def \P{\mathbb{P}}
\def \R{\mathbb{R}}
\def \Pc{{\mathcal P}}
\def \Wc{{\mathcal W}}
\def \eps{\varepsilon}
\def \supp{\operatorname{\texttt{supp}}}
\def \pitildebm{\bm{\pi}}
\def \T{{\mathbb T}}
\def \bmpi{\bm{\pi}}
\def \pitilde{{\tilde \pi}}
\def \Pitilde{{\widetilde \Pi}}
\def \pitildebm{{\bm{\tilde \pi}}}
\def \pihat{{\hat \pi}}
\def \muhat{{\hat \mu}}
\newcommand{\Vast}{\bBigg@{3}}
\title{On Time-Inconsistency in Mean Field Games}
\author{Erhan Bayraktar\thanks{
		Department of Mathematics, University of Michigan, email: \texttt{erhan@umich.edu}. E. Bayraktar is partially supported by the National Science Foundation under grant DMS2106556 and by the Susan M. Smith chair.}	
	\and Zhenhua Wang\thanks{
		Department of Mathematics, Iowa State University, email: \texttt{zhenhuaw@iastate.edu}. }
}	
\date{}
\begin{document}

\maketitle

\begin{abstract}
We investigate an infinite-horizon time-inconsistent mean-field game (MFG) in a discrete time setting. We first present a classic equilibrium for the MFG and its associated existence result. This classic equilibrium aligns with the conventional equilibrium concept studied in MFG literature when the context is time-consistent. Then we demonstrate that while this equilibrium produces an approximate optimal strategy when applied to the related $N$-agent games, it does so solely in a precommitment sense. Therefore, it cannot function as a genuinely approximate equilibrium strategy from the perspective of a sophisticated agent within the $N$-agent game. To address this limitation, we propose a new {\it consistent} equilibrium concept in both the MFG and the $N$-agent game.
We show that a consistent equilibrium in the MFG can indeed function as an approximate consistent equilibrium in the $N$-agent game. Additionally, we analyze the convergence of consistent equilibria for $N$-agent games toward a consistent MFG equilibrium as $N$ tends to infinity.
\end{abstract}
{\bf Keywords:} Mean field game, Time-inconsistency, Nash equilibrium
%{\bf MSC(2020):}

{
	\hypersetup{linkcolor=black}
	\tableofcontents
}

\section{Introduction}
Mean field game (MFG) theory has attracted a lot of attention during the last decade. This field studies limiting models for weakly interacting $N$-agent stochastic games. The primary focus of existing studies on MFG in both discrete-time and continuous-time frameworks lies within the time-consistent context, and research in this area encompasses the analysis of two key topics. First, it has been shown that an equilibrium in the (time-consistent) MFG can serve as an approximate equilibrium in the (time-consistent) $N$-agent game when $N$ is large enough; see, e.g. \cite{carmona2013probabilistic, saldi2018markov, anahtarci2023q}. Secondly, efforts have also been made to show that equilibria in (time-consistent) $N$-agent games can approach an equilibrium in the (time-consistent) MFG as $N$ tends to infinity; see, e.g., \cite{gomes2013continuous, lacker2016general, lacker2017limit, fischer2017connection, bayraktar2018analysis, cardaliaguet2019master, bayraktar2022finite, bayraktar2023mean}. 
%{\color{black} mean field control \cite{motte2022mean,} }

Given the flow of the population distribution, the MFG reduces to a stochastic control problem for a single agent. It is widely acknowledged that a non-exponentially discounted reward function induces time inconsistency in a stochastic control problem: an optimal control policy derived by an agent today will not be optimal from the eyes of the same agent tomorrow. %{\color{black}In behavioral economics (see e.g., Thaler [21], Loewenstein and Thaler [18], Laibson [17]), it is well documented that the discount rate is em- pirically time-varying and many non-exponential functions have been proposed to model empirical discounting; see e.g., Huang and Zhou [13, Remark 3.1].} 
As proposed in Strotz \cite{strotz1955myopia}, a {\it sophisticated} agent treats a time-inconsistent control problem as an intertemporal game among current self and future selves and looks for a subgame perfect Nash equilibrium: a consistent strategy that, given the future selves follow this strategy, the current self has no incentive to deviate from it. Nash equilibrium in time-inconsistent stochastic controls has been widely explored; for recent developments, see \cite{yong2012time, ekeland2012time, MR4288523, bjork2021time, bayraktar2023relaxed, bayraktar2023existence} and the references therein.

In this paper, we consider an infinite horizon discrete time MFG over a finite state space and its large (but finite) population counterparts. Each agent endeavors to maximize the total expected reward along the time horizon. The reward function is presumed to be in general discounted form, inherently conferring time inconsistency to the context. Consequently, a sophisticated agent shall treat the game as a competition not only with other agents but also with all future selves and try to look for an equilibrium strategy from this perspective. 
Through this paper, we aim to establish suitable equilibrium concepts for the time-inconsistent MFG and its $N$-agent counterparts, and we seek to address the following questions in the context of time-inconsistency.
\bi 
\setlength{\itemindent}{-.37in}
\item[] Question (\sentenceI{sentence:I}): Whether or not an equilibrium for the MFG serves as an approximate equilibrium in $N$-agent games for sufficiently large $N$?
\item[] Question (\sentenceII{sentence:II}): Whether or not a sequence of equilibria in $N$-agent games approaches an equilibrium for the MFG as $N$ tends to infinity? 
\ei
The main contributions of this paper are three-fold:
\bi 
\item We introduce a classic equilibrium concept in Definition \ref{def.nonstationary.relax} for the MFG in the context of time-inconsistency and establish its general existence through Theorem \ref{thm.exist.nonstatoinary}. This equilibrium concept is a truly Nash equilibrium from the perspective of a sophisticated agent (see Remark \ref{rm.explain.def}) within the MFG. It is classic in the sense that it aligns with the established equilibrium concepts in the mainstream of (time-consistent) MFG, as elucidated in Remarks \ref{rm.nonstationary}, \ref{rm:equi.stationary}.

\item We show in Proposition \ref{prop.precommittedN} %and \eqref{eq.prop.precommittedNt} 
that a classic MFG equilibrium in Definition \ref{def.nonstationary.relax} serves an approximate optimal policy within the $N$-agent game,
provided that the following condition hold:
%\begin{center}
	\begin{flushleft}
The initial states of N agents, $\left( s^{N,i}_0 \right)_{1\leq i\leq N}$, are independently simulated by the initial population distribution $\mu_0$ in the classic MFG equilibrium.
	\end{flushleft}
%\end{center}
This assumption of initial states aligns with the conventions prevalent in the (time-consistent) MFG literature when applying an MFG equilibrium in an $N$-agent game. \footnote{Another condition assumed in the literature of time-consistent MFG is that the initial empirical $N$-agent distribution $\mu^N_0$ converges to $\mu_0$, which follows from the above i.i.d. assumption.} However, our exploration in Section \ref{subsec:explain} exposes that such an assumption compromises the outcome in Proposition \ref{prop.precommittedN} in a precommitment manner, thereby disqualifying an equilibrium in Definition \ref{def.nonstationary.relax} for the MFG as a truly approximate equilibrium from the standpoint of a sophisticated agent within the N-agent game.
%another way: However, our exploration in Section 3.2 reveals that such an assumption compromises the outcome in Proposition 3.1 in a precommitment manner, consequently disqualifying Proposition 3.1 as a viable answer to Question (I). This highlights that an MFG equilibrium as defined in Definition 2.1 does not hold as a genuine equilibrium from the perspective of a sophisticated agent within the time-inconsistent N-agent game

\item We propose a novel equilibrium concept called {\it consistent equilibrium} in Definition \ref{def:sophisticated.ne} for the MFG and in Definition \ref{def:N.sophisticated} for the N-agent games, respectively. The consistent MFG equilibrium concept in Definition \ref{def:sophisticated.ne} surpasses the classic MFG equilibrium concept defined in Definition \ref{def.nonstationary.relax}, as highlighted in Remark \ref{rm:sophisticated.give.classic}. Furthermore, we prove in Theorem \ref{thm.MFG.Neps} that a consistent MFG equilibrium indeed functions as a genuine approximate consistent equilibrium from the perspective of a sophisticated agent within the $N$-agent game. Additionally, we establish in Theorem \ref{thm.Nequi.converge} that a sequence of consistent equilibria in $N$-agent games can converge to a consistent MFG equilibrium. Through these analyses, we provide answers to Questions \eqref{sentence:I} \& \eqref{sentence:II}.
\ei 

Literature on the time-inconsistent MFG is rather limited and focuses mainly on linear-quadratic games; see \cite{ni2017time, moon2020linear, wang2023time}. We want to emphasize that these papers define equilibrium in a similar manner as Definition \ref{def.nonstationary.relax}, thus having the same limitation as shown in Section \ref{subsec:explain} when applied within the $N$-agent games. Additionally, it's worth noting three recent articles \cite{djehiche2023time, yu2023time, wang2024sharp} on time-inconsistent mean field stopping problems. In \cite{djehiche2023time}, precommitted strategies are explored under recursive utilities, while \cite{yu2023time} contributes to equilibrium strategies adapted by a social controller across the entire population. {\color{black}\cite{wang2024sharp} studies the so-called sharp equilibrium strategies for mean-field stopping games and shows the existence of a sharp equilibrium in two classes of examples.}

The rest of the paper is organized as follows. Section \ref{sec:MFG.classicequilibrium} introduces the MFG model in the context of time-inconsistency. We present
a classic MFG equilibrium concept (see Definition \ref{def.nonstationary.relax}) in Section \ref{subsec:MFG.equilibrium}, while the proof of its general existence (see Theorem \ref{thm.exist.nonstatoinary}) is provided in Section \ref{subsec:proof.nonstationary}. 
Then, we connect the MFG to its $N$-agent game counterpart in Section \ref{sec:Nagent}. Section \ref{subsec:Ngame.notations} describes the N-agent game model in the context of time-inconsistency and demonstrates an approximate optimality result (see Proposition \ref{prop.precommittedN}) when applying a classic MFG equilibrium in Definition \ref{def.nonstationary.relax} to the N-agent games. In Section \ref{subsec:explain}, we clarify that this approximate optimality result applies only in a precommitment manner, indicating that an MFG equilibrium, as defined in Definition \ref{def.nonstationary.relax}, is not a truly approximate equilibrium in the eyes of a sophisticated agent within the $N$-agent game. To address this limitation, we propose new equilibrium concepts, consistent equilibrium, for both the MFG and the $N$-agent game (see Definition \ref{def:sophisticated.ne} and Definition \ref{def:N.sophisticated}, respectively) in Section \ref{sec:sophisticated}. Then we address Question \eqref{sentence:I} and Question \eqref{sentence:II} in Theorem \ref{thm.MFG.Neps} 
%that a consistent MFG equilibrium 
%that is continuous on population distribution 
%can indeed serve as an approximated consistent equilibrium in the $N$-agent game. Conversely,
and in Theorem \ref{thm.Nequi.converge}, respectively, which are the main results in Section \ref{sec:sophisticated}.
%provides when a sequence of consistent equilibria in $N$-agent games approaches to a consistent MFG equilibrium. In Section \ref{subsec:example}, 
We also show in an example study the existence of consistent MFG equilibria that are continuous on population distribution. Section \ref{subsec:proof.sophisticated} summarizes the proofs for Theorems \ref{thm.MFG.Neps}, \ref{thm.Nequi.converge}, {\color{black} and Section \ref{sec:conclusion} concludes the paper.}

\section{Mean field games under the classic equilibrium notion}\label{sec:MFG.classicequilibrium}
Denote by $\T:=\N\cup\{0\}$ the infinite discrete time horizon and {\color{black}$[d]:=\{1,2,...,d\}$ a general state space of size $d$ with discrete metric}. Denote by $a\cdot b$ the product of matrices $a,b$, and we treat a $d$-dimensional row (resp. column) vector as a matrix with dimension equal to $d\times 1$ (resp. $1\times d$). Given $a\in \R^d$, denote by $a(x)$ the $x$-th component of $a$ for $x\in[d]$. For a multivariable function $w(x,\nu)$, denote by $w(\cdot,\nu)$ (resp. $w(x,\cdot)$) the function $x\mapsto w(x,\nu)$ (resp. $\nu\mapsto w(x,\nu)$) when $\nu$ is fixed (resp. $x$ is fixed).

Denote by $\Pc(X)$ all probability measures on a given Borel space $X$.  Then 
$$\Pc([d])= \left\{ \nu=(\nu(1),\cdots,\nu(d)): \nu(x)\geq 0 \;\ \forall x\in[d] \text{ and } \sum_{x\in[d]} \nu(x)=1 \right\},$$ 
is the $d$-simplex, which we treat as a subset in $\R^{d-1}$ and is endowed with the Euclidean metric $|\cdot|$. We further define $\Delta:= [d]\times \Pc([d])$, which is endowed with the product topology.

We take a bounded domain $U\subset\R^\ell$ with positive Lebesgue measure, which plays the role of the action value space during the paper. Through the paper, we endow $\Pc(U)$ with the Wasserstein 1-distance $\Wc_1$. Since $U$ is a bounded domain in $\R^\ell$, the Wasserstein 1-distance on $\Pc(U)$ can be expressed, for $\varpi, \bar\varpi\in \Pc(U)$ that
\be\label{eq.cha.W1}  
\Wc_1 (\varpi, \bar\varpi) =\sup\left\{ \int_U h(u)(\varpi-\bar\varpi)(du): \sup_{u\in U} |h(u)|\leq 1, \sup_{u,\bar u\in U}|h(u)-h(\bar u)|\leq |u-\bar u| \right\},
\ee 
and $\Wc_1$ generates the weak topology on $\Pc(U)$ (for a reference, see \cite[Theorem 8.3.2, Theorem 8.10.45]{bogachev2007measure}). We also denote by $\supp(\varpi)$ the support of a probability measure $\varpi\in \Pc(U)$.

\subsection{Mean field games with time-inconsistency}\label{subsec:MFG.equilibrium}
We now consider an MFG with state space $[d]$ over time horizon $\T$. Let $f(t,x,\nu,u):  \T\times \Delta\times U\mapsto \R$ denote the reward function, which is not necessarily exponentially discounted and thus introduces time-inconsistency in the game. We denote by $P(x,\nu, y, u): \Delta\times [d]\times U \to [0,1]$ the (time-homogeneous) transition function such that $P(x,\nu, y, u)$ represents the probability that a single agent visits the state $y$ from the state $x$ by applying the control value $u$ under population distribution $\nu$.
Denote by $P(x,\nu, u)$ (resp. $P(\nu,u)$)
%the $x$-th row of the transition matrix (resp. the whole transition matrix) by applying action value $u\in U$ under population distribution $\nu\in \Pc([d])$, wherein $P(x,\nu,u)$ represents 
the row vector $(P(x,\nu,y, u))_{y\in[d]}$ %and $P(\nu,u)$ represents 
(resp. the matrix $(P(x,\nu,y, u))_{x,y\in[d]}$). Furthermore, for any $t\in \T, x,y\in [d]$, the functions $f(t,x,\nu,u), P(x,\nu,y,u)$ are assumed to be {\color{black}Borel measurable on $\nu$ and continuous on $u$.}

Throughout this paper, we focus on relaxed type policies, represented as $\Pc(U)$-valued controls, rather than $U$-valued controls. %since the family of $U$-valued controls is generally insufficient for addressing time-inconsistent optimization problems. 
Indeed, \cite{bayraktar2023relaxed} shows that a $U$-valued equilibrium may not even exist for a time-inconsistent stochastic control problem unless certain strict concavity structures are assumed. Therefore, a $U$-valued equilibrium may also not exist for the MFG in the context of time-inconsistency, and it is natural to consider the larger family of $\Pc(U)$-valued policies, which contains all $U$-valued policies by treating a control value $u$ as a Dirac measure in $\Pc(U)$. 

For arbitrary $\varpi\in \Pc(U)$ and a mapping $\upvarpi =\upvarpi(x): [d]\mapsto \Pc(U)$, we further define
\be\label{eq.P} 
\begin{aligned}
	& P^\varpi(x,\nu,y):=\int_U P(x, \nu,y, u)\varpi(du),\quad \forall x, y\in[d], \nu\in \Pc([d]), t\in \T;\\
	&P^\varpi(x,\nu):=\left( P^\varpi(x, \nu, y)\right)_{y\in[d]},
	\quad 
	 f^\varpi(t,x,\nu):=\int_U f(t,x, \nu,u)\varpi(du),\quad \forall (x,\nu)\in \Delta, t\in \T;\\
	 & P^{\upvarpi}(\nu):=\left(\int_U P(x, \nu, y, u)\upvarpi(x)(du) \right)_{x,y\in[d]}\quad \forall \nu\in \Pc([d]).
\end{aligned}
\ee
That is, $P^{\upvarpi}(\nu)$ represents the transition matrix with the $x$-th row equal to $P^{\upvarpi(x)}(x,\nu)$ for each $x\in[d]$.

A relaxed type policy $\pi$ dependent on both time and state takes the form $\pi=(\pi_t(x))_{x\in[d], t\in \T}: \T\times [d]\mapsto \Pc(U)$. We further write $\pi_t$ for the vector $(\pi_t(x))_{x\in [d]}$ and denote by $\Pi$ the set that contains all such time-state dependent relaxed policies. A population flow can be treated as a mapping $\mu=(\mu_t)_{t\in \T}: \T\mapsto \Pc([d])$, and we denote by $\Lambda$ the set that contains all such mappings. 
Given a policy $\pi\in \Pi$ (resp. a population flow $\mu\in \Lambda$) and two time values $0\leq t_0<t_1\leq \infty$, {\color{black}we shall apply the short notation $\pi_{[t_0:t_1)}$ (resp. $\mu_{[t_0:t_1)}$) for the segment of flow $(\pi_t)_{t\in \T}$ from $t_0$ to $t_1-1$ (resp. for the segment of flow $(\mu_t)_{t\in \T}$ from $t_0$ and $t_1-1$),  and write $\pi=\pi_{[0,\infty)}=(\pi_t)_{t\in \T}$ (resp. $\mu=\mu_{[0,\infty)}=(\mu_t)_{t\in \T}$). Moreover, given $\pi,\pi'\in \Pi$ and two integers $0\leq t_0<t_1$, we denote by $\pi'_{[t_0: t_1)}\otimes \pi_{[t_1:\infty)}$ the new pasting policy for applying $\pi'$ through steps $t_0$ to $t_1-1$ and switching to $\pi$ afterward.} %When $t_1 = t_0+1$, we simply write $\pi'_{[t_0: t_1)}\otimes \pi_{[t_1:\infty)}$ as $\pi'_{t_0}\otimes_1 \pi_{[t_0+1:\infty)}$.}

We denote by $(s^{\pi,\mu}_t)_{t\in \T}$ the dynamic of a single agent for applying a policy $\pi\in \Pi$ under a given population flow $\mu\in \Lambda$. At each step $t$, $s^{\pi,\mu}_{t+1}$ is generated according to $P \left( s^{\pi,\mu}_{t},\mu_t, \alpha_t \right)$, where $\alpha_t$ is a random variable  valued in $U$ and is simulated independently by $\pi_t\left(s^{\pi,\mu}_t \right)$.
For such $(\pi,\mu)\in \Pi \times \Lambda$, we further define, for each $t\in \T$ that
\be\label{eq.def.J} 
J^{\pi_{[t:\infty)}}\left(x, \nu,\mu_{[t+1:\infty)}\right):= \E\left[ \sum_{l\in \T} f^{\pi_{t+l}\left( s^{\pi,\mu}_{t+l} \right)}\left(l, s_{t+l}^{\pi,\mu}, \mu_{t+l} \right)  \bigg | s^{\pi,\mu}_t=x, \mu_t=\nu \right].
\ee 
Notice that  $\mu_t=\nu$ and $\mu_{[t+1:\infty)}$ together consist the tail of the population flow after step $t-1$. Then $J^{\pi_{[t:\infty)}}\left(x, \nu,\mu_{[t+1:\infty)}\right)$ represents the expected total pay-off after $t-1$  for a single agent (conditional on that $s^{\pi,\mu}_t=x$ and $\mu_t=\nu$). And we write $J^{\pi}\left(x, \nu,\mu_{[1:\infty)}\right)$ for short when $t=0$ in \eqref{eq.def.J}, which represents the expected future pay-off at time 0 for starting at state $x$ with $\mu_0=\nu$.

%\subsection{A classic equilibrium in time-inconsistent mean field game}

Given $\pi\in\Pi$ and $\nu\in \Pc([d])$, denote by $\mu^{\pi,\nu}:=\mu^{\pi, \nu}_{[0:\infty)}$ the population flow starting at $\nu$ for all agents applying the same policy $\pi$. That is, $\mu^{\pi, \nu}_{[0:\infty)}: \T\mapsto \Pc([d])$ satisfies
\be\label{eq:mu.pi} 
\mu_{t+1}^{\pi,\nu} = \mu_t^{\pi, \nu} \cdot P^{\pi_t}\left( \mu_t^{\pi, \nu} \right) \quad  t\in \T,\quad \text{and }\mu_0^{\pi, \nu}=\nu. %v^{\pi}(t,x)=J^{\pi_{[t:\infty)}}\left( x,\mu^{\pi, \nu}_t, \mu^{\pi, \nu}_{[t+1:\infty)} \right).%=\E_x\left[ \sum_{l\in \T}^\infty  f^{\pi(s^\pi_t)}\left(l, s_l^{\pi,t}, \mu^{\pi}_{t+l} \right) \right],
\ee
Notice that the flow $\mu^{\pi, \nu}_{[0:\infty)}$ is fully determined by $\pi\in \Pi$ and initial value $\nu$.

We now provide the main assumption in the paper, which ensures that all value functions throughout the paper are well-defined. 
\begin{Assumption}\label{assum.bound}
	The function $t\mapsto \underset{(x,\nu)\in \Delta, u\in U}{\sup}|f(t,x,\nu,u)|$ is decreasing with 
	$$  
	\sum_{t\in \T} \sup_{(x,\nu)\in \Delta, u\in U} |f(t,x,\nu,u)|<\infty.
	$$
\end{Assumption}

\begin{Definition}\label{def.nonstationary.relax}
	Take an initial population distribution $\nu\in \Pc([d])$. Consider a pair $(\pi,\mu)\in \Pi\times\Lambda$ with $\mu_0=\nu$. We call $(\pi,\mu)$ is an equilibrium for the MFG with initial population $\nu$ 
	%for the time-inconsistent MFG
	if 
	\bi 
	\item[A.] $\mu$ is the deterministic flow for all agents applying $\pi$, that is,
	\be\label{eq.def.nonstationary0}  
	\mu_{t} = \mu^{\pi, \nu}_{t}%=\mu_t\cdot P^{\pi_t}(\mu_t)%:=\left(\sum_{y\in [d]} \mu_t(y) \int_U P(x, \mu_t,y, u)\pi_t(y)(du)\right)_{x\in[d]}
	\quad \forall t\geq 1.
	\ee 
	\item[B.] %For each $t\in \T$, provided that the population flow tail $\mu_{[t:\infty)}$ and all future selves for a single agent following the tail $\pi_{[t+1:\infty)}$ after $t$, $\pi_t$ provides the optimal reaction for a single agent at moment $t$. That is, 
	For each $t\in \T$,
	\be\label{eq.def.nonstationary1}  
	J^{\pi_{[t:\infty)}}\left(x, \mu_t, \mu_{[t+1:\infty)}\right) =\sup_{\pi'\in \Pi}	J^{\pi'_{[t:t+1)} \otimes\pi_{[t+1:\infty)}}\left( x,\mu_t, \mu_{[t+1:\infty)}\right)\quad \forall x\in [d].
	\ee
	\ei 
\end{Definition}

Due to the discrete time setting, we can replace $\sup_{\pi'\in \Pi}	J^{\pi'_{[t:t+1)} \otimes\pi_{[t+1:\infty)}}\left( x,\mu_t, \mu_{[t+1:\infty)}\right)$ on the right-hand side of \eqref{eq.def.nonstationary1} by 
\be\label{eq:mfg.varpi} 
\sup_{\varpi\in \Pc(U)}	J^{\varpi \otimes_1 \pi_{[t+1:\infty)}}\left( x,\mu_t, \mu_{[t+1:\infty)}\right),
\ee 
where $\varpi \otimes_1\pi_{[t+1:\infty)}$ means applying $\varpi$ at $t$ for one step then back to $\pi$ afterward.
\begin{Remark}\label{rm.explain.def}
	%The stationary equilibrium in Definition \ref{def.stationary.relax} serves as a special case in Definition \ref{def.nonstationary.relax}: both the deterministic equilibrium population flow and the equilibrium policy are constant on $t$. Hence, Proposition \ref{prop.stationary.existence} also provides the existence of equilibria in Definition \ref{def.nonstationary.relax}.

	%Moreover, \eqref{eq.def.nonstationary1} tells that, for a sophisticated single agent, the MFG is not only a game among other agents but also a game between his/her future selves, then if the agent reconsiders his/her strategy at any (future) time $t$, the best choice is still to follow $\pi^*_t$ given all his/her future selves after $t$ and all other agents sticking to $\pi^*$. 
	%Provided the population flow $\mu$ is fixed, the time-inconsistent MFG reduces to a time-inconsistent stochastic control problem for a single agent. And a sophisticated agent shall look for a subgame-perfect Nash equilibrium as a time-consistent strategy such that given all future selves following this plan, there is no reason to deviation from it in the current step.
	Condition B in Definition \ref{def.nonstationary.relax} tells that a deviation from the policy $\pi$ at any moment $t$ will result in a no-better payoff for a single agent. Thus, Conditions A and B in Definition \ref{def.nonstationary.relax} together imply that a single agent shall keep following $\pi$ at any $t$, provided all other agents also follow $\pi$ along the time. This illustrates that $\pi$ is a time-consistent strategy, that is, a subgame perfect Nash equilibrium, from the perspective of a sophisticated agent within the MFG. Also, notice that the flow $\mu$ in the MFG is deterministic and will not be influenced by changes in the strategy of a single agent. 
	\end{Remark}
	
	\begin{Remark}\label{rm.nonstationary}
	When the context of the MFG is time-consistent, that is, 
	\be\label{eq.deltaexp} 
	f(t,x,\nu,u)=\rho^{t}g(x,\nu,u)
	\ee
	for some function $g:\Delta\times U\mapsto \R$ and $0<\rho<1$, \eqref{eq.def.nonstationary1} implies that 
\bee\label{eq:response.1}
J^{\pi_{[t:\infty)}}\left(x, \mu_t, \mu_{[t+1:\infty)}\right) =\sup_{\pi'\in \Pi}	J^{\pi'_{[t:t+k)} \otimes \pi_{[t+k:\infty)}}\left( x,\mu_t, \mu_{[t+1:\infty)} \right)\quad \forall k\in \N,\; \forall (t,x)\in \T\times [d].
\eee
%Indeed, since $(\pi,\mu)$ is  a MFG equilibrium, \eqref{eq:response.1}  holds for $k=1$. Suppose  \eqref{eq:response.1} hold for for $k=n$, and we prove the case for $k=n+1$. Take $\pi'\in \Pi$. Since the reward function $f$ is exponentially discounted, i.e., $f(t,x,\nu,u)=\rho^t g(x,\nu, u)$,  we have for any $(t,x)\in \T\times [d]$ that
%	\begin{align*}
%		&J^{\pi'_{[t:t+n+1)} \otimes \pi_{[t+n+1:\infty)}}\left( x,\mu_t, \mu_{[t+1:\infty)} \right) = J^{\pi'_{[t:t+n)} \otimes \left(  \pi'_{[t+n: t+n+1)}\otimes \pi_{[t+n+1:\infty)} \right)}\left( x,\mu_t, \mu_{[t+1:\infty)} \right) \\
%		=& \E\left[ \sum_{l=0}^{n-1} f^{\pi'_{t+l}(s^{\pi',\mu}_{t+l})}(l, s^{\pi',\mu}_{t+l},\mu_{t+l}) + \rho^{-n} J^{\pi'_{[t+n:t+n+1)} \otimes \pi_{[t+n+1:\infty)}}\left( s^{\pi',\mu}_{t+n}, \mu_{t+n} , \mu_{[t+n+1,\infty)} \right) \right]\\
	%	\leq & \E\left[ \sum_{l=0}^{n-1} f^{\pi'_{t+l}(s^{\pi',\mu}_{t+l})}(l, s^{\pi',\mu}_{t+l},\mu_{t+l}) + \rho^{-n} J^{ \pi_{[t+n:\infty)}}\left( s^{\pi,'\mu}_n, \mu_{t+n}, \mu_{[t+n+1,\infty)} \right) \right]\\
	%	=& J^{\pi'_{[t:t+n)} \otimes \pi_{[t+n:\infty)}}\left( x,\mu_t, \mu_{[t+1:\infty)} \right)\leq J^{\pi_{[t:\infty)}}\left( x,\mu_t, \mu_{[t+1:\infty)} \right),
%	\end{align*}
%	where both the two inequalities follows from the induction hypothesis.	
	As a consequence, 
	%Then by letting $t=0$ and $k\to\infty$, we have
	\be\label{eq.def.nonstationary1'}  
	J^{\pi}\left(x, \mu_0, \mu_{[1:\infty)}\right) =\sup_{\pi'\in \Pi}	J^{\pi'}\left( x,\mu_0, \mu_{[1:\infty)} \right)\quad \forall x\in [d].
	\ee
	Hence, Definition \ref{def.nonstationary.relax} becomes the conventional equilibrium concept in the (time-consistent) MFG literature under the infinite horizon setting, which has been extensively studied; see \cite{guo2019learning, saldi2018markov}, among many others.
	
	%When $\T=\N_0\cap [0,T]$ for some $T\in \N_0$, one can simply extend the fixed point argument from mappings on $[d]\mapsto \Pc(U), \Pc([d])$ to mappings to $[d]\times \T \mapsto\Pc(U), \T\times\Pc([d])$.
\end{Remark}

\begin{Definition}\label{def.stationary.relax}
	Consider a time-independent pair $(\pihat,\muhat)$ with $\pihat=(\pi(x))_{x\in[d]}: [d]\mapsto \Pc(U)$ and $\muhat\in \Pc([d])$. We call $(\pihat,\muhat)$ a stationary equilibrium for the MFG
	if the couple $(\pi, \mu)\in \Pi\times \Lambda$ with $\pi_{[0: \infty)}\equiv\pihat$ and $\mu_{[0:\infty)}\equiv \muhat$, is an equilibrium in Definition \ref{def.nonstationary.relax} for the MFG with initial population distribution $\muhat$.
\end{Definition}

\begin{Remark}\label{rm:equi.stationary}
	%Notice that once $\mu^*_0$ equals to the equilibrium distribution component $\mu^*$ in Condition A and all agents apply the equilibrium policy component $\pi^*$ in Condition B, population flow $\mu_t \equiv \mu^*$. Then in \eqref{eq.def.stationary1}, $J^{\pi^*}\left(x, \mu^*,\mu^*_{[1:\infty)}\right)$ (resp. $\sup_{\varpi\in \Pc(U)}	J^{\varpi \otimes_1\pi^*} \left(x,\mu^*,\mu^*_{[1:\infty)} \right)$) can be written in short as $J^{\pi^*}(x,\mu^*)$ (resp. $\sup_{\varpi\in \Pc(U)}	J^{\varpi \otimes_1\pi^*}(x,\mu^*)$) without indicating $\mu^*_{[1:\infty)}$ which is the tail of the equilibrium flow after $t=0$ .
	Notice that an equilibrium in Definition \ref{def.nonstationary.relax} depends on the initial population $\mu_0=\nu$, which is arbitrarily chosen, while the initial value in a stationary equilibrium in Definition \ref{def.stationary.relax} is a part of the equilibrium. 
	%Notice that, the initial population distribution in a stationary equilibrium $(\pihat,\muhat)$ in Definition \ref{def.stationary.relax} must equal to $\muhat$. 
	
	If $(\pihat, \muhat)$ constitutes a stationary equilibrium as defined in Definition \ref{def.stationary.relax}, then $\muhat= \muhat\cdot P^{\pihat}(\muhat)$. That is, $\muhat$ is an invariant measure for $P^{\pihat}(\muhat)$, and the population flow is constantly equal to $\muhat$, provided that all agents applying $\pihat$ for each step $t$ and the initial population starts with $\muhat$.% Meanwhile, $\pi$ is the optimal reaction to 
	
	%Condition B ensures that, for a single agent, given the population flow $\mu_{[0:\infty)}\equiv \mu^*$ (governed by $\pi^*$) and the future selves following $\pi^*$, the current optimal choice is also $\pi^*$. In a word, given all other agents and also future selves applying $\pi^*$, the current self has no indent to deviate from $\pi^*$, which illustrates the Nash equilibrium concept in the current time-inconsistent setting.
	Similar to Remark \ref{rm.nonstationary}, when \eqref{eq.deltaexp} holds, the stationary equilibrium concept in Definition \ref{def.stationary.relax} aligns with the conventional stationary equilibrium in time-consistent MFGs under the infinite horizon setting. This concept has been widely studied in the related literature, e.g., \cite{guo2023general, adlakha2015equilibria, anahtarci2023q, yardim2023policy}. 
\end{Remark}

%\begin{Remark}\label{rm.stationary}
%	We would like to emphasize that, the policy component in an equilibrium for both Definition \ref{def.nonstationary.relax} and Definition \ref{def.stationary.relax}, as well as in the equilibrium concepts studied in the above literature, only depends on $x$ and $t$ with no population variable $\mu_t$ in it. {\color{black} delete this remark?}
%\end{Remark}

Now we characterize an MFG equilibrium in Definition \ref{def.nonstationary.relax} as a fixed point of the operator $\Phi^\nu$ below. To begin with, for $\pi\in \Pi$ and initial population distribution $\nu\in \Pc([d])$, we further define an auxiliary function $v^{\pi, \nu}(t+1,x): \T\times[d]\mapsto \R$ as
\be\label{eq:def.auxiv} 
v^{\pi, \nu}(t+1,x):=  \E\left[ \sum_{l\geq 1} f^{\pi_{t+l}\left( s^{\pi,\mu^{\pi,\nu}}_{t+l} \right)}\left(l, s_{t+l}^{\pi,\mu^{\pi,\nu}}, \mu_{t+l}^{\pi,\nu} \right)  \bigg | s^{\pi,\mu^{\pi,\nu}}_{t+1}=x\right].
\ee 
Then for any $\varpi\in \Pc(U)$,
\be\label{eq:J.auxiv}   
J^{\varpi \otimes_1 \pi_{[t+1:\infty)}}\left( x, \mu^{\pi,\nu}_t, \mu^{\pi,\nu}_{[t+1:\infty)} \right)=f^{\varpi}\left( 0,x, \mu^{\pi,\nu}_t \right) +P^{\varpi}\left( x, \mu^{\pi,\nu}_t \right) \cdot v^{\pi,\nu}(t+1, \cdot).
\ee 
%We further define, for above $\pi\in \Pi$ and $\nu\in \Pc([d])$, that 
%$$
%v^{\pi, \nu}(t,x):= \E\left[ \sum_{l\in \T}^\infty  f^{\pi_{t+l}(s^{\pi, l}_l)}\left(l, s_l^{\pi,t}, \mu^{\pi, \nu}_{t+l} \right) \right],\quad \text{for each }t\in \T.
%$$
% thus we ignore the flow part in the notation $v^\pi(t,x)$, and $v^\pi(0,x)=$
Drawing from \eqref{eq:J.auxiv} and Condition B in Definition \ref{def.nonstationary.relax}, we define the mapping $\Phi^\nu(\pi): \Pi\mapsto 2^{\Pi}$ as
\be\label{eq.Phi1} 
\begin{aligned}
	\Phi^\nu(\pi):=
	\bigg\{ & \pi'\in \Pi:\supp\left( \pi'_t(x) \right)\\
	& \subset \argmax_{u\in U}\left\{ f\left(0, x, \mu^{\pi,\nu}_t, u \right)+P\left(x,\mu^{\pi, \nu}_t, \cdot, u \right)\cdot v^{\pi, \nu}(t+1, \cdot)\right\}, \forall (t,x)\in \T\times [d] \bigg\}.
\end{aligned}
\ee 
\begin{Remark}\label{rm.Phinonempty}
{\color{black} For any $\pi\in \Pi$, Assumption \ref{assum.bound} together with the continuity of $f, P$ on variable $u$ guarantees that the function $u\mapsto f(0,x,\mu^{\pi,\nu}_t,u)+P(x,\mu^{\pi,\nu}_t,\cdot, u)\cdot v^{\pi,\nu}(t+1,\cdot)$, defined on the compact set $U$, is bounded and continuous for any $(x,\nu)\in \Delta$ and $t\in \T$, meaning the argmax set in \eqref{eq.Phi1} is non-empty and Borel.} Therefore, $\Phi^\nu(\pi)$ is non-empty for any $\pi\in\Pi$.
\end{Remark}

\begin{Proposition}\label{prop.cha.nonstationary}
	Suppose Assumption \ref{assum.bound} holds. Fix $\nu\in \Pc([d])$. Consider $(\pi,\mu)\in \Pi\times \Lambda$ with $\mu_0=\nu$. Then $(\pi,\mu)\in \Pi\times \Lambda$ is an equilibrium in Definition \ref{def.nonstationary.relax} for the MFG 
	with initial population distribution $\nu$ {\color{black} if and only if $\pi$ is a fixed point of $\Phi^\nu$ and $\mu=\mu^{\pi,\nu}$}.
\end{Proposition}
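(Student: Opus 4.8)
The plan is to treat the two conditions of Definition \ref{def.nonstationary.relax} separately. Condition A, together with $\mu_0=\nu$, is by the very recursion \eqref{eq:mu.pi} defining $\mu^{\pi,\nu}$ nothing but the statement $\mu=\mu^{\pi,\nu}$. Hence it suffices to show that, once $\mu=\mu^{\pi,\nu}$, Condition B in \eqref{eq.def.nonstationary1} is equivalent to the support inclusion defining $\Phi^\nu$ in \eqref{eq.Phi1}, i.e.\ to $\pi\in\Phi^\nu(\pi)$.

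To this end I will rewrite both sides of \eqref{eq.def.nonstationary1} in terms of the one-step functional
$$ g^{\pi,\nu}_t(x,u):= f\bigl(0,x,\mu^{\pi,\nu}_t,u\bigr)+P\bigl(x,\mu^{\pi,\nu}_t,\cdot,u\bigr)\cdot v^{\pi,\nu}(t+1,\cdot), \qquad (t,x)\in\T\times[d],\ u\in U, $$
which by Remark \ref{rm.Phinonempty} (hence by Assumption \ref{assum.bound} and the continuity of $f,P$ in $u$) is bounded and continuous in $u$ with non-empty Borel set of maximizers. For the right-hand side of \eqref{eq.def.nonstationary1}, the supremum over $\pi'\in\Pi$ equals the supremum over $\varpi\in\Pc(U)$ as recorded in \eqref{eq:mfg.varpi} (only the value $\pi'_t(x)$ matters along trajectories issued from $x$ at time $t$, and the condition is imposed state by state); combining this with the decomposition \eqref{eq:J.auxiv} and the definitions of $f^\varpi,P^\varpi$ in \eqref{eq.P} yields
$$ \sup_{\pi'\in\Pi} J^{\pi'_{[t:t+1)}\otimes\pi_{[t+1:\infty)}}\bigl(x,\mu_t,\mu_{[t+1:\infty)}\bigr) =\sup_{\varpi\in\Pc(U)}\int_U g^{\pi,\nu}_t(x,u)\,\varpi(du)=\max_{u\in U} g^{\pi,\nu}_t(x,u), $$
the last equality because the integrand is linear in $\varpi$ and the maximum of $g^{\pi,\nu}_t(x,\cdot)$ is attained. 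For the left-hand side, applying \eqref{eq:J.auxiv} with $\varpi=\pi_t(x)$ — noting that $\pi_t(x)\otimes_1\pi_{[t+1:\infty)}$ coincides with $\pi_{[t:\infty)}$ along trajectories started at $x$ at time $t$ — and using $\mu=\mu^{\pi,\nu}$ gives $J^{\pi_{[t:\infty)}}\bigl(x,\mu_t,\mu_{[t+1:\infty)}\bigr)=\int_U g^{\pi,\nu}_t(x,u)\,\pi_t(x)(du)$.

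Thus, under $\mu=\mu^{\pi,\nu}$, Condition B says exactly that $\int_U g^{\pi,\nu}_t(x,u)\,\pi_t(x)(du)=\max_{u\in U}g^{\pi,\nu}_t(x,u)$ for every $(t,x)$. The remaining step is the elementary observation that, for a bounded continuous $g$ on $U$ and $\varpi\in\Pc(U)$, one has $\int_U g\,d\varpi=\max_U g$ if and only if $\supp(\varpi)\subset\argmax_U g$: the ``if'' direction is immediate since $g$ equals its maximum $\varpi$-almost everywhere, while the ``only if'' direction follows because $\{g=\max_U g\}$ is closed (by continuity) and carries full $\varpi$-mass, hence contains $\supp(\varpi)$. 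Applying this with $g=g^{\pi,\nu}_t(x,\cdot)$ and $\varpi=\pi_t(x)$ for each $(t,x)$ turns Condition B into precisely the defining inclusion of $\Phi^\nu$ in \eqref{eq.Phi1}, i.e.\ $\pi\in\Phi^\nu(\pi)$, which is the asserted equivalence.

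The only points needing real care are bookkeeping: verifying that $v^{\pi,\nu}$ and the infinite-horizon sums in \eqref{eq.def.J}--\eqref{eq:def.auxiv} are well defined and that expectation exchanges with the infinite sum (both guaranteed by the summability in Assumption \ref{assum.bound}, via dominated convergence), and checking that $J^{\pi_{[t:\infty)}}$ evaluated along trajectories started at $(x,t)$ genuinely depends on $\pi$ only through $\pi_t(x)$ and $\pi_{[t+1:\infty)}$, so that \eqref{eq:J.auxiv} applies to it verbatim. Once these are in place, the argument reduces to the linearity-in-$\varpi$ step and the support characterization above, both routine.
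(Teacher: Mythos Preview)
Your proposal is correct and follows essentially the same approach as the paper: both identify Condition A with $\mu=\mu^{\pi,\nu}$ and then use the one-step decomposition \eqref{eq:J.auxiv} to reduce Condition B to the support inclusion defining $\Phi^\nu(\pi)$. You spell out the elementary equivalence $\int_U g\,d\varpi=\max_U g \Leftrightarrow \supp(\varpi)\subset\argmax_U g$ a bit more explicitly than the paper does, but the argument is otherwise identical.
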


\begin{proof}
	Suppose $(\pi,\mu)\in \Pi\times \Lambda$ is an equilibrium in Definition \ref{def.nonstationary.relax} for the MFG with initial population $\mu_0=\nu$. %Fix $\nu \in \Pc([d])$ and set $\mu^{\pi}_0=\nu$. 
	Condition A in Definition \ref{def.nonstationary.relax} tells that
	$$
	\mu_t=\mu^{\pi,\nu}_t \quad \forall t\in \T.
	$$
	This together with \eqref{eq:J.auxiv} and Condition B in Definition \ref{def.nonstationary.relax} provides, for all $(t,x)\in \T\times [d]$ that
	$$
	\pi_t(x)\in \argmax_{\varpi\in \Pc(U)} \left\{  \int_U \left[ f \left( t,x,\mu^{\pi,\nu}_t, u \right) + P \left( x,\mu^{\pi,\nu}_t,\cdot ,u \right)\cdot v^{\pi, \nu}(t+1,\cdot) \right] \varpi(du)\right\},
	$$
	which is equivalent to 
	$$
	\supp(\pi_t(x))\subset  \argmax_{u\in U} \left\{ f \left( 0,x,\mu^{\pi,\nu}_t, u \right) + P \left( x,\mu^{\pi,\nu}_t,\cdot ,u \right)\cdot v^{\pi, \nu}(t+1,\cdot) \right\}\;\quad \forall (t,x)\in \T\times [d].
	$$
	Hence, $\pi$ is a fixed point of $\Phi^\nu$.
	
	{\color{black}Conversely, take $(\pi,\mu)\in \Pi\times \Lambda$ such that $\pi$ is a fixed point of $\Phi^\nu$ and $\mu=\mu^{\pi,\nu}$. %consider the flow $\mu^{\pi,\nu}\in \Lambda$.
	 Then $\left( \pi,\mu \right)$ automatically satisfies Condition A in Definition \ref{def.nonstationary.relax} with initial population distribution equal to $\nu$.} Moreover, \eqref{eq:J.auxiv} and \eqref{eq.Phi1} together imply, for any $(t,x)\in \T\times [d]$ that
	%$$
	%\supp( \pi(t,x))\subset \argmax_{u\in U}\left\{ f(0, x, \mu^{\pi,\nu}_t, u)+P\left( x,\mu^{\pi,\nu}_t, \cdot, u \right)\cdot J^{\pi_{[t:\infty)}}\left( \cdot, \mu_t, \mu_{[t+1:\infty)} \right) \right\},
	%$$
	%which is equivalent to
	$$
	J^{\pi_{[t:\infty)}}\left( x,\mu^{\pi,\nu}_t, \mu^{\pi,\nu}_{[t+1:\infty)}\right) =\sup_{\varpi\in \Pc(U)} J^{\varpi\otimes_1 \pi_{[t+1:\infty)}}\left( x,\mu^{\pi,\nu}_t, \mu^{\pi,\nu}_{[t+1:\infty)} \right)\quad \forall (t,x)\in \T\times [d],
	$$
	thus Condition B in Definition \ref{def.nonstationary.relax} holds. Hence, $(\pi, \mu)$ is an equilibrium in Definition \ref{def.nonstationary.relax} for the MFG with initial population distribution $\nu$.
\end{proof}

\begin{Assumption}\label{assum:continuous}
	For each $t\in \T$ and $x,y\in  [d]$, the functions $(\mu, u)\mapsto f(t,x,\mu, u), P(x,\mu,y,u)$ are continuous on $\Pc([d])\times U$.
\end{Assumption}

Since $\Pc([d])\times U$ is compact, Assumption \ref{assum:continuous} implies uniform continuity on $ \Pc([d])\times U$. That is, given any fixed $t, x,y$, for each $\eps>0$, there exits $\delta>0$ such that 
\be\label{eq:fP.uniformcts}  
|f(t,x,\nu,u)-f(t,x,\bar\nu, \bar u)|+|P(x, \nu,y, u)-P(x, \bar\nu, y, \bar u)|<\eps, \; \text{whenever } \left| (\nu, u)- (\bar\nu, \bar u) \right|<\delta
\ee 
Our first result concerns the existence of an MFG equilibrium as defined in Definition \ref{def.nonstationary.relax}.

\begin{Theorem}\label{thm.exist.nonstatoinary}
	Suppose Assumptions \ref{assum.bound}, \ref{assum:continuous} hold. Then for any $\nu \in \Pc([d])$, $\Phi^\nu(\pi)$ has a fixed point. {\color{black}As a consequence,} an equilibrium in Definition \ref{def.nonstationary.relax} for the MFG with initial population $\nu$ exists.
\end{Theorem}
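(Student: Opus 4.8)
The plan is to realize $\Phi^\nu$ as a set-valued map on a compact convex domain to which the Kakutani--Fan--Glicksberg fixed point theorem applies, and then invoke Proposition \ref{prop.cha.nonstationary} to pass from a fixed point of $\Phi^\nu$ to an equilibrium in Definition \ref{def.nonstationary.relax}. First I would fix the ambient space: $\Pi = \prod_{(t,x)\in\T\times[d]}\Pc(U)$ endowed with the product topology. Since $U$ is compact, each factor $\Pc(U)$ is convex and $\Wc_1$-compact (and $\Wc_1$ metrizes the weak topology there), so by Tychonoff $\Pi$ is compact, convex, and metrizable, and it sits as a compact convex subset of the locally convex Hausdorff space $\prod_{(t,x)}\Mc(U)$ of signed-measure families with the weak-$*$ topology. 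Metrizability lets me argue with sequences throughout.

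Next I would check the values of $\Phi^\nu$. For fixed $\pi$, set $A_{t,x}(\pi):=\argmax_{u\in U}\{f(0,x,\mu^{\pi,\nu}_t,u)+P(x,\mu^{\pi,\nu}_t,\cdot,u)\cdot v^{\pi,\nu}(t+1,\cdot)\}$, which by Remark \ref{rm.Phinonempty} is a nonempty closed subset of $U$. Then $\Phi^\nu(\pi)=\prod_{(t,x)}\{\varpi\in\Pc(U):\supp(\varpi)\subset A_{t,x}(\pi)\}$, a product of nonempty closed convex subsets of $\Pc(U)$, hence nonempty, convex, and closed in $\Pi$.

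The heart of the argument is the closed-graph property: if $\pi^n\to\pi$ in $\Pi$ and $\tilde\pi^n\in\Phi^\nu(\pi^n)$ with $\tilde\pi^n\to\tilde\pi$, then $\tilde\pi\in\Phi^\nu(\pi)$. I would proceed in three steps. (i) By induction on $t$ using the recursion \eqref{eq:mu.pi}, the joint continuity and boundedness of $P$ in $(\mu,u)$ from Assumption \ref{assum:continuous}, and weak convergence $\pi^n_t(x)\to\pi_t(x)$, the map $\pi\mapsto\mu^{\pi,\nu}_t$ is continuous for each $t$. (ii) The auxiliary value $v^{\pi,\nu}(t+1,\cdot)$ of \eqref{eq:def.auxiv} is continuous in $\pi$: each truncation $\sum_{l=1}^{L}(\cdots)$ depends on finitely many coordinates of $\pi$ through compositions of continuous bounded kernels and rewards, hence is continuous, while Assumption \ref{assum.bound} bounds the tail uniformly by $\sum_{l>L}\sup_{x,\nu,u}|f(l,x,\nu,u)|\to0$, exhibiting $v^{\pi,\nu}(t+1,\cdot)$ as a uniform limit of continuous functions. (iii) Writing $g^\pi_{t,x}(u):=f(0,x,\mu^{\pi,\nu}_t,u)+P(x,\mu^{\pi,\nu}_t,\cdot,u)\cdot v^{\pi,\nu}(t+1,\cdot)$, combining (i)--(ii) with the uniform continuity \eqref{eq:fP.uniformcts} and the uniform boundedness of $v$ gives $\sup_{u\in U}|g^{\pi^n}_{t,x}(u)-g^\pi_{t,x}(u)|\to0$, hence $M^n_{t,x}:=\max_U g^{\pi^n}_{t,x}\to M_{t,x}:=\max_U g^\pi_{t,x}$. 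Since $\supp(\tilde\pi^n_t(x))\subset A_{t,x}(\pi^n)$ we have $\int_U g^{\pi^n}_{t,x}\,d\tilde\pi^n_t(x)=M^n_{t,x}$; passing to the limit (uniformly convergent integrands tested against weakly convergent probability measures) yields $\int_U g^\pi_{t,x}\,d\tilde\pi_t(x)=M_{t,x}$, which, because $g^\pi_{t,x}\le M_{t,x}$ on $U$, forces $\tilde\pi_t(x)$ to be carried by the closed set $\{g^\pi_{t,x}=M_{t,x}\}=A_{t,x}(\pi)$. Thus $\tilde\pi\in\Phi^\nu(\pi)$.

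Having established that $\Phi^\nu$ is a nonempty-, convex-, closed-valued correspondence with closed graph on the compact convex set $\Pi$ in a locally convex space, Kakutani--Fan--Glicksberg yields a fixed point $\pi^*\in\Phi^\nu(\pi^*)$, and then Proposition \ref{prop.cha.nonstationary} shows $(\pi^*,\mu^{\pi^*,\nu})$ is an equilibrium in Definition \ref{def.nonstationary.relax} with initial population $\nu$. The main obstacle I anticipate is step (ii): $v^{\pi,\nu}$ depends on the entire tail $(\pi_{t+l})_{l\ge1}$, so its continuity in the rather weak product topology cannot come from finitely many coordinates alone and must be squeezed out of the summability in Assumption \ref{assum.bound}, which upgrades finite-horizon continuity to a uniform-limit statement; the accompanying support-propagation argument in step (iii), turning uniform convergence of the objectives into the inclusion $\supp(\tilde\pi_t(x))\subset A_{t,x}(\pi)$, is the other delicate point.
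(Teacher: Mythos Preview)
Your proposal is correct and follows essentially the same route as the paper: set up $\Pi$ as a compact convex subset of a locally convex space, prove continuity of $\pi\mapsto\mu^{\pi,\nu}_t$ and $\pi\mapsto v^{\pi,\nu}(t+1,\cdot)$ (the latter via truncation and the tail bound from Assumption~\ref{assum.bound}), deduce that $\Phi^\nu$ has closed graph, apply Kakutani--Fan--Glicksberg, and invoke Proposition~\ref{prop.cha.nonstationary}. The one place you diverge from the paper is step~(iii): the paper argues geometrically, showing that any ball whose closure misses the limiting argmax set $A_{t,x}(\pi)$ must eventually miss $A_{t,x}(\pi^n)$ too, and then uses bump $C$-functions to conclude $\tilde\pi_t(x)$ gives it zero mass; your integral argument---$\int g^{\pi^n}_{t,x}\,d\tilde\pi^n_t(x)=M^n_{t,x}\to M_{t,x}=\int g^\pi_{t,x}\,d\tilde\pi_t(x)$, hence the nonnegative continuous function $M_{t,x}-g^\pi_{t,x}$ has zero $\tilde\pi_t(x)$-integral---is a cleaner and more direct way to reach the same support inclusion.
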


The proof for Theorem \ref{thm.exist.nonstatoinary} is carried out in Section \ref{subsec:proof.nonstationary}.

\subsection{Proof for Theorem \ref{thm.exist.nonstatoinary}}\label{subsec:proof.nonstationary}

%The plan of the proof is the following. We will first characterize an equilibrium in Definition \ref{def.nonstationary.relax} as a fixed point of an operator $\Phi^\nu$, which is defined below and is a mapping from a compact convex set to the power set of itself. 

We first define notations that will be used during the arguments within this subsection. Recall that $\Pc(U)$ is endowed with the weak topology. Denote by the measure $m(t,x)$ on $\T\times [d]$ as the product of discrete measure $p$ on $\T$ such that $p({t})=2^{-t-1}$ and  the uniform distribution measure on $[d]$. Define $\|h\|_{L^1(\T\times [d])}:= \int_{\T\times [d]}h(t,x)dm(t,x)$ and $\|h\|_{L^\infty(\T\times [d])}:= \sup_{(t,x)\in \T\times [d]}|h(t,x)|$ when the norms are finite. We further endow $L^\infty(\T\times [d])$ with the weak star topology. Then, $h^n\to^* h$ in $L^\infty(\T\times [d])$, that is, $h^n$ converges to $h$ in the weak star topology in $L^\infty(\T\times [d])$, if and only if $\int_{\T\times [d]} h^n(t,x)g(t,x)dm(t,x)\to \int_{\T\times [d]} h(t,x) g(t,x) dm(t,x)$ for all $g\in L^1(\T\times [d])$.

A Carath\'eodory function ($C$-function) $c(t,x,u)$ can be defined as a function that is Borel on $T\times [d]$ such that $u\mapsto c(t,x,u)$ is continuous for each $(t,x)\in \T\times[d]$ and 
\bee  
\int_{T\times [d]} \sup_{u\in U}|c(t,x,u)| dm(t,x)<\infty.
\eee 
%{\color{black}Elements in $\Pi$ are called Young measures. Since $U$ is compact, $\Pi$ endowed with the weak topology is a compact convex subset of a locally convex linear topological space, for details see \cite{,}.} 
Since $U$ is compact, $\Pi$ endowed with the weak topology is a compact convex subset of a locally convex linear topological space. Moreover, for $(\pi^n)_{n\in \N}\subset\Pi$ and $\pi\in \Pi$, $\pi^n\to ^* \pi$ if and only if for any $C$-function $c(t,x,u)$, 
$$
\lim_{n\to\infty} \int_{\T\times [d]}\left( \int_U c(t,x,u) \pi^n_t(x)(du)\right) dm(t,x) \to   \int_{\T\times[d]}\left( \int_U c(t,x,u) \pi_t(x)(du)\right) dm(t,x).
$$

Next, we carry out arguments to show that $\Phi^\nu$ has a closed graph. Then a direct application of the Kakutani-Fan fixed point theorem (see \cite[Corollary 17.55]{guide2006infinite}) establishes the existence of a fixed point for $\Phi^\nu$.

%We may use a similar argument in \cite{saldi2018markov} (notice here the state space is finite so the assumptions are simpler than \cite{saldi2018markov}) or Young measure, or other method to make sure Kakutani-Fan fixed point theorem works here, still need to check the detail, maybe it is not that obvious.

%Recall \eqref{eq:def.auxiv} and \eqref{eq:J.auxiv}.
%Recall \ref{eq.Phi0}. The following Lemma provides a convergence result 
\begin{Lemma}\label{lm.vmu.converge}
	Suppose Assumptions \ref{assum.bound}, \ref{assum:continuous} hold. Let $(\pi^n)_{n\in \N}\subset\Pi$ such that $\pi^n\to ^* \pi^\infty$ in $\Pi$. Take $\nu\in \Pc([d])$, then
	\begin{align}
		&\lim_{n\to\infty}\mu^{\pi^n,\nu}_t= \mu^{\pi^\infty,\nu}_t\quad \forall t\in \T, \label{eq:lmvmu0}\\
		&\lim_{n\to\infty}v^{\pi^n, \nu}(t+1,x) = v^{\pi^\infty, \nu}(t+1,x)\quad  \forall t\in \T, x\in [d]. \label{eq:lmvmu1}
	\end{align}
\end{Lemma}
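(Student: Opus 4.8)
The plan is to reduce the weak-star convergence $\pi^n\to^*\pi^\infty$ in $\Pi$ to pointwise narrow convergence of the relaxed controls, and then to push this through the finite recursion \eqref{eq:mu.pi} for the population flow and through the absolutely convergent series defining the auxiliary value in \eqref{eq:def.auxiv}. First I would record the key reduction: for a fixed $(t_0,x_0)\in\T\times[d]$ and $h\in C(U)$, the function $c(t,x,u)$ equal to $h(u)$ when $(t,x)=(t_0,x_0)$ and $0$ otherwise is a $C$-function, since $m$ charges the singleton $\{(t_0,x_0)\}$ with positive mass $2^{-t_0-1}/d$; testing $\pi^n\to^*\pi^\infty$ against this $c$ and dividing by $m(\{(t_0,x_0)\})$ yields $\int_U h\,d\pi^n_{t_0}(x_0)\to\int_U h\,d\pi^\infty_{t_0}(x_0)$. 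As $U$ is compact and $h$ arbitrary, $\pi^n_t(x)\to\pi^\infty_t(x)$ weakly in $\Pc(U)$ for every $(t,x)\in\T\times[d]$.

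Next I would isolate an elementary joint-continuity fact: if $\nu^n\to\nu^\infty$ in $\Pc([d])$, $\varpi^n\to\varpi^\infty$ weakly in $\Pc(U)$, and $g:\Pc([d])\times U\to\R$ is continuous, then $\int_U g(\nu^n,u)\,\varpi^n(du)\to\int_U g(\nu^\infty,u)\,\varpi^\infty(du)$; indeed, splitting off $\int_U[g(\nu^n,u)-g(\nu^\infty,u)]\,\varpi^n(du)$, this term is bounded by $\sup_{u\in U}|g(\nu^n,u)-g(\nu^\infty,u)|\to0$ by uniform continuity on the compact set $\Pc([d])\times U$ (cf. \eqref{eq:fP.uniformcts}), while the remaining term goes to $0$ by weak convergence of $\varpi^n$. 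With this, I would prove \eqref{eq:lmvmu0} by induction on $t$: the base case is $\mu^{\pi^n,\nu}_0=\nu=\mu^{\pi^\infty,\nu}_0$, and for the step, writing $\mu^{\pi^n,\nu}_{t+1}(y)=\sum_{x\in[d]}\mu^{\pi^n,\nu}_t(x)\int_U P(x,\mu^{\pi^n,\nu}_t,y,u)\,\pi^n_t(x)(du)$ from \eqref{eq:mu.pi}–\eqref{eq.P}, the inductive hypothesis, the pointwise convergence $\pi^n_t(x)\to\pi^\infty_t(x)$, and the continuity fact with $g=P(x,\cdot,y,\cdot)$ (continuous by Assumption \ref{assum:continuous}) give the convergence of each of the finitely many summands. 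The same computation shows that for every $s\in\T$ the matrix $P^{\pi^n_s}(\mu^{\pi^n,\nu}_s)$ converges entrywise to $P^{\pi^\infty_s}(\mu^{\pi^\infty,\nu}_s)$, hence so does any fixed finite product of such matrices.

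For \eqref{eq:lmvmu1} I would expand \eqref{eq:def.auxiv} as an absolutely convergent series. Writing $Q^{\pi,\nu}_{t+1,s}$ for the $s$-step transition matrix from time $t+1$ under $(\pi,\mu^{\pi,\nu})$ — so $Q^{\pi,\nu}_{t+1,0}=I$ and $Q^{\pi,\nu}_{t+1,s+1}=Q^{\pi,\nu}_{t+1,s}\cdot P^{\pi_{t+1+s}}(\mu^{\pi,\nu}_{t+1+s})$ — Tonelli's theorem (licensed by Assumption \ref{assum.bound}) gives $v^{\pi,\nu}(t+1,x)=\sum_{l\geq1}\sum_{y\in[d]}(Q^{\pi,\nu}_{t+1,l-1})_{x,y}\,f^{\pi_{t+l}(y)}(l,y,\mu^{\pi,\nu}_{t+l})$. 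By Assumption \ref{assum.bound} every summand is bounded in absolute value by $F_l:=\sup_{(x,\nu)\in\Delta,u\in U}|f(l,x,\nu,u)|$ with $\sum_lF_l<\infty$, so the tail $\sum_{l\geq L}$ of the series is at most $d\sum_{l\geq L}F_l$ for both $\pi=\pi^n$ and $\pi=\pi^\infty$, uniformly in $n$; given $\eps>0$ I would fix $L$ making this $<\eps/3$. On the remaining range $1\leq l\leq L$, the entrywise convergence $Q^{\pi^n,\nu}_{t+1,l-1}\to Q^{\pi^\infty,\nu}_{t+1,l-1}$ comes from the previous paragraph, while $f^{\pi^n_{t+l}(y)}(l,y,\mu^{\pi^n,\nu}_{t+l})\to f^{\pi^\infty_{t+l}(y)}(l,y,\mu^{\pi^\infty,\nu}_{t+l})$ follows from \eqref{eq:lmvmu0}, the pointwise convergence of $\pi^n_{t+l}(y)$, and the continuity fact with $g=f(l,y,\cdot,\cdot)$; an $\eps/3$ argument then closes the proof.

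The one genuinely non-formal point is the reduction carried out at the start: the topology on $\Pi$ is defined by testing against $C$-functions integrated over all of $\T\times[d]$ against $m$, i.e. an averaged weak-star convergence, rather than a pointwise-in-$(t,x)$ one, so one must verify it nonetheless forces narrow convergence $\pi^n_t(x)\to\pi^\infty_t(x)$ at each single $(t,x)$ — which works precisely because $[d]$ is finite and $m$ charges every singleton of $\T\times[d]$. Everything afterward is a routine induction together with a tail/dominated-convergence argument, with Assumption \ref{assum:continuous} entering only through the joint-continuity fact.
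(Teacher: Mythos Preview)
Your proof is correct and follows essentially the same approach as the paper: induction on $t$ for the population flow using uniform continuity of $P$ and weak convergence of the relaxed controls, then tail truncation via Assumption~\ref{assum.bound} plus convergence of finitely many terms for the value function. The only cosmetic differences are that you front-load the reduction from weak-$*$ convergence to pointwise narrow convergence (the paper constructs the relevant $C$-function inline at \eqref{eq.vmuconverge.3}) and you expand $v^{\pi,\nu}$ via explicit transition-matrix products rather than the paper's time-inhomogeneous Markov chain $(z^n_l)$, but these are equivalent formulations of the same computation.
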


\begin{proof}
	Take $\nu\in \Pc([d])$. During the proof, we will write $\mu^n$ (resp. $\mu^n_t$) short for $\mu^{\pi^n,\nu}$ for all $n\in \N\cup\{\infty\}$ (resp. $\mu^{\pi^n, \nu}_t$ for all $t\in \T$ and all $n\in \N\cup\{\infty\}$).
	
%	We first prove that 
%	\be\label{eq:lmvmufP} 
%	\lim_{n\to\infty} \sup_{(t,x)\in \T\times [d]} f^{\pi^n(x)}(t,x,\nu)
%	\ee 
	
	We first prove \eqref{eq:lmvmu0} for all $t\in \T$ by induction.
	Notice that $\mu^{n}_0= \nu =\mu^{\infty}_0$. Now suppose \eqref{eq:lmvmu0} holds for $t$. 
	Then a direct calculation gives that 
	\be\label{eq.vmuconverge.1} 
	\begin{aligned}
		&\left| \mu^{n}_{t+1}-\mu^{\infty}_{t+1}\right|\\
		&\leq  \left| \left(\mu^{n}_{t}-\mu^{\infty}_{t}  \right)\cdot P^{\pi^n_t} \left( \mu^{n}_{t}\right) \right|
		+ \left|\mu^{\infty}_{t}\cdot P^{\pi^n_t} \left( \mu^{n}_{t}\right)- \mu^{\infty}_{t}\cdot P^{\pi^\infty_t} \left(\mu^{\infty}_{t}\right) \right|\\
		&\leq  \sqrt{d} \left| \mu^{n}_{t}-\mu^{\infty}_{t} \right|
		+ \sqrt{d} \sup_{x\in  [d]}\int_U \left| P\left( x,\mu^{n}_{t},u \right)- P \left( x,\mu^{\infty}_{t},u \right)\right|  \pi^n_t(x)(du) \\
		&\quad + \sqrt{d}\sup_{x\in [d]}\left| \int_U P\left( x,\mu^{\infty}_{t},u \right) \pi^n_t(x)(du)- \int_U P\left( x,\mu^{\infty}_{t},u \right) \pi^\infty_t(x)(du) \right|
	\end{aligned}
	\ee 
	The first term on the third line of \eqref{eq.vmuconverge.1} converges to zero based on the induction assumption. \eqref{eq:fP.uniformcts} together with the finiteness of $[d]$ tells that the second term on the third line of \eqref{eq.vmuconverge.1} also converges to zero.
	As for the third term in last line of \eqref{eq.vmuconverge.1}, for arbitrarily fixed $x_0,y_0\in [d]$, we construct a $C$-function as
	\be\label{eq.vmuconverge.0} 
	c(l,x,u):=d 2^{t+1}P\left( x_0,\mu^{\infty}_{t}, y_0, u \right), \text{ when }x=x_0 \text{ and } l= t;\quad  c(l,x,u):=0, \text{ otherwise }.
	\ee 
	Then $\pi^n\to^*\pi^\infty$ implies  that 
	\be\label{eq.vmuconverge.3}  
	\begin{aligned}
		&\lim_{n\to\infty} P^{\pi^n_t(x_0)} \left( x_0,\mu^{\infty}_{t},y_0 \right)=\lim_{n\to\infty} \int_{\T\times [d]}\left( \int_U c(l,x,u) \pi^n_l(x)(du)\right) dm(l,x)\\
		&= \int_{\T\times[d]}\left( \int_U c(l,x,u) \pi^\infty_l(x)(du)\right) dm(l,x)=P^{\pi^\infty_t(x_0)}\left(x_0,\mu^{\infty}_{t},y_0 \right).
	\end{aligned}
	\ee 
	Since $[d]$ is finite, \eqref{eq.vmuconverge.3} shows the convergence to zero of the last term in \eqref{eq.vmuconverge.1}. As a consequence, \eqref{eq:lmvmu0} holds.
	
	Now we prove \eqref{eq:lmvmu1}. By Assumption \ref{assum.bound}, for any $\eps>0$, there exists $T$ such that 
	\begin{align}
		&\left| v^{\pi^n, \nu}(t+1,x) - v^{\pi^\infty, \nu}(t+1,x) \right| \notag\\
		&\leq \Vast| \E \left[ \sum_{1\leq l\leq T} f^{\pi^n_{t+l} \left( s^{\pi^n, \mu^{n}}_{t+l}\right)} \left( l,s^{\pi^n, \mu^{n}}_{t+l},\mu^{n}_{t+l} \right) \bigg| s^{\pi^n, \mu^{n}}_{t+1} =x \right] \label{eq.vmuconverge.5}\\
		&\qquad -\E \left[\sum_{1\leq l\leq T} f^{\pi^\infty_{t+l}\left(s^{\pi^\infty, \mu^{\infty}}_{t+l}\right)}(l,s^{\pi^\infty, \mu^\infty}_{t+l},\mu^{\infty}_{t+l}) \bigg| s^{\pi^\infty, \mu^{\infty}}_{t+1} =x \right]  \Vast|+\eps. \notag
	\end{align}
	Notice that the expectations in above formula are equal to 
	\be\label{eq.vmuconverge.7} 
	\E\left[ \sum_{1 \leq l\leq T} g^{n}(l,z^{n}_l,\mu^{n}_{t+l}) \right],\quad \E \left[ \sum_{1\leq l\leq T} g^{\infty}(l,z^{\infty}_l,\mu^{\infty}_{t+l}) \right] \text{ respectively},
	\ee 
	where, for each $n\in \N\cup\{\infty\}$, $g^n(l,y,\nu):= f^{\pi^n_{t+l} (y)}(l, y,\nu)$ for $1\leq l\leq T$ and $(y,\nu)\in \Delta$, and $(z^n_l)_{l\geq 1}$ represents the time-inhomogeneous Markov chain with $z^n_1=x$ and transition matrix at step $l$ as
	$$
Q^n_l:= \left(P^{\pi^n_{t+l}(y)} \left( y,\mu^{n}_{t+l},z \right)\right)_{y,z\in[d]}, \text{ for } 1\leq l\leq T.
	$$
	By \eqref{eq:lmvmu0} and $\pi^n\to^*\pi^\infty$, we have $z^n_l$ converges in distribution to $z^\infty_l$ for all $1\leq l\leq T$. For each $y\in[d], 1\leq l\leq T$,
	\begin{align*}
		&\left| g^n\left( l,y,\mu^{n}_{t+l} \right) - g^\infty \left( l,y,\mu^{\infty}_{t+l} \right)\right|\\
		&\leq \left| f^{\pi^n_{t+l}(y)} \left( l,y,\mu^{n}_{t+l} \right) - f^{\pi^n_{t+l}(y)} \left( l,y,\mu^{\infty}_{t+l} \right)\right|
		 +\left| f^{\pi^n_{t+l}(y)} \left( l,y,\mu^{\infty}_{t+l} \right) - f^{\pi^\infty_{t+l}(y)} \left( l,y,\mu^{\infty}_{t+l} \right)\right|.
	\end{align*}
	Then \eqref{eq:fP.uniformcts} tells the first term on the right-hand side tends to zero as $n\to\infty$. By a similar argument as in \eqref{eq.vmuconverge.3} using a proper $C$-function, the second term on the right-hand side above converges to zero as well. Then we reach to
	\begin{align*}
		&\sup_{1\leq l\leq T, x\in [d]}\left| g^n\left( l,x,\mu^{n}_{t+l} \right) - g^\infty \left( l,x,\mu^{\infty}_{t+l} \right)\right|%\\
		%&=\sup_{0\leq l\leq T, x\in [d]}\left| f^{\pi^n_{t+l}(x)} \left( l+1,x,\mu^{n}_{t+l} \right) - f^{\pi^\infty_{t+l}(x)} \left( l+1,x,\mu^{\infty}_{t+l} \right)\right|
		\to0, \; \text{as }n\to\infty.
	\end{align*}
	Therefore, the first expectation term converges to the second expectation term in \eqref{eq.vmuconverge.7}. As a consequence, the right-hand side of \eqref{eq.vmuconverge.5} less or equal to $\eps$ as $n\to\infty$. Then \eqref{eq:lmvmu1} follows from the arbitrariness of $\eps$.
\end{proof}

\begin{Lemma}\label{lm.closedgraph}
	Suppose Assumptions \ref{assum.bound}, \ref{assum:continuous} hold. Take $\nu\in \Pc([d])$. Then $\Phi^\nu:\Pi\mapsto 2^{\Pi}$ defined in \eqref{eq.Phi1} has a closed graph.
\end{Lemma}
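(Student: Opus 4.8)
The plan is to check the closed‑graph property directly. I would take sequences $\pi^n\to^*\pi^\infty$ and $\bar\pi^n\to^*\bar\pi^\infty$ in $\Pi$ with $\bar\pi^n\in\Phi^\nu(\pi^n)$ for every $n$ (the weak star topology on the compact set $\Pi$ is metrizable, so sequences suffice) and show $\bar\pi^\infty\in\Phi^\nu(\pi^\infty)$. It is convenient to abbreviate
\[
H^\pi(t,x,u):=f\!\left(0,x,\mu^{\pi,\nu}_t,u\right)+P\!\left(x,\mu^{\pi,\nu}_t,\cdot,u\right)\cdot v^{\pi,\nu}(t+1,\cdot).
\]
By \eqref{eq.Phi1}, $\bar\pi\in\Phi^\nu(\pi)$ means $\supp(\bar\pi_t(x))\subset\argmax_{u\in U}H^\pi(t,x,u)$ for all $(t,x)\in\T\times[d]$; since $H^\pi(t,x,\cdot)$ is continuous on the compact set $U$, this is equivalent to $\int_U H^\pi(t,x,u)\,\bar\pi_t(x)(du)=\max_{u\in U}H^\pi(t,x,u)$ (one direction is immediate, the other because the continuous nonnegative function $\max_{u}H^\pi(t,x,u)-H^\pi(t,x,\cdot)$ then integrates to zero against $\bar\pi_t(x)$, hence vanishes on its support). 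I would also record that weak star convergence in $\Pi$ localizes: since the reference measure $m$ gives every singleton $\{(t,x)\}$ the positive mass $2^{-t-1}/d$, testing $\bar\pi^n\to^*\bar\pi^\infty$ against the $C$-functions $(l,y,u)\mapsto\phi(u)\,\mathbf{1}_{\{(l,y)=(t,x)\}}$ with $\phi\in C(U)$ yields $\bar\pi^n_t(x)\to\bar\pi^\infty_t(x)$ weakly in $\Pc(U)$ for each fixed $(t,x)$; the same holds for $\pi^n$.

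The core step is to prove that, for each fixed $(t,x)$, $H^{\pi^n}(t,x,\cdot)\to H^{\pi^\infty}(t,x,\cdot)$ uniformly on $U$. Here I would use Lemma \ref{lm.vmu.converge} to get $\mu^{\pi^n,\nu}_t\to\mu^{\pi^\infty,\nu}_t$ and $v^{\pi^n,\nu}(t+1,\cdot)\to v^{\pi^\infty,\nu}(t+1,\cdot)$, and combine this with: the uniform continuity \eqref{eq:fP.uniformcts} of $f$ and $P$ in $(\nu,u)$; the uniform bound $\sup_n\sup_y|v^{\pi^n,\nu}(t+1,y)|\le\sum_{l\ge1}\sup_{(x,\nu)\in\Delta,\,u\in U}|f(l,x,\nu,u)|<\infty$ from Assumption \ref{assum.bound}; the bound $0\le P\le1$; and the finiteness of $[d]$. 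In particular $\max_{u\in U}H^{\pi^n}(t,x,u)\to\max_{u\in U}H^{\pi^\infty}(t,x,u)$.

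With these two ingredients I would pass to the limit in the optimality identity. Fix $(t,x)$. From $\bar\pi^n\in\Phi^\nu(\pi^n)$ and the equivalence above, $\int_U H^{\pi^n}(t,x,u)\,\bar\pi^n_t(x)(du)=\max_{u\in U}H^{\pi^n}(t,x,u)$. Writing
\[
\int_U H^{\pi^\infty}\,d\bar\pi^\infty_t(x)-\int_U H^{\pi^n}\,d\bar\pi^n_t(x)=\int_U\!\bigl(H^{\pi^\infty}-H^{\pi^n}\bigr)d\bar\pi^n_t(x)+\int_U H^{\pi^\infty}\,d\bigl(\bar\pi^\infty_t(x)-\bar\pi^n_t(x)\bigr),
\]
the first term tends to $0$ by the uniform convergence above and the second by the weak convergence $\bar\pi^n_t(x)\to\bar\pi^\infty_t(x)$ (with $H^{\pi^\infty}(t,x,\cdot)$ continuous). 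Hence $\int_U H^{\pi^\infty}(t,x,u)\,\bar\pi^\infty_t(x)(du)=\lim_n\max_{u\in U}H^{\pi^n}(t,x,u)=\max_{u\in U}H^{\pi^\infty}(t,x,u)$, and so, by the equivalence again, $\supp(\bar\pi^\infty_t(x))\subset\argmax_{u\in U}H^{\pi^\infty}(t,x,u)$. Since $(t,x)$ is arbitrary, $\bar\pi^\infty\in\Phi^\nu(\pi^\infty)$, which proves that $\Phi^\nu$ has a closed graph.

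I expect the main obstacle to be the uniform-in-$u$ convergence of $H^{\pi^n}(t,x,\cdot)$ in the second paragraph: Lemma \ref{lm.vmu.converge} delivers convergence only pointwise in the state and population variables, and this has to be upgraded to convergence that is uniform over the control set $U$, so that a support point of the limiting measure is forced to maximize the limiting objective. This is precisely where Assumption \ref{assum.bound} (uniform boundedness of $v^{\pi^n,\nu}$) and Assumption \ref{assum:continuous} (uniform continuity of $f$ and $P$) are needed together; the localization of weak star convergence and the final limiting computation are then routine.
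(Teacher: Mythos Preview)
Your proof is correct. The paper's own proof shares the same skeleton---take converging sequences, invoke Lemma~\ref{lm.vmu.converge} for $\mu^{\pi^n,\nu}_t\to\mu^{\pi^\infty,\nu}_t$ and $v^{\pi^n,\nu}(t+1,\cdot)\to v^{\pi^\infty,\nu}(t+1,\cdot)$, combine with the uniform continuity \eqref{eq:fP.uniformcts}, and localize the weak star convergence via indicator $C$-functions---but it handles the final step differently. Rather than reformulate the support condition as the integral identity $\int_U H^\pi\,d\bar\pi_t(x)=\max_u H^\pi$ and pass to the limit there, the paper works directly with the argmax sets $E^n:=\argmax_u H^{\pi^n}(t_0,x_0,u)$: it shows $\max_u H^{\pi^n}\to\max_u H^{\pi^\infty}$, argues that any closed ball disjoint from $E^\infty$ is eventually disjoint from $E^n$, and then tests against a continuous bump function supported in that ball to conclude $\bar\pi^\infty_{t_0}(x_0)$ assigns it zero mass. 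Your integral-characterization route is a bit more economical---the uniform convergence of $H^{\pi^n}(t,x,\cdot)$ plus weak convergence of $\bar\pi^n_t(x)$ dispatches the limit in one line---whereas the paper's ball argument makes the set-valued convergence of the argmax correspondence explicit, which is closer in spirit to a direct upper-hemicontinuity verification. Both rest on exactly the same analytic inputs.
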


\begin{proof}
	Take $(\pi^n, \bar\pi^n)_{n\in\N}\subset\Pi$ such that $\bar{\pi}^n\in \Phi^\nu(\pi^n)$ for each $n\in \N$ and 
	$$\pi^n\to^* \pi^\infty, \bar\pi^n\to^* \bar\pi^{\infty} \; \text{as $n\to\infty$}, \quad \text{for some $\pi^\infty, \bar\pi^\infty\in\Pi$.}$$
	Notice that $\Phi^\nu(\pi^n)$ is non-empty for each $n\in \N\cup\{ \infty\}$ as stated in Remark \ref{rm.Phinonempty}.
	 We now prove $\bar\pi^\infty\in \Phi^\nu(\pi^\infty)$. And it suffices to prove, for each $(t,x)\in \T\times [d]$ that
	\be\label{eq.prop.closedgraph1} 
	\supp(\bar\pi^\infty (t,x))\subset \argmax_{u\in U}\left\{ f \left( 0, x, \mu^{\pi^\infty,\nu}_t, u \right)+P\left( x,\mu^{\pi^\infty,\nu}_t, \cdot, u \right)\cdot v^{\pi^\infty, \nu}(t+1,\cdot) \right\}.
	\ee 
	
	Fix an arbitrary $(t_0, x_0)\in \T\times [d]$. 
	%{\color{black}Then by the uniform continuity shown in \eqref{eq:fP.uniformcts} and the boundedness of $f, P$ and $\{v^{\pi^n, \nu}\}_{n\in \N\cup \{\infty\}}$, we have 
%\be\label{eq.prop.closedgraph.new}   
%	\lim_{n\to\infty}\max_{u\in U} L^n(u) = \max_{u\in U} L^\infty(u).
%	\ee }
{\color{black}	Denote
	\begin{align*}
	\begin{cases}
	E^n:= &  \argmax_{u\in U}\left\{ f\left(0, x_0, \mu^{\pi^n, \nu}_{t_0}, u \right)+P\left( x,\mu^{\pi^n, \nu}_{t_0}, \cdot, u \right) \cdot v^{\pi^n,\nu}(t_0+1,\cdot)\right\}\\
    A^n := & \max_{u\in U}  \left\{  f\left(0, x_0, \mu^{\pi^n, \nu}_{t_0}, u \right)+P\left( x,\mu^{\pi^n, \nu}_{t_0}, \cdot, u \right) \cdot v^{\pi^n,\nu}(t_0+1,\cdot) \right\}
	\end{cases} \quad \forall n\in \N\cup \{\infty\}.
	\end{align*}
Then \eqref{eq.prop.closedgraph1} follows by proving
$ 
\left(\bar\pi^\infty_{t_0}(x_0) \right)(B_r(u_0))=0$ for all $u_0\in U$ and $r>0$ such that 
\be\label{eq.prop.closedgraph3}  
\overline{B_r(u_0)}\cap E^\infty=\emptyset \text{ and } B_r(u_0)\subset U.
\ee
By Assumption \ref{assum:continuous} and the compactness of $U$, $E^n$ are closed for each $n\in \N\cup\{\infty\}$. By Lemma \ref{lm.vmu.converge}, $\mu^{\pi^n, \nu}_{t_0}\to \mu^{\pi^\infty,\nu}_{t_0}$ in $\Pc([d])$ and $v^{\pi^n,\nu}(t_0+1, \cdot )\to v^{\pi^\infty, \nu}(t_0+1,\cdot)$ in $\R^d$.  This together with the uniform continuity of $f, P$ on the compact set $\Pc([d])\times U$ in \eqref{eq:fP.uniformcts} shows that 	\footnote{ {\color{black}Indeed, if $\limsup_{n\to\infty} A^n >A^\infty$, then there exists a subsequence $u_{n_j}\to \hat u$ for some $\hat u\in U$ such that 
	\begin{align*}
		&\lim_{j\to\infty} \left[f\left(0, x_0, \mu^{\pi^{n_j}, \nu}_{t_0}, u_{n_j} \right)+P\left( x,\mu^{\pi^{n_j}, \nu}_{t_0}, \cdot, u_{n_j} \right) \cdot v^{\pi^{n_j},\nu}(t_0+1,\cdot)\right] \\
		&= \limsup_{n\to\infty} A^n >A^\infty\geq  f\left(0, x_0, \mu^{\pi^{\infty}, \nu}_{t_0}, \hat u \right)+P\left( x,\mu^{\pi^{\infty}, \nu}_{t_0}, \cdot, \hat u \right) \cdot v^{\pi^{\infty},\nu}(t_0+1,\cdot),
	\end{align*} 
	a contradiction. Thus, $\limsup_{n\to\infty} A^n \leq A^\infty$. Similarly, $\liminf_{n\to\infty} A^n \geq A^\infty$. }}
\be\label{eq:prop.closedgraph.new1} 
	\lim_{n\to\infty} A^n = A^\infty. 
\ee 
Pick an arbitrary ball $B_r(u_0)$ that satisfies \eqref{eq.prop.closedgraph3}, let {\color{black}$D$} be the distance between $\overline{B_r(u_0)}$ and $E^\infty$, then ${\color{black}D}>0$. So $\overline{B_{r+{\color{black}D}/2}(u_0)}\cap  E^\infty=\emptyset$, and we can conclude that an integer $N$ exists such that 
	\be\label{eq.prop.closedgraph5} 
	E^n\cap \overline{B_{r+{\color{black}D}/2}(u_0)}=\emptyset, \quad \forall N\leq n<\infty.
	\ee
Otherwise, there exists a subsequence $(u^{n_k})_{k\in \N}$ such that $u^{n_k}\in E^{n_k}\cap\overline{B_{r+{\color{black}D}/2}(u_0)} $ and $u_{n_k}\to \bar u$ for some $\bar u\in \overline{B_{r+{\color{black}D}/2}(u_0)}$. Then, again by the uniform continuity of $f, P$ in \eqref{eq:fP.uniformcts},
\begin{align*}
	&f\left(0, x_0, \mu^{\pi^{\infty}, \nu}_{t_0}, \bar u \right)+P\left( x,\mu^{\pi^{\infty}, \nu}_{t_0}, \cdot, \bar u \right) \cdot v^{\pi^{\infty},\nu}(t_0+1,\cdot)\\
	=&\lim_{k\to\infty} \left[f\left(0, x_0, \mu^{\pi^{n_k}, \nu}_{t_0}, u_{n_k} \right)+P\left( x,\mu^{\pi^{n_k}, \nu}_{t_0}, \cdot, u_{n_k} \right) \cdot v^{\pi^{n_k},\nu}(t_0+1,\cdot)\right] \\
	=& \lim_{k\to\infty}A^{n_k}=A^\infty,
\end{align*}
	where the  last equality follows from \eqref{eq:prop.closedgraph.new1}. Thus, $\bar u\in E^\infty$, a contradiction.} Then consider a non-negative $C$-function $c(t,x,u): \T\times [d]\times U\mapsto \R$ such that $c(t,x,u)\equiv 0$ on $U$ if $(t,x)\neq (t_0,x_0)$, and 
	$$
	c(t_0, x_0,u)=1\; \text{for }u\in B_r(u_0),\quad   c(t_0,x_0,u)\equiv 0 \; \text{for } u\notin B_{r+{\color{black}D}/2}(u_0).
	$$
	By $\bar\pi^n\to^* \bar\pi^\infty$, 
	\bee
	\begin{aligned}
		&\int_U c(t_0,x_0, u)\bar\pi^\infty_{t_0}(x_0)(du)=d 2^{t_0+1}\int_{\T\times [d]}\left( \int_U c(t,x,u) \bar\pi^\infty_t(x)(du) \right) dm(t,x)\\
		&= \lim_{n\to\infty} d 2^{t_0+1}\int_{\T\times [d]}\left( \int_U c(t,x,u)\bar\pi^n_t(x)(du) \right) dm(t,x)= \lim_{n\to\infty}   \int_U c(t_0,x_0,u)\bar\pi^n_{t_0}(x_0)(du) =0,
	\end{aligned}
	\eee 
	where the last equality follows from \eqref{eq.prop.closedgraph5} and the fact that $\supp \left(\bar\pi^n_{t_0}(x_0) \right)\subset E^n$ for all $n\in \N$. Hence, $\left(\bar\pi^\infty_{t_0}(x_0) \right)(B_r(u_0))=0$. By the arbitrariness of $u_0$, \eqref{eq.prop.closedgraph1} is verified for the arbitrarily picked $(t_0, x_0)\in \T\times [d]$ and the proof is complete.
\end{proof}

Now we are ready to finish the proof for Theorem \ref{thm.exist.nonstatoinary}. Since $\Pi$ is a compact convex subset of a locally convex linear topological space. For an arbitrary $\nu\in \Pc([d])$, Lemma \ref{lm.closedgraph} together with Remark \ref{rm.Phinonempty} shows that the mapping $\Phi^\nu(\pi):\Pi\mapsto 2^{\Pi}$ is nonempty for all $\pi\in\Pi$ and has a closed graph. Then by Kakutani-Fan fixed point theorem (see \cite[Corollary 17.55]{guide2006infinite}), $\Phi^\nu$ has a fixed point. {\color{black}Take $\pi\in \Pi$ as a fixed point of $\Phi^\nu$, then by Proposition \ref{prop.cha.nonstationary}, $(\pi, \mu^{\pi,\nu})$ is an equilibrium in Definition \ref{def.nonstationary.relax} for the MFG with initial population distribution $\nu$.}% exists, thanks to Proposition \ref{prop.cha.nonstationary}.

\section{Precommitment equilibrium in the $N$-agent game }\label{sec:Nagent}
\subsection{$N$-agent games with time-inconsistency}\label{subsec:Ngame.notations}
Now we set up the finite-agent counterpart. Let $\{e_1,\cdots, e_d \}$ be the standard basis of $\R^d$. By taking an integer $N\geq 2$ as the number of agents, we define
\be\label{eq.def.DeltaN} 
\Delta_N:=\left\{(x,\nu)\in \Delta: N\nu\in \N^d\text{ and } N\nu(x)>0 \right\}.
\ee 
And the vector $\left( s^{N,1}_t,...,s^{N,N}_t \right)$ represents the states of all $N$ agents at time $t$.

Let $\bmpi^N:= (\pi^{N,1},...,\pi^{N,N})$ be an $N$-tuple where the $i$-th agent carries out the relaxed control $\pi^{N,i}\in \Pi$ for each $1\leq i\leq N$.
For $t\in \T$, denote by $\mu^{N}_t:= \frac{1}{N}\sum_{1\leq j\leq N} e_{s^{N,j}_t}$ the empirical distribution of the $N$ agents under $\bmpi^N$ at time $t$. 
%with a starting distribution $\mu^N_0=\nu$ such that $(s^{N,1}_0, \nu)\in \Delta_N$. 
For agent $i$, $s^{N,i}_{t+1}$ is determined by
$
P\left(s^{N,i}_t, \mu^{N}_t, \alpha^{N,i}_t \right)
$, where $\alpha^{N,i}_t, 1\leq i\leq N$, are mutually independent random variables following distributions $\pi^{N,i}_t \left( s_t^{N,i} \right), 1\leq i\leq N$, respectively. Then the law of the flow $\left(\mu^{N}_t\right)_{t\geq 1}$ is determined by $\bmpi^N$ and the initial population empirical distribution $\mu^{N}_0$. {\color{black}Take $(x,\nu)\in \Delta_N$, we introduce the following total pay-off for agent $i$ starting at state $x$ with 
%initial population empirical distribution 
$\mu^{N}_0=\nu$ as 
\be\label{eq.def.JNagent}  
\begin{aligned}
	J^{N,i, \bmpi^N}(x, \nu):= \E\left[ \sum_{t=0}^\infty  f^{\pi^{N,i}_t\left( s^{N,1}_t \right) }\left(t, s^{N,i}_t, \mu^{N}_t \right) \bigg | s^{N,i}_0=x, \mu^{N}_0=\nu \right]
\end{aligned}
\ee}
Since the dynamic of the flow $\mu^{N}_{[0:\infty)}$ is governed by $\bmpi^N$ and the initial value $\nu$, the value function $J^{N,1, \bmpi^N}(x, \nu)$ above is also determined by the $N$-tuple policy $\bm\pi^N$ and the initial values $(x,\nu)$. Hence, there is no need to explicitly indicate the tail $\mu^{N}_{[1:\infty)}$ in the notation $J^{N,1, \bmpi^N}$. {\color{black}When all $N$ agents adapts the same policy $\pi\in \Pi$, i.e., $\bmpi^N=(\pi,\cdots, \pi)$, we always take  agent 1 as the representative agent in an $N$-agent game.}

Given an $N$-tuple $\bmpi^N=\left( \pi^{N,1},\cdots, \pi^{N,N} \right)$ such that $\pi^{N,i}\in \Pi$ for each $1\leq i\leq N$, we denote by $(\bmpi^N_{-i}, \pi')$ the new $N$-tuple by replacing the $i$-th component $\pi^{N,i}$ with another policy $\pi'\in \Pi$. Given two $N$-tuples $\bmpi^N=(\pi^{N,1},...,\pi^{N,N})$ and ${\bar \bmpi}^N=({\bar \pi}^{N,1},...,{\bar \pi}^{N,N})$ and two non-negative integers $t_0<t_1<\infty$, we further denote by ${\bar \bmpi}^N_{[t_0: t_1)}\otimes \bmpi^N_{[t_1: \infty)}$ as the new $N$-tuple such that agent $i$ applies ${\bar \pi}^{N,i}$ during steps $t_0$ to $t_1-1$ then applies $\pi^{N,i}$ from $t_1$ onward, for all $1\leq i\leq N$. 
%We further denote the corresponding value for agent 1 starting at state $x$ with initial empirical distribution $\nu$ by 
%$${\color{black}J^{N,1,(\bmpi^N_{-1}, \varpi)\otimes_{1}\bm\pi^N} \left(x, \nu \right)}$$ %function as 
%\begin{align*}
%	&J^{N,1,(\bmpi^N_{-1}, \varpi)\otimes_{1}\bm\pi^N} \left(x, \nu \right) \\
%	&:= \E\left[ \sum_{l=0}^\infty  f^{\pi^{N,1}\left( s^{N,1}_l, %\mu_l^{N,\bmpi^N,\nu} \right) }\left(t+l, s^{N,1}_l, %\mu^{N,\bmpi^N,\nu}_l \right) \bigg | s^{N,1}_0=x, \mu^{N}_0=\nu \right]
%\end{align*}
{\color{black}
\begin{Remark}\label{rm:initial.iid}
%Instead of $s^{N,1}_0= x\in [d]$ as in \eqref{eq.def.JNagent}, 
One common condition for a $N$-agent game in the MFG literature states the following:
\be\label{eq:precommitted.iid0}
\text{	$\left( s^{N,i}_0 \right)_{1\leq i\leq N}$ are independently simulated by a common distribution $\nu\in \Pc([d])$.}
\ee
%that the $N$ agents' initial states $( s^{N,i}_0)_{1\leq i\leq N}$  are independent and identically simulated by a common distribution $\nu\in \Pc([d])$. In such scenario,  $( s^{N,i}_0)_{1\leq i\leq N}$ and $\mu^{N}_0$ are random. 
For the rest part of this paper, whenever a formula within an $N$-agent game involves a random variable rooted from the above condition  \eqref{eq:precommitted.iid0},
%that $( s^{N,i}_0)_{1\leq i\leq N}$ are independent and identically simulated by some $\nu\in \Pc([d])$, 
we shall use the subscript notation $\E_\nu$ when taking an expectation, where $\nu$ is the common distribution generating all initial values for the $N$ agents.
%$\E_\nu$ is taken on the value function $J^{N,1,\pi^N}$ as that $\nu$ represents the initial distribution that simulates all the $N$ agents' initial states, $\left( s^{N,i}_0\right)_{1\leq i\leq N}$. %Correspondingly, by taking $t=0$ in \eqref{eq.def.J}, we have 
	%$$
	%J^{\pi}\left(x, \nu,\mu_{[0:\infty)}\right)= J^{\pi_{[0:\infty)}}\left(x, \nu,\mu_{[1:\infty)}\right)= \E_x\left[ \sum_{l\in \T} f^{\pi_{t+l}\left( s^{\pi,\mu}_{t+l} \right)}\left(l, s_{t+l}^{\pi,\mu}, \mu_{t+l} \right)  \bigg | s^{\pi,\mu}_t=x, \mu_t=\nu \right],
	%$$
\end{Remark}}

%As stated in Question \eqref{sentence:I}, one topic in MFG literature is to study whether or not an MFG equilibrium can be an approximate equilibrium in the $N$-agent game.
Now we make the first attempt to address Question \eqref{sentence:I}. The following proposition answers what happens when employing an MFG equilibrium as defined in Definition \ref{def.nonstationary.relax} within an $N$-agent game.

\begin{Proposition}\label{prop.precommittedN}
	{\color{black}Suppose Assumptions \ref{assum.bound}, \ref{assum:continuous} hold.} Let $(\pi, \mu)\in \Pi\times \Lambda$ be an equilibrium as defined in Definition \ref{def.stationary.relax} for the MFG with initial population distribution $\nu\in \Pc([d])$. For any $\eps>0$, there exists $N_0$ such that for all $N\geq N_0$, suppose $(s^{N,i}_0)_{1\leq i\leq N}$ are independently simulated by $\nu$, then the replicating $N$-tuple $\bm{\pi}^N=(\pi,...,\pi)$ satisfies that 
	{\color{black}\be\label{eq.prop.precommittedN} 
	\E_{\nu}\left[J^{N,i,\bmpi^N}\left(s^{N,i}_0, \mu^{N}_0\right) \right] \geq  \sup_{\pi' \in \Pi}\E_{\nu}\left[J^{N,i,\left( \bmpi^N_{-i}, \pi' \right)_{[0:1)}\otimes\bm\pi^N_{[1:\infty)}} \left(s^{N,i}_0, \mu^{N}_0 \right) \right] -\eps,\quad \text{for }  1\leq i\leq N.
	\ee}
\end{Proposition}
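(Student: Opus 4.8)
The plan is to compare the $N$-agent quantities on both sides of \eqref{eq.prop.precommittedN} to their mean field counterparts and then invoke the one-step optimality built into Definition~\ref{def.nonstationary.relax}. Fix $\eps>0$; by Assumption~\ref{assum.bound} choose $T$ with $\sum_{t>T}\sup_{(x,\nu)\in\Delta,\,u\in U}|f(t,x,\nu,u)|<\eps/8$, so that the tail beyond step $T$ of every reward stream is negligible, uniformly, both in the $N$-agent game and in the MFG. Write $\bmpi^N=(\pi,\dots,\pi)$ and, for $\pi'\in\Pi$, $\tilde{\bmpi}^N(\pi'):=(\bmpi^N_{-i},\pi')_{[0:1)}\otimes\bmpi^N_{[1:\infty)}$; under $\tilde{\bmpi}^N(\pi')$ only agent $i$ behaves differently and only at step $0$, and by exchangeability it suffices to treat one fixed $i$ (the bound obtained will not depend on $i$). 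Note also that in the MFG the flow after step $0$ is the \emph{deterministic} $\mu^{\pi,\nu}_{[1:\infty)}$, unaffected by the tagged agent's step-$0$ choice.

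\textbf{Propagation of chaos.} The first step is a law-of-large-numbers estimate: when the initial states are i.i.d.\ $\nu$ and everyone uses $\pi$ from step $1$ onward (agent $i$ using an arbitrary $\pi'$ at step $0$, the others using $\pi_0$), then for each $t\le T$, $\E_\nu\!\left[|\mu^N_t-\mu^{\pi,\nu}_t|\right]\to 0$ as $N\to\infty$, \emph{uniformly in $\pi'$}. This goes by induction on $t$: $\E_\nu[|\mu^N_0-\nu|]\to0$ since $\mu^N_0$ is an empirical mean of i.i.d.\ samples; given the step-$t$ configuration, the next empirical measure has conditional mean $\mu^N_t\cdot P^{\pi_t}(\mu^N_t)$ up to an $O(1/N)$ correction caused by agent $i$'s step-$0$ deviation (it moves a single coordinate of the empirical measure), and fluctuates around this mean by $O(1/\sqrt N)$ in $L^1$ as a conditionally-independent bounded sum; finally one passes from $\mu^N_t\cdot P^{\pi_t}(\mu^N_t)$ to $\mu^{\pi,\nu}_t\cdot P^{\pi_t}(\mu^{\pi,\nu}_t)=\mu^{\pi,\nu}_{t+1}$ using continuity of $\nu\mapsto\nu\cdot P^{\pi_t}(\nu)$ on the compact $\Pc([d])$ (Assumption~\ref{assum:continuous}) and dominated convergence. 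Since $\pi'$ enters only through that one $O(1/N)$ term, the convergence is uniform in $\pi'$.

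\textbf{Coupling.} Next, couple the tagged agent's trajectory $s^{N,i}$ with a single-agent MFG trajectory $\bar s$ driven by the deterministic flow $\mu^{\pi,\nu}$: set $\bar s_0=s^{N,i}_0$, let $\bar s$ use the same step-$0$ action law ($\pi_0$ for the left side of \eqref{eq.prop.precommittedN}, $\pi'_0$ for the right side) and $\pi$ thereafter, and at each step use a maximal coupling of the two one-step kernels $P^{\,\cdot}(x,\mu^N_t,\cdot)$ and $P^{\,\cdot}(x,\mu^{\pi,\nu}_t,\cdot)$. By \eqref{eq:fP.uniformcts} the total-variation gap between these kernels on $\{s^{N,i}_t=\bar s_t=x\}$ is at most $\tfrac{d}{2}\,\omega\!\left(|\mu^N_t-\mu^{\pi,\nu}_t|\right)$ for a modulus of continuity $\omega$ of $P$ (uniform in the action value $u$), so $\P(s^{N,i}_t\ne\bar s_t)\le\sum_{l<t}\tfrac{d}{2}\E_\nu[\omega(|\mu^N_l-\mu^{\pi,\nu}_l|)]\to0$ for each $t\le T$, uniformly in $\pi'$, by the previous step and bounded convergence. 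On the event that the chains agree the per-step reward gap is bounded by the modulus of continuity of $f$ at $|\mu^N_t-\mu^{\pi,\nu}_t|$ (and at $t=0$ by the one at $|\mu^N_0-\nu|$), again uniform in the action law via \eqref{eq:fP.uniformcts}; on the complement it is bounded by $2\sup_{\Delta\times U}|f(t,\cdot)|$. Summing over $t\le T$ and adding the tail bound from $T$ yields
\[
\Big|\E_\nu\big[J^{N,i,\bmpi^N}(s^{N,i}_0,\mu^N_0)\big]-\E_{x\sim\nu}\big[J^{\pi}(x,\nu,\mu^{\pi,\nu}_{[1:\infty)})\big]\Big|\le \tfrac{\eps}{2}
\]
and, uniformly over $\pi'\in\Pi$,
\[
\Big|\E_\nu\big[J^{N,i,\tilde{\bmpi}^N(\pi')}(s^{N,i}_0,\mu^N_0)\big]-\E_{x\sim\nu}\big[J^{\pi'_{[0:1)}\otimes\pi_{[1:\infty)}}(x,\nu,\mu^{\pi,\nu}_{[1:\infty)})\big]\Big|\le \tfrac{\eps}{2}
\]
for all $N$ large enough; here the $O(1/N)$ perturbation of $\mu^N$ by the deviation is absorbed into the propagation-of-chaos estimate.

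\textbf{Conclusion.} By the equilibrium property of $(\pi,\mu)$ — Condition~B of Definition~\ref{def.nonstationary.relax} at $t=0$, equivalently the $\argmax$ characterization of Proposition~\ref{prop.cha.nonstationary} via \eqref{eq:J.auxiv}--\eqref{eq.Phi1} — for every $x\in[d]$ one has $J^{\pi}(x,\nu,\mu^{\pi,\nu}_{[1:\infty)})=\sup_{\varpi\in\Pc(U)}J^{\varpi\otimes_1\pi_{[1:\infty)}}(x,\nu,\mu^{\pi,\nu}_{[1:\infty)})$, the supremum being attained at a Dirac on the non-empty Borel $\argmax$ set of Remark~\ref{rm.Phinonempty}; selecting such a point measurably in $x$ gives $\sup_{\pi'\in\Pi}\E_{x\sim\nu}[J^{\pi'_{[0:1)}\otimes\pi_{[1:\infty)}}(x,\nu,\mu^{\pi,\nu}_{[1:\infty)})]=\E_{x\sim\nu}[J^{\pi}(x,\nu,\mu^{\pi,\nu}_{[1:\infty)})]$. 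Taking the supremum over $\pi'$ in the second display, then using this identity and the first display, yields for $N$ large and every $i$
\[
\sup_{\pi'\in\Pi}\E_\nu\big[J^{N,i,\tilde{\bmpi}^N(\pi')}(s^{N,i}_0,\mu^N_0)\big]\le\E_{x\sim\nu}\big[J^{\pi}(x,\nu,\mu^{\pi,\nu}_{[1:\infty)})\big]+\tfrac{\eps}{2}\le\E_\nu\big[J^{N,i,\bmpi^N}(s^{N,i}_0,\mu^N_0)\big]+\eps,
\]
which is \eqref{eq.prop.precommittedN}. The main obstacle is the coupling step, and specifically making the estimates \emph{uniform over the deviation $\pi'$} while only continuity — not Lipschitz continuity — of $f$ and $P$ in the measure variable is available; this forces a qualitative argument (moduli of continuity, dominated convergence) in place of explicit $O(1/\sqrt N)$ rates, and relies on the fact that a one-agent step-$0$ deviation shifts the empirical flow by only $O(1/N)$.
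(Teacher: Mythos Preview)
Your proof is correct and follows the same architecture as the paper's: reduce to one agent by symmetry, establish the uniform-in-$\pi'$ propagation of chaos $\sup_{\pi'}\E_\nu[|\mu^N_t-\mu^{\pi,\nu}_t|]\to0$ by induction, transfer this to the tagged agent's reward stream, and conclude via the $t=0$ equilibrium condition in Definition~\ref{def.nonstationary.relax}. The only variation is that where the paper introduces an auxiliary chain $z^{\pi'\otimes_1\pi}_t$ driven by the deterministic flow $\mu^{\pi,\nu}$ (your $\bar s$) and argues by convergence in distribution of $(\mu^N_t,e_{s^{N,1}_t})$ uniformly over $\pi'$, you build an explicit maximal coupling and bound $\P(s^{N,i}_t\ne\bar s_t)$ directly --- a minor technical variant of the same step.
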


\begin{proof}
	{\color{black}In the formulation of right-hand side of \eqref{eq.prop.precommittedN}, the $N$-tuple that agent $i$ takes deviation from is replicating.
		%all other agents adapt $\pi$ except agent $i$, who first applies $\pi'$ then switch to $\pi$. 
	Thus, we simply take agent 1 as the representative agent and prove the result only for $i=1$, which is
	\be\label{eq.prop.precommittedN1} 
{\color{black}
	\E_{\nu}\left[J^{N,1,\bmpi^N}\left(s^{N,1}_0, \mu^{N}_0\right) \right] \geq  \sup_{\pi' \in \Pi}\E_{\nu}\left[J^{N,1,\left( \bmpi^N_{-1}, \pi' \right)_{[0:1)}\otimes\bm\pi^N_{[1:\infty)}} \left(s^{N,1}_0, \mu^{N}_0 \right) \right] -\eps,\; \text{for $N$ big enough.}}
\ee	
}
	
	Fix an arbitrary $\nu\in \Pc([d])$. Let $(\pi,\mu)\in \Pi\times \Lambda$ be an equilibrium in Definition \ref{def.nonstationary.relax} for the MFG with initial population distribution $\nu$. For an arbitrary $\pi'\in \Pi$, let $\mu^N_{[0:\infty)}$ be the empirical population flow under the $N$-tuple $\left( \bmpi^N_{-1}, \pi' \right)_{[0:1)}\otimes\bm\pi^N_{[1:\infty)}$ with $ \bmpi^N=(\pi,\cdots, \pi)$ and $(s^{N,i}_t)_{t\in \T}$ be the dynamic for agent $i$.
	
We first prove by induction, for each $t\in \T$ that 
	\be\label{eq.precommit.Nmu0}  
	\sup_{\pi'\in \Pi} \E_\nu \left[\left|\mu^N_t-\mu^{\pi,\nu}_t\right| \right] \to 0, \text{ as $N\to\infty$}.
	\ee 
	When $t=0$, \eqref{eq.precommit.Nmu0} holds by the Strong Law of Large Numbers and the condition that $\left( s^{N,i}_0 \right)_{1\leq i\leq N}$ are i.i.d. and simulated by $\nu$. Suppose \eqref{eq.precommit.Nmu0} holds for $t$, now we prove the case for $t+1$. For $i=1,...,N$, let $\alpha^{N,i}_t$ be mutually independent random action values that are simulated by $\pi_t\left(s^{N,i}_t \right)$ respectively.
	A direct calculation gives that 
	\begin{align}
		\E_\nu \left[ \left| \mu^{N}_{t+1}- \mu^{\pi,\nu}_{t+1} \right|  \right]
		&\leq \E_\nu \left[ \left| \frac{1}{N}\sum_{i=1}^N  e_{s^{N,i}_t}\cdot P\left(\mu^{N}_t,\alpha^{N,i}_t \right)-  \frac{1}{N}\sum_{i=1}^N  e_{s^{N,i}_t}\cdot P\left(\mu^{\pi, \nu}_t,\alpha^{N,i}_t \right)\right|  \right]\notag\\
		&\qquad +\E_\nu \left[\left|  \frac{1}{N}\sum_{i=1}^N  e_{s^{N,i}_t}\cdot P\left(\mu^{\pi,\nu}_t,\alpha^{N,i}_t \right) - \mu^{N}_t\cdot P^\pi(\mu^{\pi,\nu}_t) \right| \right] \notag \\
		&\qquad +\E_\nu \left[ \left|  \mu^{N}_t\cdot P^\pi(\mu^{\pi,\nu}_t) - \mu^{\pi,\nu}_t\cdot P^\pi(\mu^{\pi,\nu}_t) \right| \right], \quad \text{when } t\geq 1. \label{eq:prop.precommit.Nmu1}
	\end{align}
	The convergence of the first and third terms of the right-hand side of \eqref{eq:prop.precommit.Nmu1} follows from the induction assumption and the uniform continuity of $P(x,\mu,u)$ on $(\mu,u)\in \Pc([d])\times U$ (see \eqref{eq:fP.uniformcts}). Meanwhile, \cite[Lemma A.2]{budhiraja2015long} gives the convergence of the second term. Since the contribution of $\alpha^{N,1}_0$ to the term $\frac{1}{N}\sum_{i=1}^N  e_{s^{N,i}_0}\cdot P\left(\mu^{N}_0,\alpha^{N,i}_0 \right)$ in \eqref{eq:prop.precommit.Nmu1} vanishes as $N\to\infty$, the above argument is also valid for $t=0$, even though $\alpha^{N,1}_0$ is simulated by $\pi'_0$ instead of $\pi_0$.
	In sum, \eqref{eq.precommit.Nmu0} holds. 
	
{\color{black} Next, for any $\pi'\in\Pi$, consider the following auxiliary Markov chain $(z^{\pi'\otimes_1\pi}_t)_{0\leq t\leq T}$ with $z^{\pi'\otimes_1\pi}_0\sim \nu$ that is independent of $(s^{N,i}_t)_{t\in \T}$. The dynamic of $(z^{\pi'\otimes_1\pi}_t)_{0\leq t\leq T}$ is guided by the deterministic transition matrices $$
\begin{aligned}
&Q_0=(Q_0(x,y))_{x,y\in[d]}:=\left( P^{\pi'_0(x)}(x,\nu,y) \right)_{x,y\in[d]},\\
&Q_t=(Q_t(x,y))_{x,y\in[d]}:= \left( P^{\pi_{t}(x)}\left(x, \mu^{\pi,\nu}_{t}, y\right)\right)_{x,y\in[d]} \; \text{for } 1\leq t\leq T-1.
\end{aligned}
$$
That is, the dynamic of $(z^{\pi'\otimes_1\pi}_t)_{0\leq t\leq T}$ is first guided by $\pi_0$ at step zero then by $\pi_t$ for each step $1\leq t\leq T$.
By \eqref{eq.precommit.Nmu0} and the uniform continuity of $P$ in \eqref{eq:fP.uniformcts}, 
\be\label{eq:prop.sN0new}   
\begin{aligned}
&\lim_{N\to\infty}\sup_{\pi'\in \Pi, x, y\in[d]}\E_\nu\left[ \left|P^{\pi'_0}\left(x, \mu^{N}_0, y \right)-Q_0(x, y) \right| \right] = 0,\\
&\lim_{N\to\infty}\sup_{\pi'\in \Pi, x, y\in[d]}\E_\nu\left[ \left|P^{\pi_t}\left(x, \mu^{N}_t, y \right)-Q_t(x, y) \right| \right] = 0 \quad  \text{for } 1\leq t\leq T-1.
\end{aligned}
\ee 
Now we prove by induction, for each $0\leq t\leq T$ that
\be\label{eq:prop.sNnew}  
\text{$\left(\mu^N_t, e_{s^{N,1}_t} \right) \to \left(\mu^{\pi, \nu}_t, e_{z^{\pi'\otimes_1\pi}_t}\right)$ in distribution uniformly over $\pi'\in \Pi$.}
\ee
By \eqref{eq.precommit.Nmu0}, $\mu^N_0\to \nu$ in probability uniformly over $\pi'$, where $\nu$ is deterministic, and both $z^{\pi'\otimes_1\pi}_0$ and $s^{N,1}_0$ have distribution equal to $\nu$. Then \eqref{eq:prop.sNnew} holds for $t=0$. Suppose \eqref{eq:prop.sNnew} holds for $t$ and we prove the case for $t+1$. Due to \eqref{eq.precommit.Nmu0} and the fact that $\mu^{\pi,\nu}_{t+1}$ is deterministic, to prove the joint distribution of $(\mu^N_{t+1}, e_{s^{N,1}_{t+1}})$ converges to the joint distribution of $(\mu^{\pi, \nu}_{t+1}, e_{z^{\pi'\otimes_1\pi}_{t+1}})$ uniformly over $\pi'\in \Pi$, it suffices to prove $e_{s^{N,1}_{t+1}}$ converges to $e_{z^{\pi'\otimes_1\pi}_{t+1}}$in distribution uniformly over $\pi'\in \Pi$. Notice that
\be\label{eq:prop.sNnew0}    
e_{s^{N,1}_{t+1}} = e_{s^{N,1}_{t}}\cdot P^{\pi_t}\left(\mu^{N,1}_t \right) \text{ in distribution}.
\ee 
Then by \eqref{eq:prop.sN0new} \footnote{{\color{black} When $t=0$, the first equality in \eqref{eq:prop.sN0new} should be applied and $P^{\pi_t}$ should be replaced by $P^{\pi'_0}$ in \eqref{eq:prop.sNnew0} and \eqref{eq:prop.sNnew1}. For $t\geq 1$, the second equality in \eqref{eq:prop.sN0new} should be used.}}, the finiteness of $[d]$, the uniform continuity of $P$ (see \eqref{eq:fP.uniformcts}) and the induction hypothesis, we have 
\be\label{eq:prop.sNnew1}    
e_{s^{N,1}_{t}}\cdot P^{\pi_t}(\mu^{N,1}_t) \to e_{z^{\pi'\otimes_1\pi}_t}\cdot P^{\pi_t}(\mu^{\pi,\nu}_t) = e_{z^{\pi'\otimes_1\pi}_{t+1}}\text{ in distribution uniformly over $\pi'\in \Pi$},
\ee 
and the induction proof is complete.

Next, we prove that
	\be\label{eq.precommit.Nv}  
	\begin{aligned}
		\lim_{N\to\infty} \sup_{\pi'\in \Pi}\Bigg| & \E_\nu\left[ J^{N,1,\left( \bmpi^N_{-1}, \pi' \right)_{[0:1)}\otimes\bm\pi^N_{[1:\infty)}} \left(s^{N,1}_0, \mu^{N}_0 \right)  \right]\\
		& - \E_\nu\left[ J^{\pi'_{[0:1)}\otimes \pi_{[1:\infty)}} \left(s^{N,1}_0,\nu, \mu^{\pi,\nu}_{[1:\infty)} \right)  \right]  \Bigg| =0 .
	\end{aligned}
	\ee 		
	Notice that for an arbitrary $\pi'\in\Pi$,
	\be\label{eq.precommit.Nv0} 
	\begin{aligned}
		&\E_\nu\left[ J^{N,1,\left( \bmpi^N_{-1}, \pi' \right)_{[0:1)}\otimes\bm\pi^N_{[1:\infty)}} \left(s^{N,1}_0, \mu^{N}_0 \right)  \right]\\
		&=\E_\nu\left[ f^{\pi'_{0} \left(s^{N,1}_{0}\right)}\left(0, s^{N,1}_{0}, \mu^N_{0}\right) +\sum_{l=1}^{\infty} f^{\pi_{l} \left(s^{N,1}_{l}\right)}\left(l, s^{N,1}_{l}, \mu^N_{l}\right)  \right],\\
		&\E_\nu\left[ J^{\pi'_{[0:1)}\otimes \pi_{[1:\infty)}} \left(s^{N,1}_0,\nu, \mu^{\pi,\nu}_{[1:\infty)} \right)  \right] \\
		%= \E\left[ J^{\pi'_{[0:1)}\otimes \pi_{[1:\infty)}} \left(z^{\pi'\otimes \pi}_0,\nu, \mu^{\pi,\nu}_{[1:\infty)} \right)  \right]\\
		%= \E\left[ \sum_{l=0}^{\infty} f^{\pi_{l} \left(s^{\pi,\mu^{\pi,\nu}}_{l}\right)}\left(l, s^{\pi,\mu^{\pi,\nu}}_{l}, \mu^{\pi,\nu}_{l}\right) \right]
		&= \E\left[f^{\pi'_{0} \left(z^{\pi'\otimes_1\pi}_{0}\right)}\left(0, z^{\pi'\otimes_1\pi}_{0}, \nu \right)+ \sum_{l=1}^{\infty} f^{\pi_{l} \left(z^{\pi'\otimes_1\pi}_{l}\right)}\left(l, z^{\pi'\otimes_1\pi}_{l}, \mu^{\pi,\nu}_{l}\right)\right].
	\end{aligned}
\ee
	For any $\eps>0$, from Assumption \ref{assum.bound} we conclude that the tails for $l>T$ in the two sums of \eqref{eq.precommit.Nv0} are less than $\eps$ for some $T\in \T$. 
	For each $1\leq l\leq T$, by \eqref{eq:prop.sNnew}, \eqref{eq:fP.uniformcts} and the finiteness of $[d]$,
	\begin{align}
		&\sup_{\pi'\in \Pi} \left| \E_\nu\left[f^{\pi_{l} \left(s^{N,1}_{l}\right)}\left(l, s^{N,1}_{l}, \mu^N_{l}\right) \right] - \E\left[ f^{\pi_{l} \left( z^{\pi'\otimes_1\pi}_{l}\right)}\left(l, z^{\pi'\otimes_1\pi}_{l}, \mu^{\pi,\nu}_{l}\right) \right] \right| \notag\label{eq.precommit.Nv'} 
		%&\leq 
		%\sup_{\pi'\in \Pi}\left|  \E_\nu\left[ f^{\pi_{l} \left(s^{N,1}_{l}\right)}\left(l, s^{N,1}_{l}, \mu^N_{l}\right) \right]- \E_\nu\left[ f^{\pi_{l} \left(s^{N,1}_{l}\right)}\left(l, s^{N,1}_{l},  \mu^{\pi,\nu}_{l}\right) \right] \right| \notag\\
		%&\quad+ \sup_{\pi'\in \Pi}\left|  \E_\nu\left[ f^{\pi_{l} \left(s^{N,1}_{l}\right)}\left(l, s^{N,1}_{l}, \mu^{\pi,\nu}_{l} \right) \right]- \E \left[ f^{\pi_{l} \left(z^{\pi'\otimes_1\pi}_{l}\right)}\left(l, z^{\pi'\otimes_1\pi}_{l},\mu^{\pi,\nu}_{l} \right) \right] \right|
		\to 0\quad \text{as $N\to\infty$}, \notag
	\end{align}
and
	\bee  
\sup_{\pi'\in \Pi} \left| \E_\nu\left[f^{\pi'_0 \left(s^{N,1}_0 \right)}\left(0, s^{N,1}_0 , \mu^N_0 \right) \right] - \E\left[ f^{\pi'_0 \left( z^{\pi'\otimes_1\pi}_0 \right)}\left(0, z^{\pi'\otimes_1\pi}_0 , \mu^{\pi,\nu}_0 \right) \right] \right|	\to 0\quad \text{as $N\to\infty$}.
	\eee
Therefore, \eqref{eq.precommit.Nv} holds by the arbitrariness of $\eps$.
	
Now we finish the proof for the desired result. For any $\eps>0$, by applying \eqref{eq.precommit.Nv}, there exits $N_0$ such that, for all $N\geq N_0$,
\bee
\begin{cases}
 \sup_{\pi'\in \Pi}  \left| \E_\nu\left[ J^{N,1,\left( \bmpi^N_{-1}, \pi' \right)_{[0:1)}\otimes\bm\pi^N_{[1:\infty)}} \left(s^{N,1}_0, \mu^{N}_0 \right)  \right]
- \E_\nu\left[ J^{\pi'_{[0:1)}\otimes \pi_{[1:\infty)}} \left(s^{N,1}_0,\nu, \mu^{\pi,\nu}_{[1:\infty)} \right)  \right]  \right| <\eps,\\
\left| \E_\nu\left[ J^{N,1,\bm\pi^N} \left(s^{N,1}_0, \mu^{N}_0 \right)  \right]
- \E_\nu\left[ J^{ \pi} \left(s^{N,1}_0,\nu, \mu^{\pi,\nu}_{[1:\infty)} \right)  \right]  \right| <\eps.
\end{cases}
\eee
Then, combining the above inequalities and \eqref{eq.def.nonstationary1} with $t=0$ yields 
\begin{align*}
&\E_{\nu}\left[J^{N,1,\bmpi^N}\left(s^{N,1}_0, \mu^{N}_0\right) \right] - \sup_{\pi' \in \Pi}\E_{\nu}\left[J^{N,1,\left( \bmpi^N_{-1}, \pi' \right)_{[0:1)}\otimes\bm\pi^N_{[1:\infty)}} \left(s^{N,1}_0, \mu^{N}_0 \right) \right]\\
&\geq -\left| \E_{\nu}\left[J^{N,1,\bm\pi^N} \left(s^{N,1}_0, \mu^{N}_0 \right) - J^{\pi} \left(s^{N,1}_0,\nu, \mu^{\pi,\nu}_{[1:\infty)} \right)  \right] \right|\\
&\quad - \sup_{\pi' \in \Pi}\left| \E_{\nu}\left[  J^{\pi} \left(s^{N,1}_0,\nu, \mu^{\pi,\nu}_{[1:\infty)} \right)  - J^{\pi'_{[0:1)}\otimes \pi_{[1:\infty)}} \left(s^{N,1}_0,\nu, \mu^{\pi,\nu}_{[1:\infty)} \right)  \right]\right|\\
&\quad - \sup_{\pi' \in \Pi}\left| \E_{\nu}\left[ J^{\pi'_{[0:1)}\otimes \pi_{[1:\infty)}} \left(s^{N,1}_0,\nu, \mu^{\pi,\nu}_{[1:\infty)} \right) - J^{N,1,\left( \bmpi^N_{-1}, \pi' \right)_{[0:1)}\otimes\bm\pi^N_{[1:\infty)}} \left(s^{N,1}_0, \mu^{N}_0 \right)  \right] \right|\\
&\geq -\eps-0-\eps\geq -2\eps,
\end{align*}
and \eqref{eq.prop.precommittedN1} follows.}
\end{proof}

\begin{Remark}\label{rm.timeconNstationary}
	When the context is time-consistent, i.e., \eqref{eq.deltaexp} holds, by assuming the same conditions in Proposition \ref{prop.precommittedN}, a similar result holds by replacing \eqref{eq.prop.precommittedN} with
	\be\label{eq.prop.precommittedN'} 
	{\color{black}\E_{\nu}\left[J^{N,i,\bmpi^N}\left(s^{N,i}_0, \mu^{N}_0\right) \right] \geq  \sup_{\pi'\in \Pi}\E_{\nu}\left[J^{N,i,\left( \bmpi^N_{-i}, \pi'\right)} \left(s^{N,i}_0, \mu^{N}_0 \right) \right] -\eps,\quad \text{for }  1\leq i\leq N.}
	\ee 
	That is, Proposition \ref{prop.precommittedN} becomes the classical $\eps$-optimality result in the MFG literature when applying an equilibrium in Definition \ref{def.nonstationary.relax} within an $N$-agent game; see \cite{saldi2018markov} for similar results. Similar findings are also provided for continuous time finite-horizon setting, e.g., \cite{carmona2013probabilistic}.
\end{Remark}

\begin{Remark}\label{rm.nonNstationary}
	 A similar result as in Proposition \ref{prop.precommittedN} for a stationary MFG equilibrium in Definition \ref{def.stationary.relax} can be stated as follows: 
	Let $(\pihat, \muhat)\in (\Pc(U))^d\times \Pc([d])$ be a stationary equilibrium as in Definition \ref{def.stationary.relax}. For each $N\geq 2$, consider the replicating $N$-tuple $\bm{\pi}^N=(\pi^{N,1},...,\pi^{N,N})$ such that $\pi^{N,i}_{[0:\infty)}\equiv \pihat$ for all $1\leq i\leq N$. Then for any $\eps>0$, there exists $N_0$ such that for all $N\geq N_0$, if $( s^{N,i}_0 )_{1\leq i\leq N}$ are independently simulated by $\muhat$, then
	\be\label{eq.prop.precommittedNnonstationary} 
	{\color{black}\E_{\muhat}\left[J^{N,i,\bmpi^N}\left(s^{N,i}_0, \mu^{N}_0\right) \right] \geq  \sup_{\pi' \in \Pi}\E_{\muhat}\left[J^{N,i,\left( \bmpi^N_{-i}, \pi' \right)_{[0:1)}\otimes\bm\pi^N_{[1:\infty)}} \left(s^{N,i}_0, \mu^{N}_0 \right) \right] -\eps,\;\; \text{for } 1\leq i\leq N}.
	\ee 

	When the context is time-consistent, i.e., when \eqref{eq.deltaexp} is assumed, a similar result holds for a stationary equilibrium by rewriting 
	\eqref{eq.prop.precommittedNnonstationary} as 
	$$
	{\color{black}
	\E_{\muhat}\left[J^{N,i,\bmpi^N}\left(s^{N,i}_0, \mu^{N}_0\right) \right] \geq  \sup_{\pi'\in \Pi}\E_{\muhat}\left[J^{N,i,\left( \bmpi^N_{-i}, \pi'\right)} \left(s^{N,i}_0, \mu^{N}_0 \right) \right] -\eps,\quad\text{for } 1\leq i\leq N,}
    $$
 to mention a few references, see \cite{adlakha2015equilibria} and \cite{anahtarci2023q}. 
	%\be\label{eq.prop.precommittedN'}
	%\E^{\mu^*_0}\left[J^{N,1,\bmpi^N}\left(s^{N,1}_0, \mu^N_0\right) \right] \geq \sup_{\varpi \in \Pc(U)}\E^{\mu^*_0}\left[J^{N,1,(\bmpi^N_{-1}, \varpi)\otimes_{1}\pi^N} \left(s^{N,1}_0, \mu^N_0 \right) \right] -\eps,\quad \forall x\in [d].
	%\ee
\end{Remark}

\subsection{Equilibria in Definition \ref{def.nonstationary.relax} lead to precommitment approximate Nash equilibria for N-agent games}\label{subsec:explain}

%{\color{black}Similar to the notation in \eqref{eq:mfg.varpi}.}

As mentioned in Remark \ref{rm:initial.iid}, the results in Proposition \ref{prop.precommittedN} and Remark \ref{rm.timeconNstationary} (resp. the results in Remark \ref{rm.nonNstationary}) require an important condition:
\be\label{eq:precommitted.iid}
\text{	$\left( s^{N,i}_0 \right)_{1\leq i\leq N}$ are independently simulated by $\nu$ (resp. by $\muhat$),}
\ee
which plays a key role to ensure that $\mu^N_t$ converges to $\mu^{\pi,\nu}_t$ (resp. $\mu^{\pi,\hat \mu}_t$). %by Law of Large Numbers. 
{\color{black}Given a $N$ tuple $\bmpi^N=(\pi^{N,1},\cdots, \pi^{N,N})$ and $\varpi\in \Pc(U)$, denote by $((\bmpi^N_{t})_{-i}, \varpi)\otimes_1 \bm\pi^N_{[t+1:\infty)}$ the pasting $N$-tuple where agent $i$ applies $\varpi$ at step $t$ and then switches to $\pi^{N,i}$ afterward, while any other agent $j$ following strategy $\pi^{N,j}$ for $j\neq i$. Then we can further extend \eqref{eq.prop.precommittedN} to the following.}
\be\label{eq.prop.precommittedNt} 
{\color{black}\begin{aligned}
&\E_\nu\left[ J^{N,i,\bmpi^N_{[t:\infty)}}\left(s^{N,i}_t, \mu^N_t\right)\right] \\
&\geq \sup_{\varpi \in \Pc(U)}\E_\nu\left[ J^{N,i,\left(\left(\bmpi^N_{t}\right)_{-i}, \varpi\right)\otimes_1 \bm\pi^N_{[t+1:\infty)}} \left(s^{N,i}_t, \mu^N_t \right)\right] -\eps \;\;\text{for } 1\leq i\leq N,\quad \forall t\in \T.
\end{aligned}}
\ee
Here $\E_\nu$ still means the expectation is taken at time 0 before that initial values $( s^{N,i}_0 )_{1\leq i\leq N}$ are i.i.d. simulated from $\nu$. That is, \eqref{eq:precommitted.iid} is still required.
%And $\bm\pi^N_{[t:\infty)}$ is short for $\left( (\pi^{N,1}_l)_{t_1\leq l<t_2}, \cdots, (\pi^{N,N}_l)_{t_1\leq l<t_2} \right)$, which represents the segment of an $N$-tuple from $t_1$ to $t_2$ for any $0\leq t_1<t_2\leq \infty$. 

Condition \eqref{eq:precommitted.iid} is also commonly assumed among the references mentioned above, as well as in the literature, when applying an MFG equilibrium in an $N$-agent game to achieve $\eps$-optimality. And because of Condition \eqref{eq:precommitted.iid}, Proposition \ref{prop.precommittedN} and the results outlined in Remarks \ref{rm.timeconNstationary}, \ref{rm.nonNstationary} %(alongside corresponding findings in the aforementioned references) 
indicate that an MFG equilibrium in Definition \ref{def.nonstationary.relax} or Definition \ref{def.stationary.relax}, in no matter time-inconsistent or time-consistent contexts, serves as an approximate equilibrium ($\eps$-equilibrium) for the associated $N$-agent games  but only in a {\it precommitment} sense. That is to say, all agents in the N-agent game must collectively agree to adhere to $\pi$ and will not reconsider {\it after} their initial states have been simulated.

However, from the perspective of a sophisticated agent, a time-inconsistent problem is an inter-temporal game among current self and future selves. Hence, a sophisticated agent shall treat the $N$-agent game as a competition not only with other agents but also with future selves. This means that such an agent is inclined to reconsider and switch to another policy  at any time if the current policy is no longer optimal at that moment. In sum, a sophisticated agent shall look for  subgame-perfect Nash equilibrium as a time-consistent plan such that, given all future selves following this plan, there is no incentive to deviate from it in the current step. Therefore, a {\it truly} $\eps$-equilibrium for {\color{black}a sophisticated agent $i$} in the $N$-agent game shall satisfy, for $N$ big enough that
{\color{black}\be\label{eq.equi.strict0}    
\begin{aligned}
	&\E \left[ J^{N,i,\bmpi^N_{[t:\infty)}}\left(s^{N,i}_t, \mu^N_t\right) \bigg| \mu^N_t\right] \\
	&\geq  \E \left[ \sup_{\varpi \in \Pc(U)}J^{N,i,\left(\left(\bmpi^N_{t}\right)_{-i}, \varpi\right)\otimes_1 \bm\pi^N_{[t+1:\infty)}} \left(s^{N,i}_t, \mu^N_t \right) \bigg| \mu^N_t\right] -\eps \quad \forall t\in \T. 
\end{aligned}
\ee 
And a {\it truly} $\eps$-equilibrium in the $N$-agent game where all $N$ agents are sophisticated, shall satisfy for $N$ big enough that {\footnote{\color{black} In \eqref{eq.equi.strict0} and \eqref{eq.equi.strict}, we do not assume condition \eqref{eq:precommitted.iid} to be in force, so we use $\E$ instead of $\E_\nu$. And the left-hand side should be bigger or equal to the right-hand side regardless of the value of $\mu^N_t$.}}
\be\label{eq.equi.strict}    
\begin{aligned}
		&\E \left[ J^{N,i,\bmpi^N_{[t:\infty)}}\left(s^{N,i}_t, \mu^N_t\right) \bigg| \mu^N_t\right] \\
		&\geq  \E \left[ \sup_{\varpi \in \Pc(U)}J^{N,i,\left(\left(\bmpi^N_{t}\right)_{-i}, \varpi\right)\otimes_1 \bm\pi^N_{[t+1:\infty)}} \left(s^{N,i}_t, \mu^N_t \right) \bigg| \mu^N_t\right] -\eps\quad \text{for } 1\leq i\leq N, \quad \forall t\in \T,
\end{aligned}
\ee}
which is strictly stronger than both \eqref{eq.prop.precommittedN} and \eqref{eq.prop.precommittedNt}, along with the conclusions stated in Remarks \ref{rm.timeconNstationary}, \ref{rm.nonNstationary}. Regrettably, as demonstrated in the following Example \ref{eg:N.precommit}, \eqref{eq.equi.strict} is generally not satisfied by an MFG equilibrium in Definition \ref{def.nonstationary.relax} or in Definition \ref{def.stationary.relax}. 
%{\color{black} delete this sentence?:} 
As a consequence, the results in Proposition \ref{prop.precommittedN} and in Remarks \ref{rm.timeconNstationary}, \ref{rm.nonNstationary} 
%(as well as analogous findings in the references mentioned above)
 {\it do not} tell that an MFG equilibrium in Definition \ref{def.nonstationary.relax} or in Definition \ref{def.stationary.relax}, when viewed through the eyes of a sophisticated agent, is a truly $\eps$-equilibrium in the $N$-agent game. %It is like a epsilon level DDP for one further step.%To further explain, we provide the following example. 

We would like to emphasize that, as highlighted in Remark \ref{rm.explain.def}, an MFG equilibrium in Definition \ref{def.nonstationary.relax} still constitutes a subgame-perfect Nash equilibrium from the perspective of a sophisticated agent within the MFG. %since the population flow in MFG is deterministic instead of stochastic, and is not influenced by changes in one single agent's strategy.
Indeed, compare with \eqref{eq.equi.strict}, \eqref{eq.def.nonstationary1} tells that an equilibrium policy $(\pi,\mu)$ in Definition \ref{def.nonstationary.relax} for the MFG with initial population distribution $\nu$ satisfies 
$$
\E\left[J^{\pi_{[t:\infty)}}\left(x, \mu_t, \mu_{[t+1:\infty)}\right) \bigg| \mu_t \right]=	\E\left[\sup_{\varpi\in \Pc(U)}J^{\varpi \otimes_1\pi_{[t+1:\infty)}}\left( x,\mu_t, \mu_{[t+1:\infty)}\right)\bigg| \mu_t \right]\quad \forall (t,x)\in \T\times[d].
$$

\begin{Example}\label{eg:N.precommit}
	Let $d=2$ with $[2]=\{1,2\}$ and $U=[0,1]$, we set the transition probabilities as
	\be\label{eq:eg.P}  
	\begin{aligned}
		P(x,\nu,u)=\frac{1}{2}\nu+\left( \frac{1}{4}+(-1)^x\left( \frac{1}{4}-\frac{u}{2} \right), \frac{1}{4}-(-1)^x\left( \frac{1}{4}-\frac{u}{2} \right) \right), \quad \forall (x,\nu)\in \Delta, u\in U.
	\end{aligned}
	\ee 
	Set the reward function $f(t,x,\nu,u):=\delta(t)g(x,\nu,u)$ with 
	\be\label{eq:eg.g}	  
	g(x,\nu,u):=1-\left| \nu(2)u-1/4 \right|,\quad \delta(t)=\frac{1}{2}3^{-t}+\frac{1}{2}4^{-t},
	\ee 
	where $\nu(2)$ represents the second component of $\nu$. {\color{black} Then due to the non-exponential structure of $\delta(t)$, the MFG in this example is time-inconsistent. We further denote by $\bm\delta_a$ the Dirac measure concentrated at a point $a$.} Notice that the reward function is invariant of the state variable $x$, then $v^{\pi,\nu}(1,1)=v^{\pi,\nu}(1,2)$ for any $\pi\in \Pi$ and $\nu\in\Pc([d])$. Then direct calculations verify that $(\pi^{(1)}, \mu^{(1)})\in \Pi\times \Lambda$ with $\pi^{(1)}_{t}\equiv (\bm\delta_{1/2}, \bm\delta_{1/2})$ and $\mu^{(1)}_{t}\equiv (1/2,1/2)=: \nu^{(1)}$ (resp. $(\pi^{(2)}, \mu^{(2)})\in \Pi\times \Lambda$ with $\pi^{(2)}_{t}\equiv (\bm\delta_{1}, \bm\delta_{1})$ and $\mu^{(2)}_{t}\equiv (3/4,1/4)=: \nu^{(2)}$) is an equilibrium in Definition \ref{def.nonstationary.relax} for the MFG with initial distribution $\nu^{(1)}$ (resp. $\nu^{(2)}$). 
	
	{\color{black}Now we apply $(\pi^{(1)}, \mu^{(1)})$ to the corresponding $N$-agent game for big $N=4k$ with $k\in \N$. Let $\bm\pi^N=(\pi^{(1)},\cdots \pi^{(1)} )$.
	Suppose $s^{N,i}_0$ are independently generated from $\nu^{(1)}$, and all agents in the N-agent game plan to follow $\pi^{(1)}$ once their initial states are simulated. 
	%Then each agent $i$ follow the transition probabilities $P^{\bm\delta_{1/2}}(s^{N,i}_0, \nu^{(1)})=P(s^{N,i}_0,1/2,1/2)=(1/2, /12)$ no matter what the value $s^{N,i}_0$ is. That is, 
	Then, with probability $
	\frac{1}{2^{4k}} 
	\left(  
	\begin{matrix}
	4k\\k
	\end{matrix}
	\right)$ the rare event $\{ \mu^N_0=\nu^{(2)} \}$ would happen. %On the other hand,
%	$$
%	\nu^{(2)}\cdot P^{\bm\delta_{1/2}}\left( \nu^{(2)} \right)=(3/4, 1/4)\cdot\left[ \begin{matrix}
	%	5/8 & 3/8\\
	%	5/8& 3/8
%	\end{matrix} \right]=(5/8,3/8)=: \nu^{(3)},
%	$$
%	and $\nu^{(3)}$ is an invariant measure of $P^{\bm\delta_{1/2}}(\nu^{(2)})$ but not an invariant measure of $P^{\bm\delta_{1/2}}(\nu^{(3)})$. 
This suggests that, conditional on $
	\mu^N_0=\nu^{(2)}$ and assuming continued adherence of all other agents to $\pi^{(1)}$, %we observe $\mu^{N}_t\approx (5/8, 3/8)$ for $t\geq 2$ in scenarios where $N$ is big enough. , then conditional on $\mu^N_1=\nu^{(2)}$, 
	a sophisticated agent 1, instead of following $\pi^{(1)}$ without reconsideration at step 0,  would choose policy $\varpi$ by maximizing
	$$
	\E_\nu\left[ J^{N,1,\left(\left(\bmpi^N_{1}\right)_{-1}, \varpi\right)\otimes_1 \bm\pi^N_{[1:\infty)}} \left(s^{N,1}_0, \mu^N_0 \right) \bigg| \mu^N_0= \nu^{(2)}\right].
	$$
If $s^{N,1}_0=1$, for each $\varpi\in \Pc(U)$, the above pay-off equals
\be\label{eq:eg.1} 
\begin{aligned}
%&\E_\nu\left[ J^{N,1,\left(\left(\bmpi^N_{1}\right)_{-1}, \varpi\right)\otimes_1 \bm\pi^N_{[1:\infty)}} \left(s^{N,1}_0, \mu^N_0 \right) \bigg| \mu^N_0= \nu^{(2)}\right]=
&\E_\nu\left[ J^{N,1,\left(\left(\bmpi^N_{1}\right)_{-1}, \varpi\right)\otimes_1 \bm\pi^N_{[1:\infty)}} \left(1, \mu^N_0 \right) \bigg| \mu^N_0= \nu^{(2)}\right]\\
&=%\E_\nu\left[ 
\int_U \left(g(1, \nu^{(2)}, u )+\sum_{x=1,2}P(1, \nu^{(2)}, x,u )V^{N}(x) \right)\varpi(du) \\
%\bigg| \mu^N_0= \nu^{(2)}\right]\\
&= %\E_\nu\left[ 
\int_U \left(1-\left| \frac{u}{4}- \frac14 \right|+\frac u2\left( V^N(1)-V^N(2) \right)+\frac{3}{8}V^N(1)+\frac58V^N(2) \right)\varpi(du)% \bigg| \mu^N_0= \nu^{(2)}\right]
\end{aligned}
\ee 
where the last line follows from \eqref{eq:eg.P} and \eqref{eq:eg.g}, and
$$
V^N(x):=\E_\nu\left[\sum_{t=1}^\infty \delta(t) g\left(s^{N,1}_t, \mu^{N}_t, 1 \right) \bigg| s^{N,1}_1=x, s^{N,1}_0=1, \mu^N_0=\nu^{(2)}\right] \text{ for $x=1,2$}.
$$ 
Similar to \eqref{eq.precommit.Nmu0},
% conditional on $\mu^N_0=\nu^{(2)}$, 
we have 
$\E_\nu\left[\big|\mu^{N}_t\to \mu^{\pi^{(1)}, \nu^{(2)}}_t\big|\big| \mu^{N}_0=\nu^{(2)}\right] \to 0$ %in probability
for each $t\geq 1$ uniformly over $\varpi\in \Pc(U)$. Notice again that $g$ is independent of the state variable $x$. Thus, we have 
\be  
\lim_{N\to\infty} \sup_{\varpi\in \Pc(U)} |V^N(x) -M| =0 \text{ for $x=1,2$ with } M:=\sum_{t=1}^\infty \delta(t) \left(1-\left|\frac{\mu^{\pi^{(1)},\nu^{(2)}}_t(2)}{2}-\frac14 \right| \right).
\ee 
Then there exists $N_0$ such that $|V^N(1)-V^N(2)|<0.01$ and $|\frac{3}{8}V^N(1)+\frac{5}{8}V^N(2)-M|\leq 0.01$ uniformly over $\varpi\in \Pc(U)$ for all $N\geq N_0$. This together with \eqref{eq:eg.1} tells, for $N\geq N_0$, that 
\be\label{eq:eg}  
\begin{aligned}
&\sup_{\varpi\in \Pc(U)}\E_\nu\left[ J^{N,1,\left(\left(\bmpi^N_{1}\right)_{-1}, \varpi\right)\otimes_1 \bm\pi^N_{[1:\infty)}} \left(1, \mu^N_0 \right) \bigg| \mu^N_0= \nu^{(2)}\right]\\
&=\sup_{\varpi\in\Pc(U)}\int_U \left(1-\left| \frac{u}{4}- \frac14 \right|+\frac u2\left( V^N(1)-V^N(2) \right)+\frac{3}{8}V^N(1)+\frac58V^N(2) \right)d\varpi(u)\\
&\geq \int_U \left(1-\left| \frac{u}{4}- \frac14 \right|+\frac u2\left( V^N(1)-V^N(2) \right)+\frac{3}{8}V^N(1)+\frac58V^N(2) \right)\pi^{(2)}(du)\\
&\geq 1-|1/4-1/4|-0.005+M-0.01\\
&>1-|1/4-1/4|-0.005+M-0.01-0.0975=1-|1/8-1/4|+0.0025+M+0.01\\
&\geq \int_U \left(1-\left| \frac{u}{4}- \frac14 \right|+\frac u2\left( V^N(1)-V^N(2) \right)+\frac{3}{8}V^N(1)+\frac58V^N(2) \right)\pi^{(1)}(du)\\
&=\E_\nu\left[ J^{N,1, \bm\pi^N_{[0:\infty)}} \left(1, \mu^N_0 \right) \bigg| \mu^N_0= \nu^{(2)}\right].
\end{aligned}
\ee 
 If $s^{N,1}_0=2$, a similar argument also yields an integer $N_1$ exists such that
\be\label{eq:eg'}    
\begin{aligned}
	&\sup_{\varpi\in \Pc(U)}\E_\nu\left[ J^{N,1,\left(\left(\bmpi^N_{1}\right)_{-1}, \varpi\right)\otimes_1 \bm\pi^N_{[1:\infty)}} \left(2, \mu^N_0 \right) \bigg| \mu^N_0= \nu^{(2)}\right]\\
	&=\sup_{\varpi\in\Pc(U)}\int_U \left(1-\left| \frac{u}{4}- \frac14 \right|+\frac u2\left( \hat V^N(2)-\hat V^N(1) \right)+\frac{7}{8}\hat V^N(1)+\frac18\hat V^N(2) \right)d\varpi(u)\\
	&\geq \int_U \left(1-\left| \frac{u}{4}- \frac14 \right|+\frac u2\left( \hat V^N(2)-\hat V^N(1) \right)+\frac{7}{8}\hat V^N(1)+\frac18\hat V^N(2) \right)\pi^{(2)}(du)\\
	&\geq 1-|1/4-1/4|-0.005+M-0.01\\
	&>1-|1/4-1/4|-0.005+M-0.01-0.0975=1-|1/8-1/4|+0.0025+M+0.01\\
	&\geq \int_U \left(1-\left| \frac{u}{4}- \frac14 \right|+\frac u2\left( \hat V^N(2)-\hat V^N(1) \right)+\frac{7}{8}\hat V^N(1)+\frac18\hat V^N(2) \right)\pi^{(1)}(du)\\
	&= \E_\nu\left[ J^{N,1, \bm\pi^N_{[0:\infty)}} \left(2, \mu^N_0 \right) \bigg| \mu^N_0= \nu^{(2)}\right] \quad \forall N\geq N_1,
\end{aligned}
\ee 
where $\hat V^N(x):=\E_\nu\left[\sum_{t=1}^\infty \delta(t) g\left(s^{N,1}_t, \mu^{N}_t, 1 \right) \bigg| s^{N,1}_1=x, s^{N,1}_0=2, \mu^N_0=\nu^{(2)}\right]$ for $x=1,2$. By combining \eqref{eq:eg} and \eqref{eq:eg'}, we conclude that, for all $N=4k\geq N_0\vee N_1$, with probability $
\frac{1}{2^{4k}} 
\left(  
\begin{matrix}
	4k\\k
\end{matrix}
\right)$ the rare event $\{ \mu^N_0=\nu^{(2)}\}$ occurs and 
$$
\begin{aligned}
&\sup_{\varpi\in \Pc(U)} \E_\nu\left[ J^{N,1,\left(\left(\bmpi^N_{1}\right)_{-1}, \varpi\right)\otimes_1 \bm\pi^N_{[1:\infty)}} \left(s^{N,1}_0, \mu^N_0 \right) \bigg| \mu^N_0= \nu^{(2)}\right] - 0.0975\\
&\geq \E_\nu\left[ J^{N,1, \bm\pi^N_{[0:\infty)}} \left(s^{N,1}_0, \mu^N_0 \right) \bigg| \mu^N_0= \nu^{(2)}\right] . 
\end{aligned}
$$
That is, the pay-off from applying $\pi^{(1)}$ at time zero is strictly less than the maximum gain from applying other possible strategies at that moment. As a result, the sophisticated agent 1 will deviate from $\pi^{(1)}$ under this circumstance. Consequently, \eqref{eq.equi.strict} does not holds for $\left( \pi^{(1)}, \mu^{(1)} \right)$ with $t=0$ and $i=1$.}
\end{Example}

%{\color{black} combine this part with Remark \ref{rm.explain.def}.}
%That is, when applying an equilibrium $(\pi,\mu)\in \Pi\times \Lambda$ in Definition \ref{def.nonstationary.relax} in the MFG, provided that all other agents stick to $\pi$ along time horizon $\T$, then at any time $t\in \T$, the population flow after time $t$ are deterministic and equals to $(\mu_{l})_{l\geq t}$. And when a sophisticated single agent reconsiders at time $t$ by assuming all future selves following $\pi_{[t+1:\infty)}$, the best reaction is at $t$ is still $\pi(t,s_t)$. That is, the sophisticated single agent has no intent to deviate from the policy $\pi$ at any time $t$.

\section{Consistent Equilibria}\label{sec:sophisticated}
In this section, our objective is to explore a new type of MFG equilibrium, distinct from the equilibrium concept defined in Definition \ref{def.nonstationary.relax}, that can better serve as a truly consistent approximate equilibrium for a sophisticated agent in the $N$-agent game. To achieve this goal, we shall focus on strategies telling an agent how to react to any population distribution $\nu\in \Pc([d])$.
With this in mind, we define $\Pitilde$ the set containing all relaxed type (time-homogeneous) feedback controls $\pitilde$ that are Borel mappings from $\Delta$ to $\Pc(U)$.

We introduce a new equilibrium concept termed {\it consistent equilibrium} among $\Pitilde$ (see Definition \ref{def:sophisticated.ne} below) for the MFG. This type of equilibrium serves as a truly approximate equilibrium from the viewpoint of a sophisticated agent within the N-agent game (see Definition \ref{def:N.sophisticated} and Theorem \ref{thm.MFG.Neps}), thereby addressing Question \eqref{sentence:I}. Furthermore, we provide in Theorem \ref{thm.Nequi.converge} an answer to Question  \eqref{sentence:II}, stating that under suitable conditions a sequence of consistent equilibria in $N$-agent games converges up to a subsequence to a consistent equilibrium in the MFG. 
While the general existence of a consistent MFG equilibrium remains an open question and we aim to address in future studies, we offer an example study in Section \ref{subsec:example}. And we demonstrate in the example the existence of a consistent equilibrium in the MFG.
The proofs of the main results in this section, Theorem \ref{thm.MFG.Neps} and Theorem \ref{thm.Nequi.converge}, are carried out in Section \ref{subsec:proof.sophisticated}. 

\subsection{The concept of consistent MFG equilibria}
Take $\pitilde=(\pitilde(x,\nu))_{(x,\nu)\in \Delta}\in \Pitilde$, %Similar to paragraphs above and through \eqref{eq.def.J}--\eqref{eq:mu.pi} that provide the structure of the time-inconsistent MFG under a policy $\pi\in \Pi$, 
we first describe the structure of the MFG under $\pitilde$ as follows. Define $P^{\pitilde(\nu)}(\nu):= (P^{\pitilde(x,\nu)}(x,\nu))_{x\in [d]}$ as the transition matrix under $\pitilde$ and $\nu\in \Pc([d])$.
%Denote by $(X_t)_{t\in \T}$ the dynamic of the whole population. 
Provided a population distribution flow $\mu=\mu_{[0:\infty)}\in \Lambda$ with starting value $\mu_0=\nu\in \Pc([d])$, we denote by $s^{\pitilde, \mu}=\left(s^{\pitilde, \mu}_t \right)_{t\in \T}$ the dynamic of a single agent for applying a strategy $\pitilde\in \Pitilde$ under $\mu$ and is determined as follows. For each time $t$, an action value $\alpha_t$ is simulated based on $\pitilde \left( s^{\pitilde, \mu}_t, \mu_t \right)$, and then $s^{\pitilde,\mu}_{t+1}$ is determined by the transition probability vector $P \left( s_t^{\pitilde, \mu}, \mu_t, \alpha_t \right)$.
%\be\label{eq.mu.single} 
%Law(s^\pitilde_{t+1})=P(s_t^\pitilde, \mu_t, \pitilde(s_t^\pitilde, \mu_t))
%\ee
Similar to \eqref{eq.def.J}, we define, for the control $\pitilde\in \Pitilde$ and the flow $\mu\in \Lambda$ above, the expected pay-off when starting at state $x\in [d]$ as
\be\label{eq.J.relax} 
J^{\pitilde}\left(x, \nu, \mu_{[1:\infty)}\right):= \E\left[ \sum_{t=0}^\infty  f^{\pitilde\left( s^{\pitilde, \mu}_t, \mu_t \right)}\left(t, s_t^{\pitilde, \mu}, \mu_t \right) \bigg| s^{\pitilde, \mu}_0=x \right].
\ee 
Notice again that the initial value $\mu_0=\nu$ and the tail $\mu_{[1:\infty)}$ together forms the whole flow $\mu_{[0:\infty)}$.

Similar to \eqref{eq:mu.pi}, given an arbitrary $\nu\in \Pc([d])$, we denote by $\mu^{\pitilde, \nu}=\mu^{\pitilde, \nu}_{[0:\infty)}$ the population flow for all agents applying the same policy $\pitilde\in \Pitilde$, which evolves as follows.
\be\label{eq.mu.population} 
\mu^{\pitilde, \nu}_{t+1}=\sum_{x\in [d]}  \mu^{\pitilde,  \nu}_t(x)\int_U P\left( x, \mu^{\pitilde,  \nu}_t,u \right)\pitilde \left( x,\mu^{\pitilde,  \nu}_t \right)(du)=\mu^{\pitilde,  \nu}_t \cdot P^{\pitilde \left( \mu^{\pitilde,  \nu}_t \right) } ( \mu^{\pitilde,  \nu}_t ),\quad \mu^{\pitilde,  \nu}_0=\nu.
\ee 
And when all agents in the MFG apply the same relaxed policy $\pitilde\in \Pitilde$, we simply write $J^\pitilde(x,\nu)$ instead of $J^{\pitilde}\left(x, \nu,  \mu^{\pitilde,\nu}_{[1:\infty)} \right)$, since the tail $\mu^{\pitilde,\nu}_{[1:\infty)}$ described in \eqref{eq.mu.population} is fully determined by $\mu^\pitilde_0=\nu$ and $\pitilde$. 
%In the current section, we shall ignore the initial value $\nu$ in the notation  when it does not cause any confusion by doing so. When the initial value $\nu$ is important as in Section \ref{}, we will indicate it as a upper index as $\mu^{\pitilde, \nu}_t$. 
%Now we formulate a {\it sophisticated} equilibrium concept that can tell how to react to any population distribution. 
%To begin with, 
For arbitrary $\pitilde\in \Pitilde$ and $\pi\in \Pi$, we denote by $\pi\otimes_k \pitilde$ the strategy of applying $\pi$ for the first $k$ steps then switching to $\pitilde$ afterward, {\color{black} Notice that, the deviation can be taken from $\Pc(U)$ when $k=1$, due to the discrete time setting.}

\begin{Definition}\label{def:sophisticated.ne}
	We call a policy $\pitilde\in \Pitilde$ a consistent equilibrium in the MFG if
	%for a representative agent,  with any initial dsitribution $\mu_0=\mu\in \Pc([d])$, that is, for any $x\in [d], \mu^{\alpha^*}_0=\nu\in \Pc([d])$,
	\be\label{eq.def.nesingle} 
	J^{\pitilde}(x,\nu)={\color{black}\sup_{\varpi\in \Pc(U)}J^{ \varpi\otimes_1\pitilde} } \left( x,\nu, \mu^{\pitilde,\nu}_{[1:\infty)}\right)\quad \forall (x,\nu)\in \Delta. 
	\ee	
\end{Definition}

\begin{Remark}
{\color{black}Notice that the population flow on the right-hand side of \eqref{eq.def.nesingle}  is $\mu^{\tilde \pi,\nu} = \mu^{\tilde\pi, \nu}_{[0:\infty)}$. This is because only the representative agent deviates from $\pitilde$ in the MFG, and the population flow is deterministic and is not influenced by changes in the strategy of the representative agent.}
	
%Once more, it is noteworthy that the subscript $\pi\in \Pi$ can be extended to $\varpi\in \Pc(U)$ due to the discrete time setting. 
We also would like to emphasize that, compared with Condition B for the classic MFG equilibrium concept defined in Definition \ref{def.nonstationary.relax}, the condition \eqref{eq.def.nesingle} above for a consistent MFG equilibrium  is universally required across all population distributions $\nu\in \Pc([d])$. This criterion ensures that, through the application of such $\pitilde$, a single agent always knows how to react to any population distribution.
\end{Remark}

\begin{Remark}\label{rm:sophisticated.give.classic}
{\color{black} For a consistent equilibrium $\pitilde$, let $\mu_{[0:\infty)}$ represent the population flow under $\pitilde$ with an arbitrary initial value. Since the transition function $P(x,\nu,y,u)$ and $\pitilde$ are both time-homogeneous, \eqref{eq.def.nesingle} implies, for each $t\in \T$ that 
	\be\label{eq.rm4.2}  
	J^{\pitilde}\left(s_t, \mu_t, \mu_{[t+1:\infty)}\right) =\sup_{\varpi\in \Pc(U)}	J^{\varpi \otimes_1\pitilde}\left( s_t,\mu_t, \mu_{[t+1:\infty)}\right),
	\ee
	regardless of the values of $s_t$ and $\mu_t$. Here $J^{\varpi \otimes_1\pitilde}\left( s_t,\mu_t, \mu_{[t+1:\infty)}\right)$ represents the pay-off by applying $\varpi$ at $t$ and switching to $\pitilde$ thereafter. Thus, \eqref{eq.rm4.2} means that there is no incentive for the representative to deviate from $\pitilde$ at any moment $t$, given that all other agents and future selves adhere to $\pitilde$.}	
	
Moreover, the consistent equilibrium concept in Definition \ref{def:sophisticated.ne} strictly strengthens the equilibrium concept defined in Definition \ref{def.nonstationary.relax}. Indeed, suppose $\pitilde\in \Pitilde$ is a consistent equilibrium satisfying Definition \ref{def:sophisticated.ne}, then $\pitilde$ generates equilibria that satisfy Definition \ref{def.nonstationary.relax} as follows: For an arbitrary $\nu\in \Pc([d])$, define a coupled flow $\pi=\pi_{[0:\infty)}\in \Pi, \mu=\mu_{[0:\infty)}\in \Lambda$ by iterating on $t$ as
$$
\begin{cases}
	\mu_0:=\nu,\quad \pi_0(x):=\pitilde(x,\nu) \;\; \forall x\in[d];\\
	\mu_{t+1}:=\mu_t\cdot P^{\pitilde(\mu_t)}(\mu_t),\quad \pi_{t+1}(x):= \pitilde(x,\mu_{t+1}) \;\; \forall  x\in [d],\quad  \forall t\in \T. 
\end{cases}
$$
Then $(\pi, \mu)$ is an equilibrium satisfying Definition \ref{def.nonstationary.relax} for the MFG with the arbitrarily chosen initial population distribution $\nu$.
\end{Remark}

\begin{Remark}
When the context is time-consistent, i.e., \eqref{eq.deltaexp} holds, \eqref{eq.def.nesingle} implies, for all $k\in \N$ and $x\in [d]$ that 
$$\sup_{\pi\in \Pi}J^{\pi\otimes_{k+1} \pitilde}\left( x,\nu, \mu^{\pitilde, \nu}_{[1:\infty)}\right)-\sup_{\pi\in \Pi} J^{\pi\otimes_{k}\pitilde}\left( x,\nu, \mu^{\pitilde,\nu}_{[1:\infty)} \right)\leq 0. $$
As a consequence,
	$$
J^{\pitilde} \left( x,\nu, \mu^{\pitilde,\nu}_{[1:\infty)}  \right)=\sup_{\pi\in \Pi} J^{\pi}\left( x,\nu, \mu^{\pitilde,\nu}_{[1:\infty)}  \right)\quad \forall (x,\nu)\in \Delta,
	$$
	which tells  that a consistent MFG equilibrium is the time-homogeneous optimal strategy for a single agent (under population flow $\mu^{\pitilde,\nu}$) universally over all $(x,\nu)\in \Delta$.
\end{Remark}

\subsection{Revisiting the $N$-agent game}

Now we check whether or not a consistent MFG equilibrium in Definition \ref{def:sophisticated.ne} can bring satisfactory results in the $N$-agent games. We start by defining a sequential notations for policies belonging to $\Pitilde$ in the $N$-agent game. % similar to Section \ref{subsec:Ngame.notations}. 
Recall \eqref{eq.def.DeltaN}.
%Denote by $\bm{s}^N_t= \left( s^{N,1}_t,...,s^{N,N}_t \right)$ the states of all N agents at time $t$.
Let $\pitildebm^N:= (\pitilde^{N,1},...,\pitilde^{N,N})$ be an $N$-tuple policy with $\pitilde^{N,i}\in \Pitilde$ for all $1\leq i\leq N$.
Denote by $\mu^{N,\pitildebm^N, \nu}_t:= \frac{1}{N}\sum_{1\leq j\leq N} e_{s^{N,j}_t}$, for $t\in \T$, the empirical distribution under $\pitildebm^N$ with starting value $\mu^N_0=\nu$, {\color{black}where $\left( s^{N,i}_0, \nu \right)\in \Delta_N$ for each $1\leq i\leq N$}. For $t\in \T$, $s^{N,i}_{t+1}$ is determined by
$
P\left(s^{N,i}_t, \mu^{N,\pitildebm^N,\nu}_t, \alpha^{N,i}_t\right)
$, where $\alpha^{N,i}_t$ are mutually independent random variables generated from $\pitilde^{N,i} \left( s_t^{N,i}, \mu_t^{N,\pitildebm^N,\nu} \right)$. Then the law of the stochastic flow $\left(\mu^{N, \bm\pitilde^N,\nu}_t\right)_{t\geq 1}$ is determined by $\pitildebm^N$ and initial empirical distribution $\nu$. Take $(x,\nu)\in \Delta_N$, for above $\bm \pitilde^N$ {\color{black}we introduce the pay-off for agent $i$ % is applied %following counterparts (for agent 1) of \eqref{eq.J.relax} and \eqref{eq.V.relax} 
%in the N-agent game, 
as}
\be\label{eq.def.JNagentpitilde}  
{\color{black}\begin{aligned}
	J^{N,i, \pitildebm^N}(x, \nu):=& \E\left[ \sum_{t=0}^\infty  f^{\pitilde^{N,i}\left( s^{N,i}_t, \mu_t^{N,\pitildebm^N,\nu} \right) }\left(t, s^{N,i}_t, \mu^{N,\pitildebm^N,\nu}_t \right) \bigg| s^{N,i}_0 = x, \mu^N_0=\nu \right].\\
%		J^{N,1, \pitildebm^N}(x, \nu):=&	V^{N,1, \pitildebm^N}(0,x, \nu).
\end{aligned}}
\ee
%Similar to the statement below \eqref{eq.def.JNagent}, the value functions $V^{N,1, \pitildebm^N}(t,x, \nu)$ above is determined by the $N$-tuple feedback policy, the initial values $(x,\nu)$, so there is also no need to indicate the flow $\left( \mu^{N,\bm\pitilde^N,\nu}_{t} \right)_{t\geq 1}$ in the notations. 
{\color{black}For an $N$-tuple $\pitildebm^N$ and $\varpi\in\Pc(U)$, we denote by $\left( \pitildebm^N_{-i}, \varpi \right) \otimes_{1}\pitildebm^N$ the new $N$-tuple where agent $i$ applies $\varpi$ for one step then switches to $\pitilde^{N,i}$ afterward while each other agent $j$ follows $\pitilde^{N,j}$ for $j\neq i$.}
%denote by $(\pitildebm^N_{-i}, \pitilde')$ the new $N$-tuple by replacing the $i$-th component $\pitilde^{N,i}$ with another policy $\pitilde'\in \Pitilde$. Given two $N$-tuples $\pitildebm^N=(\pitilde^{N,1},...,\pitilde^{N,N})$ and ${\pitildebm}^N=({\bar \pitilde}^{N,1},...,{\bar \pitilde}^{N,N})$, denote by ${\bar \pitildebm}^N\otimes_1\pitildebm^N$ as the new $N$-tuple such that $i$-th agent applies ${\bar \pitilde}^{N,i}$ at time 0 then applies $\pitilde^{N,i}$ from time 1 onward, for all $1\leq i\leq N$.
Now we introduce the \textit{consistent} $\eps$-equilibrium for time inconsistent $N$-agent games as follows.

\begin{Definition}\label{def:N.sophisticated}
	Given $\eps\geq 0$, we call an $N$-tuple $\pitildebm^N=(\pitilde^{N,1},...,\pitilde^{N,N})$ with $\pitilde^{N,i}\in \Pitilde$ for each $1\leq i\leq N$ a consistent $\eps$-equilibrium in the $N$-agent game if 
{\color{black}	\be\label{eq:def.Nsophisticated}  
	J^{N,i,\pitildebm^N}(x, \nu)\geq \sup_{\varpi\in \Pc(U)}J^{N,i,\left( \pitildebm^N_{-i}, \varpi \right) \otimes_{1}\pitildebm^N}(x, \nu)-\eps \quad \forall (x, \nu)\in \Delta_N, \quad \text{for } 1\leq i\leq N.
	\ee }
When $\eps=0$, we simply say a consistent equilibrium instead of a consistent $0$-equilibrium.
\end{Definition}

\begin{Remark}\label{rm:sophisticatedN}
{\color{black}Consider a consistent $\eps$-equilibrium $\pitildebm^N$ in Definition \ref{def:N.sophisticated}. Take any $t\in \T$, condition \eqref{eq:def.Nsophisticated} implies for all $1\leq i\leq N$ that %\footnote{\color{black} Here the notation $\mu^N_t$ is applied instead of  $\mu^{N, \pitildebm^N,\nu}_t$, since we do not require the history of the empirical distribution flow nor its value at $t=0$, and $\mu^N_t$ can take any possible values.}
\bee
\begin{aligned}
	&\E\left[ J^{N,i,\bm\pitilde^N_{[t:\infty)}}\left(s^{N,i}_t, \mu^N_t\right) \bigg| s^{N,i}_t =x, \mu^N_t=\nu  \right] \\
	&\geq \E\left[ \sup_{\varpi \in \Pc(U)}J^{N,i,\left(\left(\bm\pitilde^N_{t}\right)_{-i}, \varpi\right)\otimes_{1} \bm\pitilde^N_{[t+1:\infty)}} \left(s^{N,i}_t, \mu^N_t \right) \bigg|  s^{N,i}_t =x , \mu^N_t=\nu\right] -\eps\quad \forall (x,\nu)\in \Delta_N,
\end{aligned}
\eee
which is exactly the criterion proposed in \eqref{eq.equi.strict}. 
This signifies that, for each $1\leq i\leq N$, whenever agent $i$ reconsiders,} deviating from $\pitilde$ always results in no superior payoff within an $\eps$-level, given that each other agent $j$ adhere to $\pitilde^{N,j}$. Therefore, $\pitildebm^N$ is a truly approximate equilibrium from the perspective of a sophisticated agent within the $N$-agent game. 

{\color{black} We also want to mention that a consistent equilibrium in an $N$-agent game guides all agents to react to the current empirical distribution, thus one agent's decision influences the empirical distribution, which in turn affects the decisions of other agents.}
\end{Remark}

%{\bf cite \eqref{eq.equi.strict} when define the strict N-agent game equilibrium.}

%Our notation has been studied in \cite{adlakha2015equilibria}, but only an introduction, then the paper switches to the stationary type in Definition \ref{def.stationary.relax} for existence and $\eps$-approximation.

%{\bf add a remark for time-consistent counterparts.}

Now we upgrade the uniform continuity in Assumption \ref{assum:continuous} to the following uniformly Lipschitz assumption.
\begin{Assumption}\label{assum.lipschitz}
	%Suppose 
	%$f(0,x,\mu,u), P(x, y,\mu,u)$ are uniformly Lipschitiz continuous. That is, 
	There exists positive constants $L_1, L_2$ such that
	\begin{align}
		\sup_{t\in \T, (x,\nu)\in \Delta} \left( |f(t,x,\nu,u)-f(t,x,\nu,\bar u)|+|P(x,\nu,u)-P(x,\nu,\bar u)| \right)\leq L_1 |u-\bar u| \label{eq.assum3.0}\\
		\sup_{t\in \T, x\in[d], u\in U} \left( |f(t,x,\nu,u)-f(t,x,\bar \nu,u)|+|P(x,\nu,u)-P(x,\bar \nu,u)| \right)\leq L_2|\nu-\bar\nu|. \label{eq.assum3.1}
	\end{align}
\end{Assumption}

From now on, we focus on the following subset of $\Pitilde$, which contains all policies that are weakly continuous on $\nu$.
\be\label{eq.def.Pic} 
\Pitilde_c:= \left\{ \pitilde\in \Pitilde: \text{$\nu\mapsto \pi(x,\nu): \left(\Pc([d]), |\cdot|\right) \mapsto \left(\Pc(U), \Wc_1\right)$, is continuous, for each $x\in [d]$}  \right\}.
\ee 

Now we provide the main results in this section.  Theorem \ref{thm.MFG.Neps} states that a consistent MFG equilibrium in Definition \ref{def:sophisticated.ne} that belongs to $\Pitilde_c$ is a consistent $\eps$-equilibrium for the $N$-agent game when $N$ is sufficiently large. Theorem \ref{thm.Nequi.converge} concerns the convergence of consistent equilibria in $N$-agent games to a consistent equilibrium in the MFG.
\begin{Theorem}\label{thm.MFG.Neps}
	Suppose Assumptions \ref{assum.bound}, \ref{assum.lipschitz} hold. Let $\pitilde\in \Pitilde_c$ be a consistent MFG equilibrium as defined in Definition \ref{def:sophisticated.ne}. Then for any $\eps>0$, there exists $N_0$ such that for all $N\geq N_0$, the replicating $N$-tuple $\pitildebm^N=(\pitilde,...,\pitilde)$ is a consistent $\eps$-equilibrium in Definition \ref{def:N.sophisticated} for the $N$-agent game.
\end{Theorem}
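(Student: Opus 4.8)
The plan is to fix $\eps > 0$, the consistent MFG equilibrium $\pitilde \in \Pitilde_c$, the replicating $N$-tuple $\pitildebm^N = (\pitilde,\dots,\pitilde)$, and a reference agent, say agent $1$ (all agents are symmetric under the replicating tuple). We must show that for $N$ large and every $(x,\nu)\in\Delta_N$,
\be\label{eq:plan.target}
J^{N,1,\pitildebm^N}(x,\nu) \geq \sup_{\varpi\in\Pc(U)} J^{N,1,(\pitildebm^N_{-1},\varpi)\otimes_1\pitildebm^N}(x,\nu) - \eps.
\ee
The key idea is a propagation-of-chaos/coupling argument: when agent $1$ plays $\varpi$ at step $0$ and then reverts to $\pitilde$ while everyone else uses $\pitilde$, the empirical flow $\mu^{N}_{[0:\infty)}$ should stay close (in expectation, uniformly over $\varpi\in\Pc(U)$ and over the starting point in $\Delta_N$) to the deterministic MFG flow $\mu^{\pitilde,\nu}_{[0:\infty)}$ started at $\nu$ — because a single agent's one-step deviation has $O(1/N)$ influence on the empirical measure. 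Once this is established, the $N$-agent pay-off of agent $1$ under the deviation is close to the MFG pay-off $J^{\varpi\otimes_1\pitilde}(x,\nu,\mu^{\pitilde,\nu}_{[1:\infty)})$, and similarly $J^{N,1,\pitildebm^N}(x,\nu)$ is close to $J^{\pitilde}(x,\nu)$. Then \eqref{eq:plan.target} follows from the defining inequality \eqref{eq.def.nesingle} of a consistent MFG equilibrium, with room to spare for the two approximation errors.

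The steps, in order, would be: (1) Show that for each fixed $t\in\T$,
\be\label{eq:plan.flow}
\sup_{\varpi\in\Pc(U)}\ \sup_{(x,\nu)\in\Delta_N}\ \E\!\left[\,\big|\mu^{N}_t - \mu^{\pitilde,\nu}_t\big|\ \big|\ s^{N,1}_0 = x,\ \mu^N_0 = \nu\right] \to 0 \quad\text{as } N\to\infty,
\ee
by induction on $t$, exactly as in the proof of Proposition \ref{prop.precommittedN} (base case from the deterministic start $\mu^N_0 = \nu$; inductive step by splitting into three terms — the effect of replacing $\mu^N_t$ by $\mu^{\pitilde,\nu}_t$ inside the transition, controlled by the Lipschitz bound \eqref{eq.assum3.1}; a conditional law-of-large-numbers term controlled by an estimate in the spirit of \cite[Lemma A.2]{budhiraja2015long}; and the induction term — with the extra observation, as in Proposition \ref{prop.precommittedN}, that agent $1$'s deviation at step $0$ contributes only $O(1/N)$ to $\mu^N_1$, so the bound is uniform in $\varpi$). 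Here the continuity $\pitilde\in\Pitilde_c$ is used to make $\nu\mapsto P^{\pitilde(\nu)}(\nu)$ continuous so the deterministic flow behaves well. (2) Using \eqref{eq:plan.flow}, Assumption \ref{assum.bound} to truncate the infinite horizon at some $T$ with tails $<\eps/4$, and the Lipschitz estimates \eqref{eq.assum3.0}–\eqref{eq.assum3.1} together with the finiteness of $[d]$, show that
\be\label{eq:plan.payoff}
\sup_{\varpi\in\Pc(U)}\ \sup_{(x,\nu)\in\Delta_N}\Big|\E\big[J^{N,1,(\pitildebm^N_{-1},\varpi)\otimes_1\pitildebm^N}(x,\nu)\big] - J^{\varpi\otimes_1\pitilde}\big(x,\nu,\mu^{\pitilde,\nu}_{[1:\infty)}\big)\Big| \to 0,
\ee
and likewise $\sup_{(x,\nu)\in\Delta_N}\big|J^{N,1,\pitildebm^N}(x,\nu) - J^{\pitilde}(x,\nu)\big|\to 0$. (3) Conclude: pick $N_0$ so that for $N\geq N_0$ both suprema in step (2) are $<\eps/2$; then for any $(x,\nu)\in\Delta_N$,
\be\label{eq:plan.conclude}
J^{N,1,\pitildebm^N}(x,\nu) \geq J^{\pitilde}(x,\nu) - \tfrac{\eps}{2} = \sup_{\varpi}J^{\varpi\otimes_1\pitilde}(x,\nu,\mu^{\pitilde,\nu}_{[1:\infty)}) - \tfrac{\eps}{2} \geq \sup_{\varpi}J^{N,1,(\pitildebm^N_{-1},\varpi)\otimes_1\pitildebm^N}(x,\nu) - \eps,
\ee
which is precisely \eqref{eq:def.Nsophisticated} for the replicating tuple, and by symmetry the same holds for every agent $i$.

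The main obstacle is obtaining the \emph{uniformity} in \eqref{eq:plan.flow} and \eqref{eq:plan.payoff} — uniform both over the deviation $\varpi\in\Pc(U)$ and over all admissible starting configurations $(x,\nu)\in\Delta_N$ — since $\Delta_N$ changes with $N$ and the conditioning event $\{s^{N,1}_0 = x,\ \mu^N_0 = \nu\}$ pins down the whole initial profile only up to a permutation. The reason uniformity is attainable is that the one-step transition kernels are globally Lipschitz by Assumption \ref{assum.lipschitz} (uniformly in $t,x$ and in $\nu$), so the constants in the inductive flow estimate do not depend on the starting point, and the law-of-large-numbers fluctuation term is bounded by a quantity that is $O(N^{-1/2})$ uniformly; the deviating action of a single agent only ever enters through a $1/N$-weighted empirical average, so it cannot spoil the uniformity in $\varpi$. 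Care is needed to phrase the conditional LLN estimate so that it holds simultaneously for all deterministic initial profiles consistent with $(x,\nu)\in\Delta_N$, but this is a routine adaptation of the unconditional argument used in Proposition \ref{prop.precommittedN}.
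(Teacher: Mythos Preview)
Your proposal is correct and follows the same propagation-of-chaos strategy as the paper. The paper packages your steps (1)--(2) into Lemmas \ref{prop.lln.Nmu}--\ref{cor.WN} via a one-step decomposition: after writing $J^{N,1,(\pitildebm^N_{-1},\varpi)\otimes_1\pitildebm^N}(x,\nu)$ and $J^{\varpi\otimes_1\pitilde}(x,\nu,\mu^{\pitilde,\nu}_{[1:\infty)})$ each as a first-step term plus a continuation value (formulas \eqref{eq.N.Jpasting} and \eqref{eq.mfgV.pivarpi}), the first-step $f^\varpi$ terms cancel and uniformity in $\varpi$ comes for free, so only the uniform convergence $W^{N,\pitilde}\to V^{\pitilde}$ is needed; your step (3) is then exactly the paper's concluding chain of inequalities \eqref{eq.prop.3}--\eqref{eq.prop.5}.
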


\begin{Theorem}\label{thm.Nequi.converge}
	Suppose Assumptions \ref{assum.bound}, \ref{assum.lipschitz} hold. Let $\left( \pitilde^N \right)_{N\in \N\cup\{\infty\}}$ be a sequence in $\Pitilde_c$ such that, 
	\be\label{eq.thm.Nequi1} 
	\lim_{N\to\infty}\sup_{(x,\nu)\in \Delta} \Wc_1 \left( \pitilde^N(x,\nu), \pitilde^\infty(x,\nu) \right)=0.
	\ee 
	 If, for each $N\in \N$, the replicating $N$-tuple $\pitildebm^N=(\pitilde^N,\cdots, \pitilde^N)$ is a consistent equilibrium in Definition \ref{def:N.sophisticated} for the $N$-agent game, then $\pitilde^\infty$ a consistent MFG equilibrium in Definition \ref{def:sophisticated.ne}.
\end{Theorem}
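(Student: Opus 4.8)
The plan is to pass to the limit $N\to\infty$ in the consistent $N$-agent equilibrium inequality \eqref{eq:def.Nsophisticated} and thereby recover the consistent MFG equilibrium identity \eqref{eq.def.nesingle} for $\pitilde^\infty$. Fix an arbitrary $(x,\nu)\in\Delta$. Since $\Delta_N$ is a lattice, first pick for each large $N$ a point $(x,\nu^N)\in\Delta_N$ with $\nu^N\to\nu$ (round $N\nu$ to an integer vector summing to $N$ while keeping the $x$-th entry positive). Applying \eqref{eq:def.Nsophisticated} to the replicating tuple $\pitildebm^N=(\pitilde^N,\dots,\pitilde^N)$ and agent $1$ at $(x,\nu^N)$ gives
\[
J^{N,1,\pitildebm^N}(x,\nu^N)\;\geq\;\sup_{\varpi\in\Pc(U)}J^{N,1,\left(\pitildebm^N_{-1},\varpi\right)\otimes_1\pitildebm^N}(x,\nu^N).
\]
It then suffices to prove that the left-hand side converges to $J^{\pitilde^\infty}(x,\nu)$ and the right-hand side to $\sup_{\varpi\in\Pc(U)}J^{\varpi\otimes_1\pitilde^\infty}\bigl(x,\nu,\mu^{\pitilde^\infty,\nu}_{[1:\infty)}\bigr)$; the reverse of \eqref{eq.def.nesingle} is immediate on taking $\varpi=\pitilde^\infty(x,\nu)$, so equality follows, and since $(x,\nu)$ is arbitrary, $\pitilde^\infty$ is a consistent MFG equilibrium.

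The core is a propagation-of-chaos estimate, carried out essentially as in the proof of Proposition \ref{prop.precommittedN} (and of Theorem \ref{thm.MFG.Neps}). Write $\mu^N_{[0:\infty)}$ for the empirical flow under $\left(\pitildebm^N_{-1},\varpi\right)\otimes_1\pitildebm^N$ started from the deterministic value $\mu^N_0=\nu^N$. One shows by induction on $t$ that
\[
\lim_{N\to\infty}\;\sup_{\varpi\in\Pc(U)}\E\!\left[\,\bigl|\mu^N_t-\mu^{\pitilde^\infty,\nu}_t\bigr|\,\right]=0,\qquad t\in\T .
\]
In the inductive step, the increment $\mu^N_{t+1}-\mu^{\pitilde^\infty,\nu}_{t+1}$ splits into a martingale/fluctuation term of order $N^{-1/2}$, vanishing by the law of large numbers (cf.\ \cite[Lemma A.2]{budhiraja2015long}), and a drift term $\mu^N_t\cdot P^{\pitilde^N(\mu^N_t)}(\mu^N_t)-\mu^{\pitilde^\infty,\nu}_t\cdot P^{\pitilde^\infty(\mu^{\pitilde^\infty,\nu}_t)}(\mu^{\pitilde^\infty,\nu}_t)$, controlled using the induction hypothesis together with: the Lipschitz bounds \eqref{eq.assum3.0}, \eqref{eq.assum3.1}; the uniform convergence \eqref{eq.thm.Nequi1}, which via \eqref{eq.cha.W1} and \eqref{eq.assum3.0} controls $\bigl|P^{\pitilde^N(y,\nu)}(y,\nu)-P^{\pitilde^\infty(y,\nu)}(y,\nu)\bigr|$ by a constant multiple of $\Wc_1\bigl(\pitilde^N(y,\nu),\pitilde^\infty(y,\nu)\bigr)$; and the $\nu$-continuity of $\pitilde^\infty\in\Pitilde_c$ (uniform continuity on the compact $\Pc([d])$). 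Since agent $1$'s step-$0$ deviation $\varpi$ perturbs $\mu^N_1$ by only $O(1/N)$, the estimate is uniform in $\varpi$, and $\nu^N\to\nu$ absorbs the initial discrepancy. From here, as in the cited proofs, the single-agent chain $s^{N,1}_t$ converges in distribution, uniformly over $\varpi\in\Pc(U)$, to the mean-field chain $z^{\varpi\otimes_1\pitilde^\infty}_t$ driven by $\varpi$ at step $0$ and by $\pitilde^\infty$ thereafter under the deterministic flow $\mu^{\pitilde^\infty,\nu}$.

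Granting this, the value-function convergence is routine: Assumption \ref{assum.bound} truncates both payoff series at a common horizon $T$ up to an error uniform in $N$ and $\varpi$, and each of the finitely many remaining terms converges by the trajectory convergence above and the (Lipschitz, hence uniform) continuity of $f$, again uniformly over $\varpi$. Hence $J^{N,1,\pitildebm^N}(x,\nu^N)\to J^{\pitilde^\infty}(x,\nu)$ and $\sup_{\varpi}J^{N,1,\left(\pitildebm^N_{-1},\varpi\right)\otimes_1\pitildebm^N}(x,\nu^N)\to\sup_{\varpi}J^{\varpi\otimes_1\pitilde^\infty}\bigl(x,\nu,\mu^{\pitilde^\infty,\nu}_{[1:\infty)}\bigr)$, the interchange of $\sup_\varpi$ and $\lim_N$ being legitimate precisely because the convergence is uniform over $\varpi$. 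Passing to the limit in the displayed inequality yields \eqref{eq.def.nesingle}, completing the proof.

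The main obstacle is the joint, ``diagonal'' character of the limit: the finite population must grow and the feedback policy $\pitilde^N$ must drift to $\pitilde^\infty$ simultaneously, while every estimate must stay uniform over the first-step deviation $\varpi\in\Pc(U)$ so that $\sup_\varpi$ commutes with $\lim_N$. The hypothesis \eqref{eq.thm.Nequi1} together with the Lipschitz structure of Assumption \ref{assum.lipschitz} is exactly what lets the propagation-of-chaos induction absorb the additional $\pitilde^N\to\pitilde^\infty$ error; a secondary nuisance is that $\Delta_N$ is discrete, so a generic $\nu\in\Pc([d])$ has to be reached through empirically realizable distributions $\nu^N$.
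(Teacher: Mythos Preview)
Your proposal is correct and follows essentially the same strategy as the paper: approximate $\nu$ by a point in $\Delta_N$ (the paper's $\nu(N,x)$), establish propagation of chaos for the empirical flow (Lemma~\ref{prop.lln.Nmu}), deduce value-function convergence via truncation (Lemmas~\ref{prop.VN}--\ref{prop.WN} and Remark~\ref{rm:new}), and pass to the limit in the $N$-agent equilibrium inequality. The paper packages the argument through the auxiliary functions $V^{N,\pitilde}$, $W^{N,\pitilde}$ and the one-step decomposition \eqref{eq.N.Jpasting}, and takes the limit for each fixed $\varpi$ before applying $\sup_\varpi$---so the uniform-in-$\varpi$ convergence you invoke is not strictly needed---but the underlying analysis is the same as what you sketch.
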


%The proofs for Theorems \ref{thm.MFG.Neps}, \ref{thm.Nequi.converge} are detailed in Section \ref{}. 
A quick corollary of Theorem \ref{thm.Nequi.converge} is the following.

\begin{Corollary}\label{cor.Nequi.converge}
	Suppose Assumptions \ref{assum.bound}, \ref{assum.lipschitz} hold. Let $\left( \pitilde^N \right)_{N\in \N}$ be a sequence in $\Pitilde_c$ such that, for each $x$, $(\pitilde^N(x,\mu))_{N\in \N}$ is uniformly equi-continuous as functionals from $(\Pc([d]), |\cdot|)$  to $(\Pc(U), \Wc_1)$. If, for each $N$, the replicating $N$-tuple $\pitildebm^N=(\pitilde^N,\cdots, \pitilde^N)$ is a consistent equilibrium in Definition \ref{def:N.sophisticated} for the $N$-agent game, then any subsequence limit of $\left( \pitilde^N \right)_{N\in \N}$ is a consistent MFG equilibrium in Definition \ref{def:sophisticated.ne}.
\end{Corollary}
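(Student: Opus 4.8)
The plan is to read the corollary off directly from Theorem \ref{thm.Nequi.converge}: the equi-continuity hypothesis is present precisely to produce, through an Arzel\`a--Ascoli argument, subsequences of $(\pitilde^N)_{N\in\N}$ that converge uniformly on $\Delta$, and each such limit then lands within the scope of that theorem.

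First I would record the compactness. Since $[d]$ carries the discrete metric and is finite, a policy $\pitilde\in\Pitilde_c$ is exactly the finite tuple $(\pitilde(x,\cdot))_{x\in[d]}$ of continuous maps $\nu\mapsto\pitilde(x,\nu)$ from the $(d-1)$-simplex $(\Pc([d]),|\cdot|)$ --- which is compact --- into $(\Pc(U),\Wc_1)$; and by \eqref{eq.cha.W1} together with the boundedness of $U$, the target $(\Pc(U),\Wc_1)$ is itself a compact metric space. For each fixed $x$, the family $\{\pitilde^N(x,\cdot)\}_{N\in\N}$ is equi-continuous by hypothesis and is automatically pointwise relatively compact because the target is compact; hence, by the Arzel\`a--Ascoli theorem, it is relatively compact in $C\big((\Pc([d]),|\cdot|);(\Pc(U),\Wc_1)\big)$ endowed with the uniform metric. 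Performing finitely many successive extractions over $x\in[d]$, I would conclude that any subsequence of $(\pitilde^N)_{N\in\N}$ admits a further subsequence $(\pitilde^{N_k})_k$ and a policy $\pitilde^\infty$ with $\sup_{(x,\nu)\in\Delta}\Wc_1\big(\pitilde^{N_k}(x,\nu),\pitilde^\infty(x,\nu)\big)\to0$; and, a uniform limit of maps continuous (hence Borel) in $\nu$ being again continuous in $\nu$, one has $\pitilde^\infty\in\Pitilde_c$. In particular, subsequential limits of $(\pitilde^N)_{N\in\N}$ do exist, so the statement is not vacuous.

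Next I would invoke Theorem \ref{thm.Nequi.converge}. Let $\pitilde^\infty$ be any subsequential limit, realised through a subsequence $(N_k)_k$ along which the convergence just displayed --- that is, \eqref{eq.thm.Nequi1} --- holds. For every $k$, the replicating $N_k$-tuple $\pitildebm^{N_k}=(\pitilde^{N_k},\dots,\pitilde^{N_k})$ is a consistent equilibrium in the $N_k$-agent game by assumption. The argument proving Theorem \ref{thm.Nequi.converge} is a passage to the limit as the number of agents tends to $\infty$ and uses nothing about that number beyond its divergence, so it applies verbatim to the sequence $(\pitilde^{N_k})_{k\in\N}$ together with $\pitilde^\infty$ and yields that $\pitilde^\infty$ is a consistent MFG equilibrium in the sense of Definition \ref{def:sophisticated.ne}. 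As the subsequence was arbitrary, every subsequential limit of $(\pitilde^N)_{N\in\N}$ is a consistent MFG equilibrium, which is the claim.

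I do not expect a genuine obstacle here: the only points needing care are that $(\Pc(U),\Wc_1)$ is a compact metric space (so that Arzel\`a--Ascoli runs with a compact target and pointwise precompactness comes for free), that the posited equi-continuity is already uniform over the finite set $[d]$, and that Theorem \ref{thm.Nequi.converge} --- being a limit-as-$N\to\infty$ statement --- is insensitive to replacing $\N$ by a subsequence. Each of these is bookkeeping, so the corollary is essentially immediate once Theorem \ref{thm.Nequi.converge} is in hand.
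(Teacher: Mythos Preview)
Your proposal is correct and follows essentially the same approach as the paper: invoke Arzel\`a--Ascoli (using compactness of $\Pc([d])$ and of $(\Pc(U),\Wc_1)$) to extract uniformly convergent subsequences landing in $\Pitilde_c$, and then apply Theorem~\ref{thm.Nequi.converge} along the subsequence. Your write-up is in fact more thorough than the paper's, in that you spell out why Arzel\`a--Ascoli applies with a compact target, why the limit lies in $\Pitilde_c$, and why Theorem~\ref{thm.Nequi.converge} transfers to a subsequence indexed by $(N_k)_k$.
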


\begin{proof}
	By the equi-continuity of $(\pitilde^N)_{N\in \N}$ and compactness of $(\Pc([d]), |\cdot|)$ and $(\Pc(U), \Wc_1)$, we can apply Arzel\`a–Ascoli theorem to reach that any subsequence of $(\pitilde^N)_{N\in \N}$ has a convergent subsequence $(\pitilde^{N_k})_{k\in \N}$ with a limit $\pitilde^\infty$ in $\Pitilde_c$, and
	$$
	\lim_{k\to\infty}\sup_{(x,\nu)\in \Delta} \Wc_1 \left( \pitilde^{N_k}(x,\nu), \pitilde^\infty(x,\nu) \right)=0.
	$$
	Then Theorem \ref{thm.Nequi.converge} shows that $\pitilde^\infty$ is a consistent MFG equilibrium in Definition \ref{def:sophisticated.ne}.
\end{proof}

\subsection{An example analysis for the existence of continuous consistent MFG equilibria}\label{subsec:example}

We first provide a characterization result for a policy $\pitilde\in \Pitilde$ to be a consistent MFG equilibrium in Definition \ref{def:sophisticated.ne}. To begin with, for $\pitilde\in \Pitilde$,  we define the following auxiliary value function
\be\label{eq.V.relax} 
V^{\pitilde}\left(x, \nu\right):= \E\left[ \sum_{t=0}^\infty   f^{\pitilde \left( s^{\pitilde,\mu^{\pitilde,\nu}}_t, \mu^{\pitilde,\nu}_t \right)}\left(1+t, s_t^{\pitilde, \mu^{\pitilde,\nu}}, \mu^{\pitilde,\nu}_t \right) \Bigg | s^{\pitilde, \mu^{\pitilde,\nu}}_0=x\right].
\ee 
%And we simply write $V^{\pitilde}(t,x, \nu)$ short for $V^{\pitilde}\left(t,x, \nu, \mu_{[1:\infty)} \right)$ if all agents apply the same $\pitilde\in \Pitilde$, since $\mu_{[1:\infty)}$ is fully determined by staring population distribution $\nu$ and control policy $\pitilde$. 
%Then $J^{\pitilde}(x,\nu)=V^{\pitilde}\left(0, x, \nu\right)$. 
\begin{Proposition}\label{prop.relax.cha}
	Suppose Assumption \ref{assum.bound} hold, then $\pitilde\in \Pitilde$ is a consistent MFG equilibrium in Definition \ref{def:sophisticated.ne} if and only if 
	\be\label{eq.prop.charelax1}  
	\supp (\pitilde (x,\nu))\subset \argmax_{u\in U} \left\{ f(0,x,\nu,u)+P(x,\nu,u)\cdot V^{\pitilde}\left( \cdot, \nu\cdot P^{\pitilde(\nu)}(\nu)\right) \right\} \quad  \forall (x,\nu)\in \Delta.
	\ee
\end{Proposition}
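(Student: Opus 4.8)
The plan is to reduce the equilibrium condition \eqref{eq.def.nesingle} to a one-step dynamic-programming identity and then invoke the elementary fact that a probability measure integrates a bounded continuous function to its maximum precisely when it is concentrated on the corresponding argmax set. Concretely, fix $(x,\nu)\in\Delta$ and write $\nu':=\nu\cdot P^{\pitilde(\nu)}(\nu)=\mu^{\pitilde,\nu}_1$. The key step is to establish, for every $\varpi\in\Pc(U)$, the identity
\be\label{eq.plan.keystep}
J^{\varpi\otimes_1\pitilde}\left(x,\nu,\mu^{\pitilde,\nu}_{[1:\infty)}\right)=\int_U\left[f(0,x,\nu,u)+P(x,\nu,u)\cdot V^{\pitilde}(\cdot,\nu')\right]\varpi(du).
\ee
To prove \eqref{eq.plan.keystep} I would split the left-hand payoff into the time-$0$ reward $f^{\varpi}(0,x,\nu)=\int_U f(0,x,\nu,u)\,\varpi(du)$ plus the continuation accrued from time $1$ onward; conditioning on the time-$1$ state $y$, which is reached with probability $P^{\varpi}(x,\nu,y)=\int_U P(x,\nu,y,u)\,\varpi(du)$, turns the continuation into $\sum_{y\in[d]}P^{\varpi}(x,\nu,y)\,W(y)$, where $W(y)$ denotes the expected reward collected from time $1$ on by a single agent that follows $\pitilde$ under the tail flow $\mu^{\pitilde,\nu}_{[1:\infty)}$ and sits at $y$ at time $1$.

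The substance of the argument is the claim $W(y)=V^{\pitilde}(y,\nu')$, which rests on time-homogeneity used twice. First, by the recursion \eqref{eq.mu.population} the shifted flow $(\mu^{\pitilde,\nu}_{1+s})_{s\in\T}$ coincides with $\mu^{\pitilde,\nu'}_{[0:\infty)}$, so the continuation chain started at $y$ at time $1$ is exactly the chain governed by $\pitilde$ under population flow $\mu^{\pitilde,\nu'}$ started at $y$. Second, reindexing continuation time as $s=t-1\ge 0$, the reward collected at continuation step $s$ carries the time argument $1+s$, which is precisely the index appearing in the definition \eqref{eq.V.relax} of $V^{\pitilde}$. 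Hence $W(y)=V^{\pitilde}(y,\nu')$, and pulling the $\varpi$-integral outside the finite sum over $y\in[d]$ yields \eqref{eq.plan.keystep}.

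With \eqref{eq.plan.keystep} in hand the proof closes quickly. Taking $\varpi=\pitilde(x,\nu)$ in \eqref{eq.plan.keystep} gives $J^{\pitilde}(x,\nu)=\int_U\psi_{x,\nu}(u)\,\pitilde(x,\nu)(du)$, where $\psi_{x,\nu}(u):=f(0,x,\nu,u)+P(x,\nu,u)\cdot V^{\pitilde}(\cdot,\nu')$. By Assumption \ref{assum.bound}, $V^{\pitilde}(\cdot,\nu')$ is a finite vector in $\R^d$, so together with the continuity of $f,P$ in $u$ the function $\psi_{x,\nu}$ is continuous on the compact set $U$; consequently $\max_{u\in U}\psi_{x,\nu}(u)$ is attained, the argmax set is nonempty and closed, and $\sup_{\varpi\in\Pc(U)}\int_U\psi_{x,\nu}(u)\,\varpi(du)=\max_{u\in U}\psi_{x,\nu}(u)$ (attained by a Dirac mass). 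Hence \eqref{eq.def.nesingle} at $(x,\nu)$ reads $\int_U\psi_{x,\nu}(u)\,\pitilde(x,\nu)(du)=\max_{u\in U}\psi_{x,\nu}(u)$, which holds if and only if $\pitilde(x,\nu)$ assigns zero mass to $\{u\in U:\psi_{x,\nu}(u)<\max_{u'\in U}\psi_{x,\nu}(u')\}$, i.e. $\supp(\pitilde(x,\nu))\subset\argmax_{u\in U}\psi_{x,\nu}(u)$. Since $(x,\nu)$ was arbitrary, this is exactly \eqref{eq.prop.charelax1}, and both implications are obtained at once.

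The main obstacle is the bookkeeping in the second paragraph: one must check carefully that the shift of the population flow and the shift of the discrete time index of $f$ are compatible, so that the continuation value is genuinely $V^{\pitilde}$ rather than some other shifted value function. Everything else — absolute convergence of the series under Assumption \ref{assum.bound}, the Fubini interchange for the finite sum over $[d]$, and the measure-versus-argmax characterization — is routine.
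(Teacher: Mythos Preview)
Your proposal is correct and follows essentially the same approach as the paper: establish the one-step decomposition $J^{\varpi\otimes_1\pitilde}(x,\nu,\mu^{\pitilde,\nu}_{[1:\infty)})=f^{\varpi}(0,x,\nu)+P^{\varpi}(x,\nu)\cdot V^{\pitilde}(\cdot,\mu^{\pitilde,\nu}_1)$ (which the paper simply labels ``a direct calculation together with \eqref{eq.V.relax}'') and then invoke the equivalence between $\pitilde(x,\nu)$ maximizing the integral over $\Pc(U)$ and the support being contained in the argmax set. Your write-up is in fact more explicit than the paper's about why the continuation value equals $V^{\pitilde}(y,\nu')$ via the time-homogeneity shift, but the underlying argument is identical.
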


\begin{proof}
	Take $\pitilde\in \Pitilde$, Assumption \ref{assum.bound} ensures that $(x,\nu)\mapsto V^{\pitilde}( x,\nu)$ is well-defined and uniformly bounded on $\Delta$. Again, the continuity of $P,f$ on $u$ implies that the argmax set in \eqref{eq.prop.charelax1} is non-empty.
	Moreover, for any $\varpi\in \Pc(U)$, a direct calculation together with \eqref{eq.V.relax} tells that
	\be\label{eq.mfgV.pivarpi} 
	{\color{black}J^{\varpi\otimes_1 \pitilde}\left(x,\nu,\mu^{\pitilde, \nu}_{[1:\infty)} \right) } =  f^\varpi(0, x, \nu) + P^\varpi(x, \nu)\cdot V^{\pitilde}\left(\cdot, \mu^{\pitilde,\nu}_1 \right) \quad \forall (x,\nu)\in \Delta,
	\ee 
	and
$	\mu^{\pitilde,\nu}_1 =\nu\cdot P^{\pitilde(\nu)}(\nu)$ due to \eqref{eq.mu.population}.
	Then Definition \ref{def:sophisticated.ne} indicates that $\pitilde$ is a consistent MFG equilibrium if and only if 
	\bee
	\pitilde(x,\nu)\in \argmax_{\varpi\in \Pc(U)}\left\{ \int_U \left[ f(0, x, \nu, u) + P(x,\nu, u)\cdot V^{\pitilde}\left( \cdot,\nu\cdot P^{\pitilde(\nu)}(\nu) \right) \right]  \varpi(du)\right\}.
	\eee 
	which is equivalent to \eqref{eq.prop.charelax1}.
\end{proof}

%\begin{Proposition}
%Suppose Assumption \ref{assum.bound} hold, then a feedback contrl $\alpha^*: \Delta\mapsto U$ is a TIMFG-NE if an only if for all $(x,\nu)\in \Delta$,
%\be\label{eq.prop.chastandard}  
%\alpha^*(x,\nu)\in \argmax_{\alpha\in U}\left\{ f(0, x, \nu, \alpha)+ P(x, \nu, \alpha)\cdot V^{\alpha^*}\left( 1,\cdot, \sum_{y\in [d]}  \nu(y) P(y,\nu,\alpha^*(y,\nu)) \right) \right\}.
%\ee 
%\end{Proposition}

%\begin{proof}
%The result follows from Definition \ref{def.standard.ne} and
%$$
%J^{\alpha\otimes_1 \alpha^*}(x,\nu) =  f(0, x, \nu, \alpha)+ P(x, \nu, \alpha)\cdot V^{\alpha^*}\left( 1,\cdot, %\sum_{y\in [d]}  \nu(y) P(y,\nu,\alpha^*(y,\nu)) \right) ,\quad \forall (x,\nu)\in \Delta.
%$$
%\end{proof}

%Consider a relaxed control with $\lambda >0$, take $\varpi\in \Pc(U)$, then direct calculation shows
%\be\label{eq.mfgV.pivarpi}  
% V^{\varpi\otimes_1\pitilde}\left(x,\nu, (\mu^\pitilde_t)_{t\in\T}\right)= \int_U \left(f(0, x, \nu, u)+ \sum_{y\in[d]}P(x, y,\nu,u)V^{\pitilde}\left( 1,y, \nu\cdot P^\pitilde(\nu) \right) \right) \varpi(du)+\lambda \Hc(\varpi)
%\ee 
%\eqref{eq.mfgV.pivarpi} gives a characterization formula
%\be\label{eq.V.cha} 
%V^{\pitilde}(0,x,\nu)-\sup_{\varpi\in \Pc(u)}\left\{  f^\varpi(0, x, \nu) + P^\varpi(x, \nu)\cdot V^{\pitilde}\left( 1,\cdot, \nu \cdot P^{\pitilde(\nu)}(\nu) \right)\right\}=0 \quad \forall (x,\nu)\in \Delta.
%\ee 
%Proposition \ref{prop.relax.cha} shows that ... To this end, 
Based on Proposition \ref{prop.relax.cha}, for $\pitilde\in \Pitilde$ we define that
\be\label{eq.Gamma.operator}  
\begin{aligned}
	&\Gamma(\pitilde):=\\
	&\left\{ \pitilde'\in \Pitilde: \supp(\pitilde'(x,\nu)) \subset \argmax_{u\in U} \left\{ f(0,x,\nu,u)+ P(x,\nu,u)\cdot V^{\pitilde}\left(\cdot, \mu^{\pitilde,\nu}_1 \right) \right\}, \forall (x,\nu)\in \Delta \right\}.
\end{aligned}
\ee 
with
	\be\label{eq:mupitilde1}  
\mu^{\pitilde,\nu}_1 = \nu\cdot P^{\pitilde(\nu)}(\nu).
	\ee 
Then $\Gamma$ maps from $\Pitilde$ to $2^{\Pitilde}$.
And Proposition \ref{prop.relax.cha} together with \eqref{eq.Gamma.operator} implies the following corollary, which characterizes a consistent MFG equilibrium as a fixed point of $\Gamma$.
\begin{Corollary}\label{cor:sophisticated.fixedpoint}
	Suppose Assumption \ref{assum.bound} hold. A policy $\pitilde\in \Pitilde$ is a consistent MFG equilibrium in Definition \ref{def:sophisticated.ne} if and only if 
$
	\pitilde\in  \Gamma \left( \pitilde\right).
$
\end{Corollary}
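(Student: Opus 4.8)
The plan is to read this off directly from Proposition \ref{prop.relax.cha}: the set-valued map $\Gamma$ in \eqref{eq.Gamma.operator} was set up precisely so that its fixed points coincide with the policies satisfying the $\argmax$-characterization \eqref{eq.prop.charelax1}, so the corollary is essentially a change of language.

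First I would record the identity $\mu^{\pitilde,\nu}_1=\nu\cdot P^{\pitilde(\nu)}(\nu)$ for every $\pitilde\in\Pitilde$ and every $\nu\in\Pc([d])$, which is exactly the one-step evolution \eqref{eq.mu.population} (and is restated as \eqref{eq:mupitilde1} inside the definition of $\Gamma$). Consequently the vector $V^{\pitilde}(\cdot,\mu^{\pitilde,\nu}_1)$ occurring inside the $\argmax$ in \eqref{eq.Gamma.operator} equals $V^{\pitilde}\!\left(\cdot,\nu\cdot P^{\pitilde(\nu)}(\nu)\right)$, which is precisely the vector appearing in \eqref{eq.prop.charelax1}.

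Next I would unwind the membership $\pitilde\in\Gamma(\pitilde)$. Substituting $\pitilde'=\pitilde$ into the set on the right-hand side of \eqref{eq.Gamma.operator}, the statement $\pitilde\in\Gamma(\pitilde)$ reads
\[
\supp(\pitilde(x,\nu))\subset \argmax_{u\in U}\left\{f(0,x,\nu,u)+P(x,\nu,u)\cdot V^{\pitilde}\!\left(\cdot,\nu\cdot P^{\pitilde(\nu)}(\nu)\right)\right\}\qquad\forall (x,\nu)\in\Delta,
\]
which is word-for-word condition \eqref{eq.prop.charelax1}. By Proposition \ref{prop.relax.cha} (valid under Assumption \ref{assum.bound}), this is equivalent to $\pitilde$ being a consistent MFG equilibrium in the sense of Definition \ref{def:sophisticated.ne}, which completes the argument.

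The only routine point, already dispatched in the proof of Proposition \ref{prop.relax.cha}, is that Assumption \ref{assum.bound} makes $(x,\nu)\mapsto V^{\pitilde}(x,\nu)$ well-defined and uniformly bounded on $\Delta$, and that continuity of $f,P$ in $u$ together with compactness of $U$ makes the $\argmax$ set in \eqref{eq.Gamma.operator} non-empty and Borel, so that $\Gamma(\pitilde)$ is a non-empty subset of $\Pitilde$ and the relation $\pitilde\in\Gamma(\pitilde)$ is meaningful. There is essentially no obstacle here: the corollary is a bookkeeping reformulation of Proposition \ref{prop.relax.cha} recasting the equilibrium condition as a set-valued fixed-point problem, in parallel with the characterization of classic MFG equilibria via $\Phi^\nu$ in Proposition \ref{prop.cha.nonstationary}, and it is meant to prepare an existence analysis for $\Gamma$ analogous to Section \ref{subsec:proof.nonstationary}.
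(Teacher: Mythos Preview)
Your proposal is correct and takes essentially the same approach as the paper: the corollary is stated there as a direct consequence of Proposition~\ref{prop.relax.cha} together with the definition \eqref{eq.Gamma.operator} of $\Gamma$, and your argument is precisely this unwinding, using \eqref{eq:mupitilde1} to identify the $\argmax$ condition in $\Gamma(\pitilde)$ with \eqref{eq.prop.charelax1}.
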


The existence of a consistent MFG equilibrium in $\Pitilde_c$ remains an open question, which we intend to explore in future studies. In fact, $\Gamma$ does not necessarily map from $\Pitilde_c$ to $2^{\Pitilde_c}$, and establishing a fixed point argument as in Section \ref{subsec:proof.nonstationary} within the set $\Pitilde_c$ presents a non-trivial challenge. 

During the rest of this subsection, we present an example study in which sufficient conditions are provided for the existence of a consistent MFG equilibrium within $\Pitilde_c$. %This example study is illustrated in an example with the example structure as follows.
Take $d=2$. {\color{black}Through this example, we shall use the first component to represent a population distribution in $\Pc([2])$. That is, $\nu\in[0,1]$ represents the population value $(\nu, 1-\nu)$. And we simply take
$\Delta:=\{1,2\}\times[0,1]$. 
Let $U=[0,1]$ and the transition probability function $P(x,\nu,u)$ take form
\bee  
P(x,\nu,u)= \frac{1}{2} (\nu, 1-\nu) + \left( \frac{1}{4}+(-1)^x\left( \frac{1}{4}-\frac{u}{2} \right), \frac{1}{4}-(-1)^x\left( \frac{1}{4}-\frac{u}{2} \right) \right) \quad \forall (x,\nu)\in \Delta, u\in U.
\eee 
That is,
\be\label{eq:eg.new0}  
\begin{aligned}
	P(1,\nu,u)= \frac{1}{2} (\nu, 1-\nu) +\frac{1}{2} (u,1-u), \;
	P(2,\nu,u)=& \frac{1}{2} (\nu, 1-\nu) +\frac{1}{2} (1-u,u).
\end{aligned}
\ee
Take $f(t,x,\nu,u):=\delta(t) g(x,\nu,u)$ where $\delta(t)$ takes the weighted discounting form $\delta(t)= \int_0^1 \rho^{t} dF(\rho)$ for some cumulative distribution function $F$ on interval $[0,1)$ and 
$$
g(x,\nu,u):= -u^2+(\frac{1}{2}+\nu) u+x.
$$
Weighted discounting is a common factor that causes time-inconsistency. Most commonly used discount functions also obey the weighted discounting form; see, e.g., \cite{MR4124420} for a detailed discussion.
In addition, Assumptions \ref{assum.bound}, \ref{assum.lipschitz} hold in this example. 
\begin{Proposition}
	Suppose $F \left( \frac{1}{7} \right)=1$, then a consistent MFG equilibrium as defined in Definition \ref{def:sophisticated.ne} exists within $\Pitilde_c$.
\end{Proposition}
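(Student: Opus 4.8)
The plan is to exhibit a consistent MFG equilibrium explicitly by reducing, via Corollary~\ref{cor:sophisticated.fixedpoint}, the fixed-point problem for $\Gamma$ to a pointwise maximization in $u\in[0,1]$ at each $(x,\nu)\in\Delta$. First I would observe that, since $g(x,\nu,u)=-u^2+(\tfrac12+\nu)u+x$ is strictly concave and quadratic in $u$ and $P(x,\nu,u)$ is affine in $u$, the bracket $f(0,x,\nu,u)+P(x,\nu,u)\cdot V^{\pitilde}(\cdot,\mu^{\pitilde,\nu}_1)$ inside the $\argmax$ in \eqref{eq.prop.charelax1} is, for fixed $\pitilde$ and $\nu$, a strictly concave quadratic in $u$ (note $\delta(0)=\int_0^1 dF(\rho)=1>0$). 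Hence its maximizer is the unique point obtained by projecting the unconstrained vertex onto $[0,1]$, and any consistent equilibrium must in fact be \emph{pure}, i.e.\ $\pitilde(x,\nu)=\bm\delta_{a(x,\nu)}$ for a Borel map $a:\Delta\to[0,1]$. This collapses the problem to finding a fixed point of a scalar map.

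Next I would set up the scalar fixed-point map. Writing $a(\nu):=(a(1,\nu),a(2,\nu))$, the pushed-forward population is $\mu^{\pitilde,\nu}_1=\nu\cdot P^{\pitilde(\nu)}(\nu)$, whose first component $\nu_1:=\tfrac12\nu+\tfrac12\big(\nu a(1,\nu)+(1-\nu)(1-a(2,\nu))\big)$ by \eqref{eq:eg.new0}. For a fixed (continuous, pure) strategy $\pitilde$ one solves the linear system for $V^{\pitilde}(\cdot,\cdot)$ — it satisfies $V^{\pitilde}(x,\nu)=\delta(1)g(x,\mu^{\pitilde,\nu}_1,a(1,\mu^{\pitilde,\nu}_1)\text{ or }a(2,\cdot))+\dots$; more cleanly, use the weighted-discounting representation $\delta(t)=\int_0^1\rho^t\,dF(\rho)$ to write $V^{\pitilde}(x,\nu)=\int_0^1 \rho\,W_\rho^{\pitilde}(x,\nu)\,dF(\rho)$ where $W_\rho^{\pitilde}$ is the ordinary $\rho$-discounted value of the stationary Markov reward process under $\pitilde$, which is bounded and Lipschitz in $\nu$ uniformly in $\rho\le 1/7$ because the discount factor is small. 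Then $\Gamma$ acting on pure continuous strategies becomes: given $a(\cdot)$, compute $V^{\pitilde}$, then $a'(x,\nu):=\mathrm{clip}_{[0,1]}\big(\tfrac12[(\tfrac12+\nu)+(-1)^x(V(1,\nu_1)-V(2,\nu_1))]\big)$, the vertex of the concave quadratic. The key estimates are: (i) $V^{\pitilde}(\cdot,\cdot)$ depends Lipschitz-continuously on $\nu$ with a constant controlled by $L_1,L_2$ and the bound $F(1/7)=1$, so $a'$ is continuous in $\nu$, giving $\Gamma:\Pitilde_c^{\mathrm{pure}}\to\Pitilde_c^{\mathrm{pure}}$; and (ii) the map $a\mapsto a'$ is a contraction on the sup norm, where the contraction constant is made $<1$ precisely by the hypothesis $F(1/7)=1$ (so all $\rho\le 1/7$, making the effective discounting and the feedback sensitivity small enough). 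Apply the Banach fixed point theorem on $C(\Delta,[0,1])$ (a complete metric space under sup norm), and the resulting fixed point $a^*$ yields $\pitilde^*=\bm\delta_{a^*}\in\Pitilde_c$ with $\pitilde^*\in\Gamma(\pitilde^*)$, hence a consistent MFG equilibrium by Corollary~\ref{cor:sophisticated.fixedpoint}.

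The main obstacle is the contraction estimate (ii): one must track how a perturbation of the feedback $a$ propagates through the population update $\nu\mapsto\nu_1$ (whose derivative in $a$ is $\tfrac12\nu$ or $\tfrac12(1-\nu)$, bounded by $\tfrac12$), through the value function $V$ (whose Lipschitz constant in $\nu$ must be bounded using the $\rho\le 1/7$ cap, the geometric series $\sum_t \rho^t = 1/(1-\rho)\le 7/6$, and $L_1,L_2$), and finally through the clipping step (which is $1$-Lipschitz). Assembling these, the composite Lipschitz constant is a product of the form $C\cdot\sup_{\rho\le 1/7}\rho/(1-\rho)=C/6$ times the problem's Lipschitz data; the specific threshold $1/7$ is chosen so that this product is strictly below $1$. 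I would carry out this bookkeeping carefully — it is where the precise constant $1/7$ earns its keep — and everything else (concavity, purity, continuity, the reduction to a scalar map) is routine. An alternative to Banach, if the contraction constant turns out delicate, is to apply Schauder's fixed point theorem on the convex compact set of pure strategies with a uniform Lipschitz-in-$\nu$ bound (using Arzelà–Ascoli as in Corollary~\ref{cor.Nequi.converge}), which only needs continuity of $a\mapsto a'$ plus a self-map property preserving a common modulus of continuity; I would keep this as a fallback.
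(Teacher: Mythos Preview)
Your reduction to pure (Dirac) strategies via the strict concavity of $u\mapsto g(x,\nu,u)+P(x,\nu,u)\cdot V^{\pitilde}(\cdot,\mu^{\pitilde,\nu}_1)$ is correct and matches the paper. However, your primary route --- a sup-norm Banach contraction on all of $C(\Delta,[0,1])$ --- has a genuine gap. To bound $\|\Gamma(a_1)-\Gamma(a_2)\|_\infty$ you must, as you yourself note, control $|V^{a}(\cdot,w^{a_1}(\nu))-V^{a}(\cdot,w^{a_2}(\nu))|$, i.e.\ you need a bound on $\|V^{a}_\nu\|$. But $\|V^{a}_\nu\|$ (equivalently $\|J^a_\nu(\cdot;\rho)\|$) depends on $\|a_\nu\|$: the feedback $a(x,\mu^{\pitilde,\nu}_t)$ enters every step of the recursion for $J^a$, and differentiating in $\nu$ picks up a factor $a_\nu$ via the chain rule at each step. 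On all of $C(\Delta,[0,1])$ there is no uniform bound on $\|a_\nu\|$, hence no uniform contraction constant. Your heuristic ``$C/6$'' captures only the factor $\sup_{\rho\le 1/7}\rho/(1-\rho)$ and ignores this feedback amplification through $\|a_\nu\|$.

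Your fallback --- a Schauder-type fixed point on pure strategies sharing a common Lipschitz bound in $\nu$ --- is precisely the paper's route. The paper works on $\{a:\|a_\nu\|_{L^\infty(\Delta)}\le \tfrac32\}$, shows $\Gamma$ maps this set to itself, and invokes a fixed-point theorem (stated as Brouwer; in this infinite-dimensional setting Schauder is the appropriate version). The heart of the argument is the self-map estimate: from $\|a_\nu\|\le \tfrac32$ one gets $\|w_\nu\|\le 1+\tfrac12\cdot\tfrac32=\tfrac74$; the recursion for $J^a_\nu(\cdot;\rho)$ then closes because its self-coefficient is $\rho(1+\tfrac12\|a_\nu\|)\le \tfrac17\cdot\tfrac74=\tfrac14<1$, yielding $\|J^a_\nu(\cdot;\rho)\|\le 7$ uniformly for $\rho\le \tfrac17$; hence $\|V^a_\nu\|\le \int_0^{1/7}\rho\cdot 7\,dF(\rho)\le 1$; and finally $\|(\Gamma a)_\nu\|\le \tfrac12\bigl(1+1\cdot\tfrac74\bigr)=\tfrac{11}{8}<\tfrac32$. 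This Lipschitz self-map inequality --- not a sup-norm contraction --- is where the threshold $\tfrac17$ is actually used.
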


\begin{proof}
	During the proof, for a function $h(x,\nu):\Delta\to \R$, we denote by $h_{\nu}$ the derivative of $h$ on the second variable $\nu$ if it exists. 
	
	For each $(x,\nu)\in \Delta$, $P(x,\nu,u)$ is linear in $u$ and $f(0,x,\nu,u)=g(x,\nu,u)$ is strictly concave in $u$. Then Corollary \ref{cor:sophisticated.fixedpoint} together with \eqref{eq.Gamma.operator} implies that any consistent equilibrium
	% in Definition \ref{def:sophisticated.ne} 
	takes Dirac form distributions on $U$. Consequently, any consistent equilibrium $\pitilde\in \Pitilde_c$ can be treated as a function $\Delta\to U$ that is continuous in $\nu$, and we denote by $\Pitilde_{c, \operatorname{Dirac}}$ the family of all such functions. 	
	Then, for any consistent equilibrium $\pitilde\in \Pitilde_{c, \operatorname{Dirac}}$,
	\be\label{eq:eg.new1} 
	P^{\tilde\pi(x,\nu)}(x,\nu,y) = P(x,\nu,y,\tilde\pi(x,\nu)), \quad g^{\tilde\pi(x,\nu)}(x,\nu) =  g(x,\nu,\tilde\pi(x,\nu)).
	\ee 
	Moreover, 
	$\Gamma$ is now an operator from $\Pitilde_{c, \operatorname{Dirac}}$ to itself. Next, we will prove that 
	\be\label{eq:eg.fixpoint}  
	\left\| 	\left( \Gamma\left( \pitilde \right) \right)_\nu \right\|_{L^\infty(\Delta)} \leq \frac{3}{2} \quad \text{if} \; \pitilde\in \Pitilde_{c, \operatorname{Dirac}} \text{ and }\|\pitilde_\nu\|_{L^\infty(\Delta)}\leq \frac{3}{2},
	\ee 
	showing that $\Gamma$ maps from a convex and compact subset of $\Pitilde_{c, \operatorname{Dirac}}$ to itself. Then Brouwer's fixed point theorem yields the existence of a fixed point, and thus, a consistent equilibrium exists in $\Pitilde_{c, \operatorname{Dirac}}$.
	
	To verify \eqref{eq:eg.fixpoint}, we take an arbitrary $\pitilde\in \Pitilde_{c, \operatorname{Dirac}}$ that is differentiable in $\nu$ such that
	$$
	\left\| \pitilde_{\nu} \right\|_{L^\infty(\Delta)}=\sup_{(x,\nu)\in \Delta} \left| \pitilde_{\nu}(x,\nu) \right|\leq \frac{3}{2}.
	$$
By \eqref{eq:eg.new0} and \eqref{eq:eg.new1}, 
	$\mu^{\pitilde,\nu}_1$ in \eqref{eq:mupitilde1} now obeys the formula
	\begin{align}
		\mu^{\pitilde,\nu}_1=& \nu P\left( 1,\nu,1,\pitilde(1,\nu)\right)+(1-\nu)P(2, \nu, 1,\pitilde(2,\nu))  \notag\\
		=&\nu \left(\frac{1}{2} \nu+\frac{1}{2} \pitilde(1,\nu) \right)+ (1-\nu) \left(\frac{1}{2}\nu+\frac{1}{2}(1-\pitilde(2,\nu)) \right) \notag\\
		=&\frac 12+\frac{1}{2}\nu \pitilde(1,\nu)-\frac{1}{2}(1-\nu)\pitilde(2,\nu)=:w(\nu). \label{eq:eg.mu1}
	\end{align}
Taking derivative yields
	\bee
	w_{\nu}(\nu) =\frac12 \pitilde (1,\nu) +\frac12 \pitilde(2,\nu) +\frac 12 \nu \pitilde_\nu(1,\nu)-\frac 12 (1-\nu) \pitilde_\nu(2,\nu),
	\eee 
	which tells that 
	\be\label{eq:eg.new}    
 \|w_\nu\|_{L^\infty([0,1])} \leq 1+ \frac 12 \|\pitilde_\nu\|_{L^\infty(\Delta)}.
  \ee 
	For any $\rho\in [0,1)$. We define for each $x\in[2]$ that
	\bee
	J^\pitilde(x,\nu; \rho):= \E\left[ \sum_{t=0}^\infty  \rho^{-t}g^{\pitilde\left( s^{\pitilde, \mu^{\pitilde,\nu}}_t, \mu^{\pitilde,\nu}_t \right)}\left(t, s_t^{\pitilde, \mu^{\pitilde,\nu}}, \mu^{\pitilde,\nu}_t \right) \right].
	\eee 
	Then $J^{\pitilde}(x,\nu;\rho)$ is differentiable on variable $\nu$ and 
	\be\label{eq:eg.derivweight} 
	V^\pitilde(x,\nu)=\int_0^1 \rho J^{\pitilde}(x,\nu; \rho) dF(\rho),\quad V_\nu^\pitilde(x,\nu)=\int_0^1 \rho J_\nu^{\pitilde}(x,\nu; \rho) dF(\rho)\quad  \forall x\in[2].
	\ee 
	A direct calculation together with \eqref{eq:eg.new1} and \eqref{eq:eg.mu1} shows that 
	\be\label{eq:Vrho} 
	\begin{aligned}
		J^{\pitilde}(x,\nu; \rho)=&g(x, \nu, \pitilde(x,\nu))+ \sum_{y\in[2]}\rho P\left( x,\nu, y, \pitilde(x,\nu) \right)J^{\pitilde}\big(y,w(\nu ); \rho\big)\quad  \forall (x,\nu)\in \Delta.
	\end{aligned}
	\ee 
	For $x=1,2$, taking derivative on $\nu$ for both sides of \eqref{eq:Vrho} yields
	\bee  
	\begin{aligned}
		J^{\pitilde}_\nu(1,\nu; \rho)
		=&-2\pitilde(1,\nu)\pitilde_\nu(1,\nu)+\pitilde(1,\nu)+\left(\frac{1}{2}+\nu \right)\pitilde_\nu(1,\nu)\\
		& +\frac{1}{2}\rho \bigg[ \left( 1+\pitilde_\nu (1,\nu) \right)J^{\pitilde} \big(1,  w(\nu); \rho \big) +\left(-1-\pitilde_\nu(1,\nu) \right)J^{\pitilde} \big(2,  w(\nu); \rho \big) \bigg]\\
		& +\frac{1}{2}\rho \bigg[ \left( \nu+\pitilde (1,\nu) \right)J^{\pitilde}_\nu \big(1,  w(\nu); \rho \big) +\left( 2-\nu-\pitilde(1,\nu) \right)J^{\pitilde}_\nu \big(2,  w(\nu); \rho \big) \bigg]w_\nu(\nu),\\
		J^{\pitilde}_\nu(2,\nu ; \rho)=&-2\pitilde(2,\nu)\pitilde_\nu(2,\nu)+\pitilde(2,\nu)+\left( \frac{1}{2}+\nu \right)\pitilde_\nu(2,\nu)\\
		& +\frac{1}{2}\rho \bigg[ (1-\pitilde_\nu (2,\nu))J^{\pitilde} \big(1,  w(\nu); \rho \big) +(-1+\pitilde_\nu(2,\nu))J^{\pitilde} \big(2,  w(2       ,\nu); \rho \big) \bigg]\\
		& +\frac{1}{2}\rho \bigg[ (\nu+1-\pitilde (2,\nu))J^{\pitilde}_\nu \big(1,  w(\nu); \rho \big) +(1-\nu+\pitilde(2,\nu))J^{\pitilde}_\nu \big(2,  w(\nu); \rho \big) \bigg]w_\nu(\nu).
	\end{aligned}
	\eee
	Notice that $\pitilde\in[0,1]$ and $|J^{\pitilde}(x,\nu; \rho)|\leq \frac{1}{1-\rho}\sup_{(x,\nu)\in \Delta, u\in U}|g(x,\nu,u)|=\frac{3}{1-\rho}$. Then the above two equalities together with \eqref{eq:eg.new} imply that
	\bee 
	\begin{aligned}
		\left\|  J^{\pitilde}_\nu(\cdot ; \rho) \right\|_{L^\infty(\Delta)}\leq & 1+ 2 \| \pitilde_\nu \|_{L^\infty(\Delta)}+\frac 12 \rho\left[  2\cdot \frac{3}{1-\rho}\left(1+ \| \pitilde_\nu \|_{L^\infty(\Delta)} \right)\right]\\
		&+\frac 12 \rho\left[ 2 \left\|  J^{\pitilde}_\nu(\cdot ; \rho) \right\|_{L^\infty(\Delta)} \right]\left( 1+ \frac{1}{2}\| \pitilde_\nu \|_{L^\infty(\Delta)} \right).
%		\leq & 1+ 2 \| \pitilde_\nu \|_{L^\infty(\Delta)}+\rho\left( 1+ \| \pitilde_\nu \|_{L^\infty(\Delta)}  \right)^2 \left\|  J^{\pitilde}_\nu(\cdot ; \rho) \right\|_{L^\infty(\Delta)}.
	\end{aligned}
	\eee  
	Since $\| \pitilde_\nu \|_{L^\infty(\Delta)} \leq \frac 32$, $\rho \left(1+ \frac 12 \| \pitilde_\nu \|_{L^\infty(\Delta)} \right)\leq \frac 14$ for all $0\leq \rho\leq \frac 17$. Then the above inequality yields
	\be\label{eq:eg.Vrhonu} 
	\begin{aligned}
	\left\|  J^{\pitilde}_\nu (\cdot ; \rho)\right\|_{L^\infty(\Delta)}\leq & \frac{1+ 2 \| \pitilde_\nu \|_{L^\infty(\Delta)}+  \frac{3\rho}{1-\rho}\left(1+ \| \pitilde_\nu \|_{L^\infty(\Delta)} \right)}{1-\rho \left( 1+\frac 12 \| \pitilde_\nu \|_{L^\infty(\Delta)} \right) }\\
	\leq & \frac{1+3+\frac{1}{2}(1+\frac 32)}{1-\frac 14} = 7\quad \forall  0< \rho\leq \frac{1}{7}.
	\end{aligned} 
	\ee 
	Meanwhile, for $x\in[2]$, set 
	\be\label{eq:eg.new5} 
	\begin{aligned}
		0=&\frac{\partial}{\partial u} \left( g(x,\nu,u)+P(x,\nu,u)\cdot V^{\pitilde}\left(\cdot, w(\nu) \right) \right)\\ 
		= &-2u +\frac{1}{2}+\nu+(-1)^x \frac{1}{2}\left[   V^{\pitilde}\left(2, w(\nu) \right) -V^{\pitilde}\left(1, w(\nu) \right) \right].
	\end{aligned}
	\ee 
	By \eqref{eq:eg.derivweight}, $F(\frac 17)=1$ and $|J^{\pitilde}(\cdot ; \rho)|\leq \frac{3}{1-\rho}$, $|V^{\pitilde}(\cdot;\rho)|\leq \frac 12$, so the right-hand side of \eqref{eq:eg.new5} satisfies
	$$
	0\leq \frac{1}{2}+\nu+(-1)^x \frac{1}{2}\left[   V^{\pitilde}\left(2, w(\nu) \right) -V^{\pitilde}\left(1, w(\nu) \right) \right]\leq 2,
	$$ 
	which tells the root of \eqref{eq:eg.new5} belongs to $[0,1]$.
	This together with \eqref{eq.Gamma.operator} and the strict concavity of $g$ on $u$ yields that 
	$$
	 \Gamma\left( \pitilde \right) =\frac{1}{2}\left[\frac{1}{2}+\nu+(-1)^x \frac{1}{2}\left(  V^{\pitilde}\left(2,  w(\nu) \right) - V^{\pitilde}\left(1,  w(\nu) \right) \right) \right].$$
	%provided that the right-hand side above belongs to $[0,1]$. 
	Then by taking derivative of the above equality and combining with \eqref{eq:eg.new}, we have
	\begin{align*} 
		&\left\| 	\left( \Gamma\left( \pitilde \right) \right)_\nu  \right\|_{L^\infty(\Delta)}\leq \frac{1}{2}\left[ 1+ \left\|  V^{\pitilde}_\nu\left(\cdot, \nu \right)   \right\|_{L^\infty(\Delta)} \left( 1+\frac{1}{2}\left\| \pitilde_\nu \right\|_{L^\infty(\Delta)}\right)\right] \\
		&\leq \frac{1}{2}\left[ 1+ \left( \int_0^1 \rho  \left\|  J^{\pitilde}_\nu (\cdot ; \rho)\right\|_{L^\infty(\Delta)} dF(\rho) \right) \left( 1+\frac{1}{2}\left\| \pitilde_\nu \right\|_{L^\infty(\Delta)}\right) \right]\\
		&\leq \frac{1}{2}\left[ 1 +\left(\int_0^\frac{1}{7} \rho 7dF(\rho) \right)\frac{7}{4} \right]\leq \frac{1}{2}\left( 1+\frac{1}{7}\cdot 7\cdot \frac{7}{4} \right)=\frac{11}{8}<\frac{12}{8}= \|\pitilde_\nu\|_{L^\infty(\Delta)},
	\end{align*}
	where the second line above follows from \eqref{eq:eg.derivweight} and the third line above follows from \eqref{eq:eg.Vrhonu}. Hence, \eqref{eq:eg.fixpoint} holds.
\end{proof}
}

\begin{Remark}
	Even though we assume that $\pitilde$ is differential on $\nu$ in the proof, it is sufficient to assume that $\pitilde$ is Lipschitz continuous with a bounded Lipschitz norm less or equal to $3/2$. Then by the calculations in the proof, one can deduce that the same upper bound applies to the bounded Lipschitz norm of $\Gamma\left(\pitilde \right)$. Consequently, the existence result remains valid.
	
	%Moreover, the condition that $F(1/7)=1$ is not strict, the idea of the proof is to show that the Lipschitz constant of $\Gamma\left( \pitilde \right)$ is bounded by the Lipschitz constant of $\pitilde$ if function $F(\cdot)$ concentrates on small $\rho$ values. It can be verified that the arguments in the proof are still valid if condition $F(1/7)=1$ is relaxed to $F(53/350)=1$.
\end{Remark}

\section{Proofs for Theorem \ref{thm.MFG.Neps} and Theorem \ref{thm.Nequi.converge}}\label{subsec:proof.sophisticated}

Throughout this section, we consistently assume that Assumptions \ref{assum.bound}, \ref{assum.lipschitz} hold. {\color{black}Notice that all consistent ($\eps$)-equilibria for $N$-agent games in the statement of Theorem \ref{thm.MFG.Neps} and Theorem \ref{thm.Nequi.converge} are replicating $N$-tuples, i.e., $\pitildebm^N=(\pitilde,...,\pitilde)$ for some $\pitilde\in \Pitilde$. Therefore, we always take agent 1 as the representative agent for an $N$-agent game through this section.} To start the proofs, we first provide a useful formula of $J^{N,1, {\color{black}({\pitildebm}^N_{-1}, \varpi)\otimes_1{\pitildebm}^N} }(x, \nu)$ where $\pitildebm^N=(\pitilde,...,\pitilde)$ with $\pitilde\in \Pitilde$,  (see \eqref{eq.N.Jpasting}) in Section \ref{subsubsec:formula.Jpasting}. Then several lemmas are provided in Section \ref{subsubsec:lms}. The proofs for Theorem \ref{thm.MFG.Neps} and Theorem \ref{thm.Nequi.converge} are conducted in Section \ref{subsubsec:proof.Neps} and Section \ref{subsubsec:proof.Nconverge}, respectively.

\subsection{A formula of $J^{N,1,( \vect{{\tilde \pi}}^N_{-1}, \varpi)\otimes_1{\vect{\pitilde}}^N}(x, \nu)$}\label{subsubsec:formula.Jpasting}

Take a replicating $N$-tuple $\pitildebm^N=(\pitilde,...,\pitilde)$ with $\pitilde\in \Pitilde$ and a probability measure $\varpi\in \Pc(U)$. Pick $(x,\nu)\in \Delta_N$. We consider the pay-off $J^{N,1, {\color{black}\left( \pitildebm^N_{-1}, \varpi \right)\otimes_{1}\pitildebm^N} }(x, \nu)$. Similar to the auxiliary function defined in \eqref{eq.V.relax} for MFG, we introduce the following auxiliary function for the replicating $N$-tuple $\pitildebm^N$ as
\be\label{eq.def.VNagentpitilde}
\begin{aligned}
	V^{N,\pitilde}(x, \nu):=& \E\left[ \sum_{t=0}^\infty  f^{\pitilde^{N,1}\left( s^{N,1}_t, \mu_t^{N,\pitildebm^N,\nu} \right) }\left(1+t, s^{N,1}_t, \mu^{N,\pitildebm^N,\nu}_t \right) \Bigg| s^{N,1}_0 = x \right].
\end{aligned}
\ee

At $t= 0$, the $N$-tuple $\left( \pitildebm^N_{-1},\varpi \right)$ is applied, and agent 1 simulates $\alpha^{N,1}_0$ using $\varpi$ independently of other agents. Then a direct calculation yields
\be\label{eq:JN.pitildevarpi} 
\begin{aligned}
&J^{N,1,\left( \pitildebm^N_{-1}, \varpi \right)\otimes_{1}\pitildebm^N}(x, \nu)\\
&=\E\left[  f^\varpi\left(0, s^{N,1}_0,\nu \right)+	V^{N, \pitilde}\left(s^{N,1}_1, \mu^{N,\bm \pitilde^N,\nu}_1 \right) \Big| s^{N,1}_0= x\right]\\
&=\int_U \E\left[  f\left(0, x,\nu, u \right)+\sum_{y\in [d]}P(x,\nu,y,u)V^{N,\pitilde}\left(y, \mu^{N,\bm \pitilde^N,\nu}_1 \right)\Bigg| s^{N,1}_0= x, \alpha^{N,1}_0=u \right] \varpi(du).
\end{aligned}
\ee
 {\color{black}Notice that $\E\left[\mu^{N,\bm \pitilde^N,\nu}_1 \big| s^{N,1}_0= x, \alpha^{N,1}_0=u \right]$, and thus $\E\left[V^{N,\pitilde}\left(y, \mu^{N,\bm \pitilde^N,\nu}_1 \right)\big| s^{N,1}_0= x, \alpha^{N,1}_0=u \right]$ in \eqref{eq:JN.pitildevarpi}, are permutation invariant w.r.t. the order of agents $i=2,\cdots, N$, thus are fully determined by values $(x,\nu)\in \Delta_N$, $y\in[d]$, $u\in U$ and control $\pitilde\in \Pitilde$. Indeed, for $(x,\nu)\in \Delta_N$ chosen above, suppose the initial states $\bm{s}^{N}_0=(s^{N,1}_0,..., s^{N,N}_0)$ with $s^{N,1}_0=x$ and $\frac{1}{N}\left(\sum_{1\leq i\leq N} e_{s^{N,i}_0}\right)=\nu$. Then we can reorder agents 2 to $N$ in the way that the agents with smaller indices have smaller starting states. That is, we define a new vector $\bm{ \bar s}^{N}(x,\nu)=(\bar s^{N, 1}(x,\nu),..., \bar s^{N, N}(x,\nu))$ only dependent on $N$ and $(x,\nu)\in \Delta_N$ as follows: 
 \be\label{eq.def.sNnu} 
 \begin{aligned} 
 	&\bar{s}^{N,1}(x,\nu):=x,\; \bar{s}^{N,2}(x,\nu)=\cdots=\bar{s}^{N, 1+N\nu(1)+(-1)\bm1_{\{ x=1\}} }(x,\nu)=1,\\ &\bar{s}^{N,{2+N\nu(1)}+(-1)\bm\delta_{\{ x=1\}}}(x,\nu)=\cdots=\bar{s}^{N,{1+N\nu(1)}+N\nu(2)+\sum_{y=1}^2(-1)\bm1_{\{ x=y\}}}(x,\nu)=2, \;\text{and so on},
 \end{aligned}
 \ee
 here $\bm1_{\{x=y\}}$ means the indicator function with value equal to 1 when $x=y$ and equal to 0 otherwise.} Then the game starting with $\bm s^N_0$ and the game starting with $\bm{ \bar s}^{N}(x,\nu)$ exhibit the same joint empirical flow and dynamic for agent 1 in law.
Moreover, by considering $\alpha^{N,i,\nu}_0\sim \pitilde\left( \bar{s}^{N,i}(x,\nu), \nu \right)$, $2\leq i\leq N$, as mutually independent random variables, we have for any $u\in U$ that
      	\be\label{eq:YnuxN} 
      	\begin{aligned}
      {\color{black}	Y^{N,\pitilde}(x,\nu,u):=}& {\color{black}\P\left( \mu^{N,\pitildebm^N, \nu}_1 \Big| s^{N,1}_0=x, \alpha^{N,1}_0=u \right)}\\      	    
        {\color{black} =}&  {\color{black}\frac{1}{N}\left[e_{x}\cdot P(\nu, u)+\sum_{i=2}^N e_{\bar s^{N, i}(x,\nu)} \cdot P\left( \nu, \alpha^{N,i,\nu}_0 \right) \right] \; \text{in distribution},}
      	   \end{aligned}  
      	\ee 
the last line above is random, of which the distribution is fully determined by $N\in \N, (x,\nu)\in \Delta_N, u\in U$ and $\pitilde\in \Pitilde$. {\color{black}That is, we represents the conditional distribution of $\mu^{N,\pitildebm^N, \nu}_1$ on $\{s^{N,1}_0=x, \alpha^{N,1}_0=u\}$ as the random variable $Y^{N,\pitilde}(x,\nu,u)$, whose distribution is fully determined by $N\in \N, (x,\nu)\in \Delta_N, u\in U$ and $\pitilde\in \Pitilde$.} Then, 
%   \begin{align*}
%   \E\left[ \mu^N_1\mid \alpha^{N,1}_0=u, \alpha^{N,i}_0=u_i, 2\leq i\leq N \right]
%   	=&\frac{1}{N}\left[e_{x}\cdot P(\nu, u)+\sum_{i=2}^N e_{\bar s^{N, 1}(x,\nu)} \cdot P(\nu, u_i) \right]. 
%   \end{align*}
 \begin{align}
 	&\E\left[V^{N,\pitilde}\left(y, \mu^{N,\bm \pitilde^N,\nu}_1 \right)\Bigg| s^{N,1}_0= x, \alpha^{N,1}_0=u \right]
 	= \E\left[V^{N,\pitilde}\left(y, {\color{black} Y^{N,\pitilde}(x,\nu,u) } \right) \right] \notag \\
 	&=\int_U\cdots \int_U V^{N,\pitilde} \left(y,\frac{1}{N}\left[e_{x}\cdot P(\nu, u)+\sum_{i=2}^N e_{\bar s^{N, i}(x,\nu)} \cdot P\left( \nu, u_i \right) \right]  \right) \prod_{i=2}^N\pitilde\left( \bar s^{N,i}(x,\nu), \nu \right) (du_i) \notag\\
 	&=: W^{N, \pitilde}(x,\nu,y,u) \label{eq.def.WN}.
 \end{align}
The integral in second line above is fully determined by $N\in \N, (x,\nu)\in \Delta_N, y\in[d], u\in U$ and $\pitilde\in \Pitilde$. Thus, for fixed $N\in \N$ and $\pitilde\in \Pitilde$, $W^{N, \pitilde}(x,\nu,y,u)$ defines a new auxiliary function maps from $\Delta_N\times[d]\times  U$ to $\R$.
And by combining \eqref{eq:JN.pitildevarpi} with \eqref{eq.def.WN}, we reach that
\be\label{eq.N.Jpasting}   
\begin{aligned}
	&J^{N,1,\left( \pitildebm^N_{-1}, \varpi \right)\otimes_{1}\pitildebm^N}(x, \nu) \\
	&= \int_U \left[ f(0,x,\nu, u)+\sum_{y\in[d]} P(x,\nu, y, u)
	W^{N, \pitilde}(x,\nu,y,u)\right] \varpi(du). 
\end{aligned} 
\ee

\subsection{Lemmata}\label{subsubsec:lms}

We present several lemmas in this section which serve as key gradients for the proofs in Sections \ref{subsubsec:proof.Neps}, \ref{subsubsec:proof.Nconverge}. To begin with, we further introduce the following notation. For an integer $N\geq 2$ and $(x,\nu)\in \Delta$, define $\nu(N,x)\in \Pc([d])$ as the point  that is closest to $\nu$ under the Euclidean norm and satisfies that $N\nu(N,x)\in \N^d$ with $(\nu(N,x))(x)>0$. When multiple such closest points exist, we simply choose the point that has the larger entries in larger indices. For example, by taking $\nu=(\frac12,\frac16,\frac13)$, $N=3, x=3$, $\nu(3,3)=(\frac{1}{3}, \frac13,\frac13)$ instead of $(\frac23, 0, \frac13)$. Notice that
\be\label{eq.muNx.converge}  
\sup_{(x,\nu)\in \Delta} |\nu(N,x)-\nu|\to 0 \quad \text{as } N\to\infty.
\ee 
%Define
%\be\label{eq.def.Cspace} 
%\Cc:= \{w: \Delta\mapsto \R, w(x,\cdot) \text{ is continuous on $\Pc([d])$ for all $x\in[d]$} \}.
%\ee 
%Notice that, for any $w\in \Cc$, $\nu\to w(\nu)$ is Lipschitz for all $x\in[d]$.

Recall \eqref{eq.V.relax}. A quick result for a policy $\pitilde\in \Pitilde_c$ is the following.
\begin{Lemma}\label{lm.pi.cts}
Let $\pitilde\in \Pitilde_c$, then
	$\nu\mapsto V^{\pitilde}(x,\nu)$  is continuous for all $x\in[d]$.
\end{Lemma}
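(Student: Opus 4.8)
The plan is to fix $x\in[d]$ and show directly that $\nu\mapsto V^{\pitilde}(x,\nu)$ is continuous by using the series representation in \eqref{eq.V.relax}, Assumption \ref{assum.bound} to truncate the infinite sum uniformly, and then arguing termwise continuity of each truncated expectation. First I would recall that
$$
V^{\pitilde}(x,\nu)=\sum_{t\geq 0}\E\left[ f^{\pitilde\left( s^{\pitilde,\mu^{\pitilde,\nu}}_t, \mu^{\pitilde,\nu}_t \right)}\left(1+t, s_t^{\pitilde, \mu^{\pitilde,\nu}}, \mu^{\pitilde,\nu}_t \right) \,\Big|\, s^{\pitilde, \mu^{\pitilde,\nu}}_0=x\right],
$$
and by Assumption \ref{assum.bound}, given $\eps>0$ there is $T$ so that the tail $t>T$ contributes at most $\eps$ uniformly in $\nu$ (indeed uniformly in everything). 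So it suffices to prove that the finite sum $\sum_{0\le t\le T}$ of these expectations is continuous in $\nu$, and for that it suffices to prove continuity of each single term.

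The core of the argument is then an induction on $t$ establishing that, as $\nu'\to\nu$ in $(\Pc([d]),|\cdot|)$, (i) $\mu^{\pitilde,\nu'}_t\to\mu^{\pitilde,\nu}_t$ in $\Pc([d])$, and (ii) the law of $s^{\pitilde,\mu^{\pitilde,\nu'}}_t$ (conditional on starting at $x$) converges to the law of $s^{\pitilde,\mu^{\pitilde,\nu}}_t$. For the base case $t=0$, $\mu^{\pitilde,\nu'}_0=\nu'\to\nu$ and the agent's state is deterministically $x$. For the inductive step one uses the recursion \eqref{eq.mu.population}, i.e. $\mu^{\pitilde,\nu'}_{t+1}=\mu^{\pitilde,\nu'}_t\cdot P^{\pitilde(\mu^{\pitilde,\nu'}_t)}(\mu^{\pitilde,\nu'}_t)$, together with: the induction hypothesis $\mu^{\pitilde,\nu'}_t\to\mu^{\pitilde,\nu}_t$; the joint continuity of $(\mu,u)\mapsto P(x,\mu,y,u)$ (Assumption \ref{assum:continuous} / Assumption \ref{assum.lipschitz}); and the fact that $\pitilde\in\Pitilde_c$, so $\nu\mapsto\pitilde(x,\nu)$ is $\Wc_1$-continuous, hence $P^{\pitilde(x,\mu^{\pitilde,\nu'}_t)}(x,\mu^{\pitilde,\nu'}_t,y)=\int_U P(x,\mu^{\pitilde,\nu'}_t,y,u)\,\pitilde(x,\mu^{\pitilde,\nu'}_t)(du)\to \int_U P(x,\mu^{\pitilde,\nu}_t,y,u)\,\pitilde(x,\mu^{\pitilde,\nu}_t)(du)$. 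The last convergence follows because one is integrating a function that varies (in $\mu$, uniformly in $u$ by Lipschitzness) against measures converging in $\Wc_1$; a standard $\sup$-norm-plus-$\Wc_1$ estimate handles the two perturbations simultaneously. Since $[d]$ is finite, convergence of transition entries gives convergence of the whole transition matrix, so the state law at step $t+1$ converges, completing the induction.

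Finally, for the single term at level $t$: the conditional expectation equals $\sum_{y\in[d]}\P\big(s^{\pitilde,\mu^{\pitilde,\nu}}_t=y\mid s_0=x\big)\int_U f(1+t,y,\mu^{\pitilde,\nu}_t,u)\,\pitilde(y,\mu^{\pitilde,\nu}_t)(du)$, a finite sum; each state-probability converges by (ii), and each integral converges by the same $\Wc_1$-plus-uniform-continuity estimate applied to $f$ in place of $P$ (using Assumption \ref{assum.lipschitz} on $f$ and $\pitilde\in\Pitilde_c$). Summing over $y$ and over $0\le t\le T$ and letting $\eps\to 0$ yields continuity of $V^{\pitilde}(x,\cdot)$. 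The main obstacle is purely bookkeeping in the inductive step — controlling the composition of the state-law perturbation, the population-flow perturbation, and the $\Wc_1$-perturbation of $\pitilde$ all at once — but each piece is routine given the Lipschitz/continuity hypotheses and the finiteness of $[d]$; no new idea beyond the standard "$\sup$-norm of integrand times total variation plus Wasserstein distance times Lipschitz constant" bound is needed.
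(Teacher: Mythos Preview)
Your proposal is correct and follows essentially the same approach as the paper: truncate the series via Assumption \ref{assum.bound}, prove by induction that $\nu\mapsto\mu^{\pitilde,\nu}_t$ and the agent's state law are continuous using $\pitilde\in\Pitilde_c$ together with the Lipschitz hypotheses on $f,P$, and then pass to the limit in each of the finitely many terms. The paper phrases the state-law convergence by introducing an auxiliary time-inhomogeneous chain $z^n_t$ with transition matrices $Q^n_t$, but this is only a notational repackaging of your point (ii).
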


\begin{proof}
	Assumption \ref{assum.lipschitz} and $\pitilde\in \Pitilde_c$ together imply that both $\nu\to f^{\pitilde(x,\nu)}(t,x,\nu)$ and $\nu\to P^{\pitilde(x,\nu)}(x, \nu)$ are continuous for any $x\in[d], t\in \T$. Moreover, an induction argument gives that $\nu \mapsto \mu^{\pitilde,\nu}_t$ is continuous for all $t\in \T$. Take an arbitrary $\eps>0$, Assumption \ref{assum.bound} implies that an $T\in \T$ exists with 
	$$ 
	\sum_{t> T} \sup_{(x,\nu)\in \Delta} |f(1+t, x,\nu, u)|\leq \eps.
	$$
	Now take $\nu^n\to \nu^\infty$ in $\Pc([d])$ and $x\in [d]$. For each $n\in \N\cup\{\infty\}$, define 
	$$Q^n_t:= \left(P^{\pitilde \left(y, \mu^{\pitilde,\nu^n}_t \right)}\left( y,\mu^{\pitilde,\nu^n}_t  \right) \right)_{y\in [d]},\; g^n(t, y):= f^{\pitilde\left( y,\mu^{\pitilde,\nu^n}_t \right)}\left( 1+t,y,\mu^{\pitilde,\nu^n}_t \right) \; \forall y\in [d],\quad \forall t\in \T.$$
	Consider the time-inhomogeneous Markov chain $z^{n}_t$ with transition matrix at time $t$ equal to $Q^n_t$ and $z^n_0=x$.
	Then we can conclude that the law of $(z^n_t)_{0\leq t\leq T}$ converges to the law of $(z^\infty_t)_{0\leq t\leq T}$ as $n\to \infty$, and for each $n\in \N\cup\{\infty\}$ that
	$$\E\left[\sum_{0\leq t\leq T} f^{\pitilde\left( s^{\pitilde,\mu^{\pitilde, \nu^n}}_t, \mu^{\pitilde,\nu^n}_t \right)}\left(1+t,s^{\pitilde,\mu^{\pitilde, \nu^n}}_t,\mu^{\pitilde,\nu^n}_t \right) \bigg | s^{\pitilde,\mu^{\pitilde, \nu^n}}_0=x \right]=\E\left[  \sum_{0\leq t\leq T} g^n(t, z^n_t) \right].$$
	As a consequence, 
	$$
	\lim_{n\to\infty}\left| V^\pitilde(x,\nu^n)-V^\pitilde(x,\nu^\infty) \right|\leq 2\eps,
	$$
	and the proof is complete by the arbitrariness of $\eps$. 
\end{proof}

Through Lemma \ref{prop.lln.Nmu} to Lemma \ref{prop.WN}, we consider a sequence $\left( \pitilde^N \right)_{N\in \N\cup\{\infty\}}\subset \Pitilde_c$ such that \eqref{eq.thm.Nequi1} holds. And for each $N\in \N$ we define the replicating $N$-tuple $\pitildebm^N:=(\pitilde^N,\cdots, \pitilde^N)$. %The next lemma tells tht the empirical flow under $\bm \pitilde^N$ converges to the population flow under $\pitilde^\infty$ in probability uniformly on initial population values.

\begin{Lemma}\label{prop.lln.Nmu}
	For each $t\in \T$, $\mu^{N,\bm\pitilde^N,\nu(N,x)}_t\to\mu^{\pitilde^\infty,\nu}_t$ in probability uniformly over $\Delta$. That is, 
	\be\label{eq.prop.Nmu}  
	\lim_{N\to \infty}\sup_{(x,\nu)\in \Delta}\P\left(\left| \mu^{N,\bm\pitilde^N,\nu(N,x)}_t-\mu^{\pitilde^\infty,\nu}_t \right|\geq \eps\right)=0\quad \forall \eps>0, \quad  \forall t\in \T.
	\ee 
\end{Lemma}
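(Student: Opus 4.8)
The plan is to prove \eqref{eq.prop.Nmu} by induction on $t$, mirroring the induction in the proof of Lemma \ref{lm.vmu.converge} and in Proposition \ref{prop.precommittedN}, but now tracking uniform-in-$(x,\nu)$ convergence in probability rather than convergence of deterministic quantities. For the base case $t=0$, note that $\mu^{N,\bm\pitilde^N,\nu(N,x)}_0=\nu(N,x)$ is deterministic, and \eqref{eq.muNx.converge} gives $\sup_{(x,\nu)\in\Delta}|\nu(N,x)-\nu|\to 0$; since $\mu^{\pitilde^\infty,\nu}_0=\nu$, the claim holds trivially (in fact with the probability being $0$ for large $N$).

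For the inductive step, assume \eqref{eq.prop.Nmu} at time $t$. Write $\mu^N_t:=\mu^{N,\bm\pitilde^N,\nu(N,x)}_t$ and $\mu^\infty_t:=\mu^{\pitilde^\infty,\nu}_t$ for brevity. I would decompose
\[
\mu^N_{t+1}-\mu^\infty_{t+1}
= \underbrace{\Big(\mu^N_{t+1}-\textstyle\frac1N\sum_{i=1}^N e_{s^{N,i}_t}\cdot P^{\pitilde^N(\mu^N_t)}(\cdot)\Big)}_{(\mathrm{I})}
+ \underbrace{\Big(\mu^N_t\cdot P^{\pitilde^N(\mu^N_t)}(\mu^N_t)-\mu^N_t\cdot P^{\pitilde^\infty(\mu^\infty_t)}(\mu^\infty_t)\Big)}_{(\mathrm{II})}
+ \underbrace{\big(\mu^N_t-\mu^\infty_t\big)\cdot P^{\pitilde^\infty(\mu^\infty_t)}(\mu^\infty_t)}_{(\mathrm{III})}.
\]
Term $(\mathrm{III})$ has norm $\le\sqrt d\,|\mu^N_t-\mu^\infty_t|$, which goes to $0$ in probability uniformly over $\Delta$ by the induction hypothesis. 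Term $(\mathrm{I})$ is a conditional-mean-zero fluctuation: conditionally on $(s^{N,1}_t,\dots,s^{N,N}_t)$, the next states are generated independently, so $(\mathrm{I})$ is an average of $N$ conditionally independent bounded centered random vectors; a conditional second-moment (Chebyshev) estimate — essentially \cite[Lemma A.2]{budhiraja2015long} as used in Proposition \ref{prop.precommittedN} — bounds $\E[|(\mathrm I)|]$ by $C/\sqrt N$ uniformly in the starting configuration, hence uniformly over $\Delta$. For term $(\mathrm{II})$ I would bound $|P^{\pitilde^N(y,\mu^N_t)}(y,\mu^N_t)-P^{\pitilde^\infty(y,\mu^\infty_t)}(y,\mu^\infty_t)|$ for each fixed $y$ by splitting into $|P^{\pitilde^N(y,\mu^N_t)}(y,\mu^N_t)-P^{\pitilde^N(y,\mu^\infty_t)}(y,\mu^\infty_t)|$ plus $|P^{\pitilde^N(y,\mu^\infty_t)}(y,\mu^\infty_t)-P^{\pitilde^\infty(y,\mu^\infty_t)}(y,\mu^\infty_t)|$. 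The second of these is bounded, via the Lipschitz property \eqref{eq.assum3.0} in $u$, by $L_1\Wc_1(\pitilde^N(y,\mu^\infty_t),\pitilde^\infty(y,\mu^\infty_t))$ after using the dual formulation \eqref{eq.cha.W1}; this is $\le L_1\sup_{(x,\nu)\in\Delta}\Wc_1(\pitilde^N(x,\nu),\pitilde^\infty(x,\nu))\to 0$ by \eqref{eq.thm.Nequi1}, uniformly in everything. The first term requires controlling $P^{\pitilde^N(y,\cdot)}$ as a function of the measure argument when the measure itself is random and close to $\mu^\infty_t$; here I would use Assumption \ref{assum.lipschitz}\eqref{eq.assum3.1} for the dependence of $P$ on $\nu$ together with the weak continuity $\pitilde^N\in\Pitilde_c$, but to get a \emph{uniform} bound I would first observe that $\mu^\infty_t=\mu^{\pitilde^\infty,\nu}_t$ ranges over a compact set as $\nu$ ranges over $\Pc([d])$ (by Lemma \ref{lm.pi.cts}'s continuity argument, $\nu\mapsto\mu^{\pitilde^\infty,\nu}_t$ is continuous), so a modulus-of-continuity / uniform-equicontinuity type estimate can be invoked on a compact set. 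Combining the three terms and using that a sum of three quantities each converging to $0$ in probability uniformly over $\Delta$ again does so completes the induction.

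\textbf{Main obstacle.} The delicate point is term $(\mathrm{II})$, specifically obtaining convergence that is genuinely \emph{uniform over all of $\Delta$}: the policies $\pitilde^N$ are only assumed weakly continuous in $\nu$ (membership in $\Pitilde_c$), not uniformly equicontinuous, so controlling $|P^{\pitilde^N(y,\mu^N_t)}(y,\mu^N_t)-P^{\pitilde^N(y,\mu^\infty_t)}(y,\mu^\infty_t)|$ uniformly in $N$ and in the starting data is not immediate. The resolution I anticipate is that this difference need not be made small pointwise in $N$; rather, one writes it as $[P^{\pitilde^N(y,\mu^N_t)}(y,\mu^N_t)-P^{\pitilde^N(y,\mu^N_t)}(y,\mu^\infty_t)]+[P^{\pitilde^N(y,\mu^N_t)}(y,\mu^\infty_t)-P^{\pitilde^N(y,\mu^\infty_t)}(y,\mu^\infty_t)]$, handling the first bracket by \eqref{eq.assum3.1} (Lipschitz in $\nu$, giving $L_2|\mu^N_t-\mu^\infty_t|\to 0$ in probability by induction), and the second bracket by \eqref{eq.assum3.0} together with the triangle inequality $\Wc_1(\pitilde^N(y,\mu^N_t),\pitilde^N(y,\mu^\infty_t))\le\Wc_1(\pitilde^N(y,\mu^N_t),\pitilde^\infty(y,\mu^N_t))+\Wc_1(\pitilde^\infty(y,\mu^N_t),\pitilde^\infty(y,\mu^\infty_t))+\Wc_1(\pitilde^\infty(y,\mu^\infty_t),\pitilde^N(y,\mu^\infty_t))$, where the outer two terms are $\le\sup_\Delta\Wc_1(\pitilde^N,\pitilde^\infty)\to0$ by \eqref{eq.thm.Nequi1} and the middle term uses weak continuity of the single fixed limit policy $\pitilde^\infty$ on the compact range of $\mu^\infty_t$ — which is uniform there. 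This reduces everything to properties of the \emph{limit} policy $\pitilde^\infty$ alone plus the uniform bound \eqref{eq.thm.Nequi1}, sidestepping the lack of equicontinuity of the sequence. I would also remark that all $O(1/\sqrt N)$ bounds for the fluctuation terms are uniform in the initial configuration $(x,\nu(N,x))$ precisely because the per-agent transition kernels are uniformly bounded and the number of agents is $N$, so no further hypothesis is needed for that piece.
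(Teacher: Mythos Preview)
Your proposal is correct and follows essentially the same route as the paper: induction on $t$ with the deterministic base case via \eqref{eq.muNx.converge}, a Chebyshev/second-moment bound for the conditional-mean-zero fluctuation (the paper's term (I)), and the reduction of the policy-continuity piece to the single limit policy $\pitilde^\infty$ via the triangle inequality together with the uniform bound \eqref{eq.thm.Nequi1}. The paper organizes the decomposition slightly differently (it first swaps the second argument of $P$ from $\mu^N_t$ to $\mu^\infty_t$ using \eqref{eq.assum3.1} and then splits the remainder into four pieces (I)--(IV)), but your terms (I)--(III) and the subsequent splitting of (II) land on exactly the same estimates, and your identification of the ``main obstacle'' and its resolution through $\Wc_1(\pitilde^N,\pitilde^N)\le \Wc_1(\pitilde^N,\pitilde^\infty)+\Wc_1(\pitilde^\infty,\pitilde^\infty)+\Wc_1(\pitilde^\infty,\pitilde^N)$ is precisely how the paper's terms (III) and (IV) are handled.
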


\begin{proof}
	We prove the result by induction on $t$. 
	The result holds for $t=0$ due to \eqref{eq.muNx.converge} and $\mu^{N,\bm\pitilde^N,\nu(N,x)}_0=\nu(N,x)$.
	%$$\sup_{(x,\nu)\in \Delta}\left| -\nu \right|=\sup_{(x,\nu)\in \Delta}|\nu(N,x)-\nu|\to 0, \text{ as } N\to\infty.$$	
	Now suppose that 
	\be\label{eq.prop.Nmuinduction}  
	\lim_{N\to \infty}\sup_{(x,\nu)\in \Delta}\P\left( \left| \mu^{N,\bm\pitilde^N,\nu(N,x)}_t-\mu^{\pitilde^\infty,\nu}_t \right|\geq \delta \right)=0\quad \forall \delta>0,
	\ee 
	and we prove the result for $t+1$.
	
	Take an arbitrary $(x,\nu)\in \Delta$, for each $N\in \N$,
	let $\alpha^{N,i}_t$ be mutually independent random action values simulated by $\pitilde^N
		\left( s^{N,i, \nu(N,x)}_t,  \mu^{N,\bm\pitilde^N, \nu(N,x)}_t \right)$. %and denote by $P\left(\mu^{N,\bm\pitilde^N, \nu(N,x)}_t,\alpha^{N,i}\right)$ the transition matrix with the $x$-th row equal to $P\left(x,\mu^{N,\bm\pitilde^N, \nu(N,x)}_t,\alpha^{N,i} \right)$ for all $x\in [d]$. 
		Then%for all $(x,\nu)\in \Delta$ and $1\leq i\leq N$,
	\be\label{eq.prop.Nmu0}  
	\E\left[  e_{s^{N,i}_t}\cdot P\left(\mu^{\pitilde^\infty,\nu}_t,\alpha^{N,i}_t\right)- e_{s^{N,i}_t}  \cdot P^{\pitilde^N\left( \mu^{N,\bm\pitilde^N,\nu(N,x)}_t \right)}\left( \mu^{\pitilde^\infty,\nu}_t \right)  \bigg| s^{N,i}_t, \mu^{N,\bm\pitilde^N, \nu(N,x)}_t \right]=0.
	\ee 
	Meanwhile, a direct calculation gives that 
	\begin{align}
		&\left| \mu^{N,\bm\pitilde^N,\nu(N,x)}_{t+1}- \mu^{\pitilde^\infty,\nu}_{t+1} \right| \notag\\
		&\leq \left| \frac{1}{N}\sum_{i=1}^N  e_{s^{N,i}_t}\cdot P\left(\mu^{N,\bm\pitilde^N, \nu(N,x)}_t,\alpha^{N,i}_t\right)-  \frac{1}{N}\sum_{i=1}^N  e_{s^{N,i}_t}\cdot P\left(\mu^{\pitilde^\infty,\nu}_t,\alpha^{N,i}_t\right)\right| \notag\\
		&\qquad +\left|  \frac{1}{N}\sum_{i=1}^N  e_{s^{N,i}_t}\cdot P\left(\mu^{\pitilde^\infty,\nu}_t,\alpha^{N,i}_t\right) - \mu^{\pitilde^\infty,\nu}_t\cdot P^{\pitilde^\infty\left( \mu^{\pitilde^\infty,\nu}_t \right)} (\mu^{\pitilde^\infty,\nu}_t) \right|. \notag
	\end{align}
	By Assumption \ref{assum.lipschitz},  the first term on the right-hand side of above inequality is bounded by $L_2\big| \mu^{N,\bm\pitilde^N, \nu(N,x)}_t-\mu^{\pitilde^\infty, \nu}_t \big|$,% since the difference always gives one row$\sqrt{d} L_2\big| \mu^{N,\bm\pitilde^N, \nu(N,x)}_t-\mu^{\pitilde^\infty, \nu}_t \big|$
	 which converges to 0 in probability uniformly over $\Delta$ due to induction hypothesis \eqref{eq.prop.Nmuinduction}. Hence, to prove the desired result, it suffices to prove that
	\be\label{eq.prop.Nmu1}  
	\lim_{N\to\infty} \sup_{(x,\nu)\in \Delta} \P\left( \left|  \frac{1}{N}\sum_{i=1}^N  e_{s^{N,i}_t}\cdot P\left(\mu^{\pitilde^\infty,\nu}_t,\alpha^{N,i}_t\right) - \mu^{\pitilde^\infty,\nu}_t\cdot P^{\pitilde^\infty\left( \mu^{\pitilde^\infty,\nu}_t \right)}  \left( \mu^{\pitilde^\infty,\nu}_t \right) \right| \geq \delta \right)=0 ,\quad \forall \delta>0.
	\ee 
	Notice that
	%take $\varpi^N=\pitilde\left( \mu^{N,\bm\pitilde^N,\nu(N,x)}_t \right)$ and $\varpi^\infty=\pitilde\left( \mu^{\pitilde^\infty,\nu}_t \right)$ in Lemma \ref{lm.lln} and notice that $\pitilde \left( \mu^{N,\bm\pitilde^N,\nu(N,x)}_t \right)\to \pitilde\left( \mu^{\pitilde^\infty,\nu}_t \right)$ weakly a.s. due to $\pitilde\in \Pitilde_c$, then 
	\begin{align*}
		& \left|  \frac{1}{N}\sum_{i=1}^N  e_{s^{N,i}_t}\cdot P\left(\mu^{\pitilde^\infty,\nu}_t,\alpha^{N,i}_t\right) - \mu^{\pitilde^\infty,\nu}_t\cdot P^{\pitilde^\infty\left( \mu^{\pitilde^\infty,\nu}_t \right)} \left( \mu^{\pitilde^\infty,\nu}_t \right) \right| \notag \\
		&\leq  \left|  \frac{1}{N}\sum_{i=1}^N  e_{s^{N,i}_t}\cdot P\left(\mu^{\pitilde^\infty, \nu}_t,\alpha^{N,i}_t\right)-\left( \frac{1}{N}\sum_{i=1}^N  e_{s^{N,i}_t} \right) \cdot P^{\pitilde^N\left( \mu^{N,\bm\pitilde^N,\nu(N,x)}_t \right)} \left( \mu^{\pitilde^\infty, \nu}_t \right) \right| \quad \text{(I)}\notag \\
		& \quad + \left| \left( \frac{1}{N}\sum_{i=1}^N  e_{s^{N,i}_t} \right) \cdot P^{\pitilde^N\left( \mu^{N,\bm\pitilde^N,\nu(N,x)}_t \right)} \left( \mu^{\pitilde^\infty, \nu}_t \right)  -  \mu^{\pitilde^\infty,\nu}_t \cdot P^{\pitilde^N\left( \mu^{N,\bm\pitilde^N,\nu(N,x)}_t \right)}\left( \mu^{\pitilde^\infty, \nu}_t \right) \right|  \quad \text{(II)} \notag \\
		&\quad + \left|  \mu^{\pitilde^\infty,\nu}_t \cdot P^{\pitilde^N\left( \mu^{N,\bm\pitilde^N,\nu(N,x)}_t \right)}\left( \mu^{\pitilde^\infty, \nu}_t \right) - \mu^{\pitilde^\infty,\nu}_t\cdot P^{\pitilde^\infty\left( \mu^{N,\bm\pitilde^N,\nu(N,x)}_t \right)}\left( \mu^{\pitilde^\infty, \nu}_t \right)  \right| \quad \text{(III)} \notag \\
		&\quad +  \left|  \mu^{\pitilde^\infty,\nu}_t \cdot P^{\pitilde^\infty\left( \mu^{N,\bm\pitilde^N,\nu(N,x)}_t \right)}\left( \mu^{\pitilde^\infty, \nu}_t \right) - \mu^{\pitilde^\infty,\nu}_t\cdot P^{\pitilde^\infty(\mu^{\pitilde^\infty,\nu})}\left( \mu^{\pitilde^\infty, \nu}_t \right)  \right| \quad \text{(IV)}.
	\end{align*}
	And we prove the uniform convergence on $\Delta$ in probability for each of terms (I)--(IV) above.
	
	As for term (I), by Markov's inequality,
	\begin{align}
		&\P\left(\left|   \frac{1}{N}\sum_{i=1}^N  e_{s^{N,i}_t}\cdot P\left(\mu^{\pitilde^\infty,\nu}_t,\alpha^{N,i}_t\right)-\left( \frac{1}{N}\sum_{i=1}^N  e_{s^{N,i}_t} \right) \cdot P^{\pitilde^N\left( \mu^{N,\bm\pitilde^N,\nu(N,x)}_t \right)}\left( \mu^{\pitilde^\infty,\nu}_t \right)  \right| \geq \delta \right) \notag\\
		&\leq \E\left[\left(  \frac{1}{N}\sum_{i=1}^N   \left( e_{s^{N,i}_t}\cdot P\left(\mu^{\pitilde^\infty}_t,\alpha^{N,i}_t\right)- e_{s^{N,i}_t}  \cdot P^{\pitilde^N\left( \mu^{N,\bm\pitilde^N,\nu(N,x)}_t \right)}\left( \mu^{\pitilde^\infty,\nu}_t \right) \right) \right)^2  \right]\Bigg/\delta^2  \notag \\
		%& = \E\left[ \sup_{(x,\nu)\in \Delta} \left| \frac{1}{N}\sum_{i=1}^N \left( e_{s^{N,i}_t}\cdot P\left(\mu^{\pitilde^\infty}_t,\alpha^{N,i}_t\right)- e_{s^{N,i}_t}  \cdot P^{\pitilde^N\left( \mu^{N,\bm\pitilde^N,\nu(N,x)}_t \right)}\left( \mu^{\pitilde^\infty,\nu}_t \right) \right)\right|^4   \Bigg| 
		%	\bm s^{N}_t  \right]\Bigg/\eps^4 \notag\\
		&=\frac{1}{N^2 \delta^2}\sum_{1\leq i\leq N} \E\left[ \left( e_{s^{N,i}_t}\cdot P\left(\mu^{\pitilde^\infty,\nu}_t,\alpha^{N,i}_t\right)- e_{s^{N,i}_t}  \cdot P^{\pitilde^N\left( \mu^{N,\bm\pitilde^N,\nu(N,x)}_t \right)}\left( \mu^{\pitilde^\infty,\nu}_t \right) \right)^2 \right] \label{eq.lm.lln3} \\
		%&\qquad \qquad \qquad\qquad  \cdot \left( e_{s^{N,i}_t}\cdot P\left(\mu^{\pitilde^\infty,\nu}_t,\alpha^{N,i}_t\right)- e_{s^{N,i}_t}  \cdot P^{\pitilde^N\left( \mu^{N,\bm\pitilde^N,\nu(N,x)}_t \right)}\left( \mu^{\pitilde^\infty,\nu}_t \right)\right)^2\Bigg)  \Bigg| 
		%\bm s^{N}_t \Bigg] \notag\\
		&\leq \frac{4}{N \delta^2}, \notag
	\end{align}
	where the third line is due to the independence of $\alpha^{N,i}_t$ and \eqref{eq.prop.Nmu0}, %the cross parts can be first taken a conditional expectation like \eqref{eq.prop.Nmu0} then cancelled
	 and the last line follows from that $\sup_{(x,\mu)\in \Delta, u\in U}|e_x\cdot P(\mu,u)|\leq 1$.
	Notice that the last line in \eqref{eq.lm.lln3} is independent of $(x,\nu)$, so we have, for any $\delta>0$ that,
	$$
	\sup_{(x,\nu)\in \Delta}  \P\left(\left|   \frac{1}{N}\sum_{i=1}^N  e_{s^{N,i}_t}\cdot P\left(\mu^{\pitilde^\infty,\nu}_t,\alpha^{N,i}_t\right)-\left( \frac{1}{N}\sum_{i=1}^N  e_{s^{N,i}_t} \right) \cdot P^{\pitilde^N\left( \mu^{N,\bm\pitilde^N,\nu(N,x)}_t \right)}\left( \mu^{\pitilde^\infty,\nu}_t \right)  \right| \geq \delta \right)  \to 0.
	$$
	Notice that term (II) is bounded by $\sqrt{d}\left(\sup_{(x,\nu)\in \Delta}\left| \mu^{N,\bm\pitilde^N,\nu(N,x)}_t- \mu^{\pitilde^\infty,\nu}\right|\right)$, hence converges in probability to 0 uniformly over $\Delta$. 
	Term (III) is bounded by 
	$$
	\begin{aligned}
		&\sqrt{d}\sup_{(x,\nu)\in \Delta}  \bigg|  \int_U P\left(x, \mu^{\pitilde^\infty, \nu}_t, u \right) \pitilde^N\left( x, \mu^{N,\bm\pitilde^N,\nu(N,x)}_t \right)(du)\\
		&\qquad \quad\;\; -  \int_U P\left(x, \mu^{\pitilde^\infty, \nu}_t, u \right)\pitilde^\infty\left( x, \mu^{N,\bm\pitilde^N,\nu(N,x)}_t \right)(du)   \bigg|\\
		&\leq \sqrt{d}\sup_{(x,\nu)\in \Delta} \sum_{y\in[d]} \bigg|  \int_U P\left(x, \mu^{\pitilde^\infty, \nu}_t, y, u \right) \left[\pitilde^N\left( x, \mu^{N,\bm\pitilde^N,\nu(N,x)}_t \right)(du)-\pitilde^\infty\left( x, \mu^{N,\bm\pitilde^N,\nu(N,x)}_t \right)(du)\right]   \bigg|\\
		&\leq \sqrt{d^3} L_1 \sup_{(x,\nu)\in \Delta} \Wc_1 \left( \pitilde^N\left( x, \mu^{N,\bm\pitilde^N,\nu(N,x)}_t \right), \pitilde^\infty\left( x, \mu^{N,\bm\pitilde^N,\nu(N,x)}_t \right) \right)\\
		&\leq \sqrt{d^3} L_1 \sup_{(y,\mu)\in \Delta}\Wc_1 \left( \pitilde^N(y, \mu), \pitilde^\infty(y,\mu) \right)
		\to 0\quad \text{as }N\to\infty,
	\end{aligned}
	$$
	where the first inequality follows from \eqref{eq.cha.W1} and \eqref{eq.assum3.0}. Therefore, term (III) also converges (in probability) to 0 uniformly over $\Delta$.
	As for term (IV), notice that 
	\begin{align}
		& \sup_{(x,\nu)\in\Delta}\left|  \mu^{\pitilde^\infty,\nu}_t \cdot P^{\pitilde^\infty\left( \mu^{N,\bm\pitilde^N,\nu(N,x)}_t \right)}\left( \mu^{\pitilde^\infty, \nu}_t \right) - \mu^{\pitilde^\infty,\nu}_t\cdot P^{\pitilde^\infty(\mu^{\pitilde^\infty,\nu})}\left( \mu^{\pitilde^\infty, \nu}_t \right)  \right| \label{eq.prop.Nmu5} \\
		&\leq \sqrt{d} \sup_{(x,\nu)\in\Delta}\left|  \int_U P\left(x, \mu^{\pitilde^\infty, \nu}_t, u \right) \pitilde^\infty\left( x, \mu^{N,\bm\pitilde^N,\nu(N,x)}_t \right)(du)-  \int_U P\left(x, \mu^{\pitilde^\infty, \nu}_t, u \right)\pitilde^\infty(x, \mu^{\pitilde^\infty,\nu}) (du)   \right|. \notag
	\end{align}
	Since $\pitilde^\infty\in \Pitilde_c$ and $\Pc([d])$ is compact, for each $x,y\in [d]$, the function
	$$\mu\mapsto \int_U P\left(x, \mu^{\pitilde^\infty, \nu}_t, y, u \right)\pitilde^\infty(x, \mu) (du),$$
	which maps from $\Pc([d])$ to $\R$, is continuous, hence is uniformly continuous. Then for any $\delta>0$, there exists $\delta_0>0$ such that the right-hand side of \eqref{eq.prop.Nmu5} less than $\delta$ whenever $\left| \mu^{N,\bm\pitilde^N,\nu(N,x)}_t- \mu^{\pitilde^\infty,\nu}\right|$ $<$ $\delta_0$. By induction assumption \eqref{eq.prop.Nmuinduction}, for such $\delta_0$ there exists $N_0$ such that 
	$$\sup_{(x,\nu)\in \Delta} \P\left(\left| \mu^{N,\bm\pitilde^N,\nu(N,x)}_t- \mu^{\pitilde^\infty,\nu}\right| \geq \delta_0\right)\leq \eps\quad \forall N\geq N_0.$$
%which results in that the probability of the right-hand side of \eqref{eq.prop.Nmu5} being bigger or equal to $\delta$ is less or equal to $\eps$ for all $N\geq N_0$. 
	As a consequence, we have for all $N\geq N_0$ that 
	$$
	\sup_{(x,\nu)\in \Delta} \P\left( \left|  \mu^{\pitilde^\infty,\nu}_t \cdot P^{\pitilde^\infty\left( \mu^{N,\bm\pitilde^N,\nu(N,x)}_t \right)}\left( \mu^{\pitilde^\infty, \nu}_t \right) - \mu^{\pitilde^\infty,\nu}_t\cdot P^{\pitilde^\infty(\mu^{\pitilde^\infty,\nu})}\left( \mu^{\pitilde^\infty, \nu}_t \right)  \right|  \geq \delta \right)\leq \eps.
	$$
	That is, term (IV) converges in probability to 0 uniformly over $\Delta$. In sum, \eqref{eq.prop.Nmu1} is verified.
\end{proof}

\begin{Lemma}\label{prop.VN} The auxiliary functions defined in \eqref{eq.V.relax} and \eqref{eq.def.VNagentpitilde} satisfy that
	\bee
	\lim_{N\to \infty}\sup_{(x,\nu)\in \Delta} \left| V^{N,\pitilde^N} \left(x,\nu(N,x)\right)-V^{{\pitilde^\infty}}(x,\nu) \right|=0.
	\eee 
\end{Lemma}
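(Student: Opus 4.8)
The plan is to truncate the infinite horizon, establish a uniform (over $\Delta$) weak convergence of the joint law of agent $1$'s state and the empirical flow, and then pass to the limit term by term in the truncated sum. First, by Assumption \ref{assum.bound}, for any $\eps>0$ pick $T\in\T$ with $\sum_{t>T}\sup_{(x,\nu)\in\Delta,\,u\in U}|f(1+t,x,\nu,u)|\le\eps$; then the tails $t>T$ of both $V^{N,\pitilde^N}(x,\nu(N,x))$ and $V^{\pitilde^\infty}(x,\nu)$ are dominated by $\eps$ uniformly over $\Delta$, so it suffices to control the partial sums $\sum_{t=0}^{T}$. Write $\mu^N_t:=\mu^{N,\bm\pitilde^N,\nu(N,x)}_t$, and let $s^\infty_t:=s^{\pitilde^\infty,\mu^{\pitilde^\infty,\nu}}_t$ denote the MFG single-agent chain started at $x$.

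I would prove by induction on $t$ that
\[
\sup_{(x,\nu)\in\Delta}\big|\P(s^{N,1}_t=y)-\P(s^\infty_t=y)\big|\to 0\quad\forall y\in[d],
\]
the base case $t=0$ being trivial since $s^{N,1}_0=x=s^\infty_0$. For the inductive step, the conditional law of $e_{s^{N,1}_{t+1}}$ given $(s^{N,1}_t,\mu^N_t)$ is the row $P^{\pitilde^N(s^{N,1}_t,\mu^N_t)}(s^{N,1}_t,\mu^N_t)$, while that of $e_{s^\infty_{t+1}}$ given $s^\infty_t$ is $P^{\pitilde^\infty(s^\infty_t,\mu^{\pitilde^\infty,\nu}_t)}(s^\infty_t,\mu^{\pitilde^\infty,\nu}_t)$; for each fixed $y\in[d]$,
\[
\big|P^{\pitilde^N(y,\mu^N_t)}(y,\mu^N_t)-P^{\pitilde^\infty(y,\mu^{\pitilde^\infty,\nu}_t)}(y,\mu^{\pitilde^\infty,\nu}_t)\big|\le L_2\,\big|\mu^N_t-\mu^{\pitilde^\infty,\nu}_t\big|+L_1\sup_{(z,\mu)\in\Delta}\Wc_1\!\big(\pitilde^N(z,\mu),\pitilde^\infty(z,\mu)\big)
\]
by \eqref{eq.assum3.0}, \eqref{eq.assum3.1} and \eqref{eq.cha.W1}, and the right-hand side tends to $0$ in probability uniformly over $\Delta$ by Lemma \ref{prop.lln.Nmu} and \eqref{eq.thm.Nequi1}. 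Combining this with the induction hypothesis and the finiteness of $[d]$ gives the claim for $t+1$; and since $\mu^{\pitilde^\infty,\nu}_t$ is deterministic and $\mu^N_t\to\mu^{\pitilde^\infty,\nu}_t$ in probability uniformly, the joint law of $(\mu^N_t,e_{s^{N,1}_t})$ converges to that of $(\mu^{\pitilde^\infty,\nu}_t,e_{s^\infty_t})$, uniformly over $\Delta$.

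Next, for each $0\le t\le T$ I would show
\[
\sup_{(x,\nu)\in\Delta}\Big|\E\big[f^{\pitilde^N(s^{N,1}_t,\mu^N_t)}(1+t,s^{N,1}_t,\mu^N_t)\big]-\E\big[f^{\pitilde^\infty(s^\infty_t,\mu^{\pitilde^\infty,\nu}_t)}(1+t,s^\infty_t,\mu^{\pitilde^\infty,\nu}_t)\big]\Big|\to 0 .
\]
Using boundedness (Assumption \ref{assum.bound}) and the $L_2$-Lipschitz dependence of $f$ on $\nu$ (from \eqref{eq.assum3.1}, integrated against the policy) together with Lemma \ref{prop.lln.Nmu}, the random $\mu^N_t$ in the first expectation may be replaced by the deterministic $\mu^{\pitilde^\infty,\nu}_t$ at the cost of a uniformly vanishing error; the term then equals $\sum_{y\in[d]}\P(s^{N,1}_t=y)\,f^{\pitilde^N(y,\mu^{\pitilde^\infty,\nu}_t)}(1+t,y,\mu^{\pitilde^\infty,\nu}_t)$. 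By the induction above $\P(s^{N,1}_t=y)\to\P(s^\infty_t=y)$ uniformly; by \eqref{eq.cha.W1} and \eqref{eq.assum3.0}, $|f^{\pitilde^N(y,\mu^{\pitilde^\infty,\nu}_t)}(1+t,y,\mu^{\pitilde^\infty,\nu}_t)-f^{\pitilde^\infty(y,\mu^{\pitilde^\infty,\nu}_t)}(1+t,y,\mu^{\pitilde^\infty,\nu}_t)|\le L_1\sup_\Delta\Wc_1(\pitilde^N,\pitilde^\infty)\to0$; and the finitely many limits $f^{\pitilde^\infty(y,\mu^{\pitilde^\infty,\nu}_t)}(1+t,y,\mu^{\pitilde^\infty,\nu}_t)$ are uniformly bounded, so the displayed convergence follows (noting that $\E[f^{\pitilde^\infty(s^\infty_t,\mu^{\pitilde^\infty,\nu}_t)}(1+t,s^\infty_t,\mu^{\pitilde^\infty,\nu}_t)]$ is the $t$-th summand of $V^{\pitilde^\infty}(x,\nu)$). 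Summing over $0\le t\le T$ and adding the two $\eps$ tail bounds, then letting $\eps\downarrow0$, finishes the proof.

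The main obstacle is preserving uniformity over $\Delta$ at every stage, especially in the induction: one must simultaneously control the random empirical flow $\mu^N_t$, whose convergence to the deterministic MFG flow is only available in probability uniformly via Lemma \ref{prop.lln.Nmu}, and the marginal of agent $1$, whose one-step transition is driven by that very flow and by the policy $\pitilde^N$. A plain term-by-term estimate would destroy uniformity unless the Lipschitz bounds \eqref{eq.assum3.0}--\eqref{eq.assum3.1} and the uniform policy convergence \eqref{eq.thm.Nequi1} are used to decouple the two sources of error as above; the residual correlation between $s^{N,1}_t$ and $\mu^N_t$ in the last step is a minor nuisance, handled by exploiting that the limiting flow is deterministic.
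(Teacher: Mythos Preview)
Your approach is essentially the same as the paper's: truncate via Assumption~\ref{assum.bound}, prove by induction the uniform-in-$\Delta$ convergence in law of $(e_{s^{N,1}_t},\mu^N_t)$ to $(e_{s^\infty_t},\mu^{\pitilde^\infty,\nu}_t)$ using Lemma~\ref{prop.lln.Nmu}, then pass to the limit in each of the finitely many summands. The paper carries out exactly this program, introducing an auxiliary chain $(z^{x,\nu}_t)$ playing the role of your $s^\infty_t$.

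One small slip to fix: your displayed Lipschitz bound
\[
\big|P^{\pitilde^N(y,\mu^N_t)}(y,\mu^N_t)-P^{\pitilde^\infty(y,\mu^{\pitilde^\infty,\nu}_t)}(y,\mu^{\pitilde^\infty,\nu}_t)\big|\le L_2\,\big|\mu^N_t-\mu^{\pitilde^\infty,\nu}_t\big|+L_1\sup_{\Delta}\Wc_1(\pitilde^N,\pitilde^\infty)
\]
is missing a third term, namely $L_1\,\Wc_1\big(\pitilde^\infty(y,\mu^N_t),\pitilde^\infty(y,\mu^{\pitilde^\infty,\nu}_t)\big)$, which accounts for moving the evaluation point of $\pitilde^\infty$ from $\mu^N_t$ to $\mu^{\pitilde^\infty,\nu}_t$. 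This extra term also vanishes in probability uniformly over $\Delta$ because $\pitilde^\infty\in\Pitilde_c$ is uniformly continuous on the compact $\Pc([d])$, so the argument goes through once you add it; the paper handles this explicitly (cf.\ its treatment of the analogous terms (III$'$) and (IV)). The same remark applies when you later replace $\mu^N_t$ by $\mu^{\pitilde^\infty,\nu}_t$ inside $f^{\pitilde^N(\cdot,\cdot)}$.
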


\begin{proof}
	By Assumption \ref{assum.bound}, for any $\eps$, there exists $T$ such that 
	\be\label{eq.prop.VN0} 
	\sum_{t>T} \sup_{u\in U, (x,\mu)\in \Delta} |f(t,x,\mu,u)|\leq \eps.
	\ee 
	By applying Lemma \ref{prop.lln.Nmu}, we have that 
	\be\label{eq.prop.VNmu} 
	\lim_{N\to\infty} \sup_{0\leq t\leq T, (x,\nu)\in \Delta} \P\left(\left|\mu^{N,\bm\pitilde^N,\nu(N,x)}_t- \mu^{{\pitilde^\infty},\nu}_t\right|>\delta\right) =0\quad \forall \delta>0.
	\ee 
	
	We first prove for each $y\in[d]$ that
	\be\label{eq.prop.VNP}  
	\begin{aligned}
	&\lim_{N\to\infty}\sup_{0\leq t\leq T, (x,\nu)\in \Delta } \E\left[\Bigg| P^{\pitilde^N\left(y,  \mu^{N,\bm\pitilde^N,\nu(N,x)}_t\right)}\left( y,\mu^{N,\bm\pitilde^N,\nu(N,x)}_t \right)
- P^{{\pitilde^\infty} \left(y, \mu^{{\pitilde^\infty},\nu}_t \right)}\left( y,\mu^{{\pitilde^\infty},\nu}_t \right) \Bigg| \right] =  0.
	\end{aligned}
	\ee 
	Notice for each $y\in[d]$ that
	\begin{align*}
		&\sup_{0\leq t\leq T, (x,\nu)\in \Delta }\E\left[\left| P^{\pitilde^N\left(y, \mu^{N,\bm\pitilde^N,\nu(N,x)}_t\right)}\left( y,\mu^{N,\bm\pitilde^N,\nu(N,x)}_t \right)- P^{{\pitilde^\infty} \left( y, \mu^{{\pitilde^\infty},\nu}_t \right) }\left( y ,\mu^{{\pitilde^\infty},\nu}_t \right)\right|\right] \\
		&\leq \sup_{0\leq t\leq T, (x,\nu)\in \Delta}\E\left[\left| P^{\pitilde^N\left( y, \mu^{N,\bm\pitilde^N,\nu(N,x)}_t\right)}\left( y,\mu^{N,\bm\pitilde^N,\nu(N,x)}_t \right)- P^{\pitilde^N\left( y, \mu^{N,\bm\pitilde^N,\nu(N,x)}_t\right)}\left( y, \mu^{{\pitilde^\infty},\nu}_t \right) \right| \right] (\text{I'})\\
		&\qquad +\sup_{0\leq t\leq T, (x,\nu)\in \Delta}\E\left[\left| P^{\pitilde^N\left( y, \mu^{N,\bm\pitilde^N,\nu(N,x)}_t\right)}\left( y, \mu^{{\pitilde^\infty},\nu}_t \right)- P^{\pitilde^\infty\left( y, \mu^{N,\bm\pitilde^N,\nu(N,x)}_t\right)}\left( y, \mu^{{\pitilde^\infty},\nu}_t \right) \right| \right] \; (\text{II'})\\
		&\qquad + \sup_{0\leq t\leq T, (x,\nu)\in \Delta}\E\left[\left| P^{\pitilde^\infty\left( y, \mu^{N,\bm\pitilde^N,\nu(N,x)}_t\right)}\left( y, \mu^{{\pitilde^\infty},\nu}_t \right) - P^{\pitilde^\infty\left( y, \mu^{{\pitilde^\infty},\nu}_t \right)}\left( y, \mu^{{\pitilde^\infty},\nu}_t \right)  \right| \right] \; (\text{III'}).
	\end{align*}
	By \eqref{eq.assum3.1}, the difference in the expectation of term (I') above is bounded by $$L_2\sup_{0\leq t\leq T, (x,\nu)\in \Delta} \bigg|\mu^{N,\bm\pitilde^N,\nu(N,x)}_t - \mu^{{\pitilde^\infty},\nu}_t \bigg|,$$ 
	then \eqref{eq.prop.VNmu} implies that term (I') converges to 0 as $N\to\infty$. As for term (II') (resp. term (III')) above, a similar argument as for term (III) (resp. term (IV)) in the proof of Lemma \ref{prop.lln.Nmu} shows the difference inside the expectation converges in probability to 0 uniformly on $\Delta$ and $0\leq t\leq T$ as $N\to \infty$. Therefore, both term (II') and term (III') converge to 0 as $N\to\infty$, and \eqref{eq.prop.VNP} holds.

{\color{black}Next, 
%we construct an auxiliary Markov chain. 
for each $(x,\nu)\in \Delta$ and $N\geq 2$,
%denote by $\left(z^{N,(x,\nu)}_t\right)_{t\in \T}$ the time-inhomogeneous  markov chain with $z^{N, (x,\nu)}_0=x$ and transition matrix at $t$ as 
%$$Q^{N,(x,\nu)}_t:= \left(\E\left[ P^{\pitilde^N\left(y, \mu^{N,\bm\pitilde^N,\nu(N,x)}_t\right)}\left( y,\mu^{N,\bm\pitilde^N,\nu(N,x)}_t \right)\right]\right)_{y\in[d]}.$$ 
we consider the dynamics of $\left( s^{N,1}_t \right)_{0\leq t\leq T}$ under policy $\pitilde^N$ and flow $\mu^{\bm \pitilde^N, \nu(N,x)}$ and the dynamic of an auxiliary Markov chain $\left( z^{x,\nu}_t\right)_{0\leq t\leq T}$ with initial values $s^{N,1}_0=x=z^{x,\nu}_0$, where $\left( z^{x,\nu}_t\right)_{0\leq t\leq T}$ is guided by the deterministic transition matrices 
$$
Q_t:= \bigg(P^{\pitilde^\infty\left(y,\mu^{\pitilde^\infty,\nu}_t  \right)}\left( y,\mu^{\pitilde^\infty,\nu}_t,z \right)\bigg)_{y,z\in[d]} \text{ for $0\leq t\leq T-1$}. 
$$
%By treating $\Lc(z^{x,\nu}_t)$ (resp. $\Lc(s^{N,1}_t)$), the distribution of $z^{x,\nu}_t$ (reps. the distribution of $s^{N,1}_t$), as a deterministic value (resp. a random value) in $\Pc([d])$ \footnote{For each $t\geq 0$, the randomness of $\Lc\left( s^{N,1}_{t+1}\right)$ comes from the random variables $\mu^{N,\bm\pitilde^N,\nu(N,x)}$ and $\alpha^{N,1}_t$, and $\Lc\left(z^{x,\nu}_{t+1} \right)$ is deterministic due to $Q_t$ being deterministic.}, we have 
%$$
%\Lc\left( s^{N,1}_{t+1}\right)= \Lc\left( s^{N,1}_{t} \right)\cdot P\left(\mu^{N,\bm\pitilde^N,\nu(N,x)}_t, \alpha^{N,1}_t \right),\quad \Lc\left(z^{x,\nu}_{t+1} \right)= \Lc\left( z^{x,\nu}_t \right)\cdot  Q_t,
%$$
%where $\alpha^{N,1}_t\sim \pitilde^N\left(s^{N,1}_t, \mu^{N,\bm\pitilde^N,\nu(N,x)}_t\right)$.
We show by induction that
\be\label{eq:prop.VNnew}  
\left( e_{s^{N,1}_{t}}, \mu^{\bm \pitilde^N, \nu(N,x)}_t \right) \to \left(e_{z^{x,\nu}_{t}},  \mu^{\pitilde^\infty,\nu}_t\right) \text{ in distribution uniformly over $(x,\nu)\in \Delta$}, \quad \forall 0\leq t\leq T.
\ee 
By Lemma \ref{prop.lln.Nmu} with $t=0$ and $e_{s^{N,1}_0}=e_x= e_{z^{x,\nu}_0}$, \eqref{eq:prop.VNnew} holds for $t=0$. Now suppose \eqref{eq:prop.VNnew} holds for $t$ and we prove the case for $t+1$. By Lemma \ref{prop.lln.Nmu} again, $\mu^{\bm \pitilde^N, \nu(N,x)}_{t+1}$ converges to $\mu^{\pitilde^\infty,\nu}_{t+1}$ in probability uniformly over $(x,\nu)\in \Delta$ and $\mu^{\pitilde^\infty,\nu}_{t+1}$ is deterministic. Then to prove the joint distribution of $\left( e_{s^{N,1}_{t}}, \mu^{\bm \pitilde^N, \nu(N,x)}_t \right)$ converges to the joint distribution of $\left(e_{z^{x,\nu}_{t}},  \mu^{\pitilde^\infty,\nu}_t\right)$ uniformly over $(x,\nu)\in \Delta$, it suffices to show that 
\be\label{eq:prop.VN0new}   
e_{s^{N,1}_{t+1}} \to e_{z^{x,\nu}_{t+1}} \text{ in distribution uniformly over $(x,\nu)\in \Delta$}.
\ee 
Notice that 
\bee 
e_{s^{N,1}_{t+1}}= e_{s^{N,1}_{t}}\cdot P^{\pitilde^{N}\left( \mu^{N,\pitildebm^N, \nu(N,x)}_t \right)}\left( \mu^{N,\pitildebm^N, \nu(N,x)}_t \right)\text{ in distribution for all $(x,\nu)\in \Delta$}. 
\eee
By Lemma \ref{prop.lln.Nmu}, $\mu^{\bm \pitilde^N, \nu(N,x)}_{t}$ converges to $\mu^{\pitilde^\infty,\nu}_{t}$ in probability uniformly over $(x,\nu)\in \Delta$. This together with the finiteness of $[d]$, Assumption \ref{assum.lipschitz} and the induction hypothesis implies that 
\begin{align*}
&e_{s^{N,1}_{t}}\cdot P^{\pitilde^{N}\left( \mu^{N,\pitildebm^N, \nu(N,x)}_t \right)}\left( \mu^{N,\pitildebm^N, \nu(N,x)}_t \right) \\
&\to e_{z^{x,\nu}_{t}}\cdot P^{\pitilde^\infty\left( \mu^{\pitilde^\infty,\nu}_t \right)}( \mu^{\pitilde^\infty,\nu}_t) = e_{z^{x,\nu}_{t+1}}\text{ in distribution uniformly over $(x,\nu)\in \Delta$},
\end{align*} 
which gives \eqref{eq:prop.VN0new}. In sum, \eqref{eq:prop.VNnew} holds.

Now we prove the desired result. We have for each $0\leq t\leq T$ that
\be\label{eq.prop.VNf0} 
\E\left[f^{{\pitilde^\infty}\left(s^{{\pitilde^\infty}, \nu}_{t}, \mu^{{\pitilde^\infty},\nu}_t \right)}\left(1+t, s^{{\pitilde^\infty}, \nu}_{t},\mu^{{\pitilde^\infty},\nu}_t \right) \right] = \E\left[  f^{{\pitilde^\infty}\left(z^{x,\nu}_{t}, \mu^{{\pitilde^\infty},\nu}_t \right)}\left(1+t, z^{x,\nu}_{t},\mu^{{\pitilde^\infty},\nu}_t \right)  \right]
\ee 
By combining \eqref{eq.thm.Nequi1}, \eqref{eq:prop.VNnew} and Assumption \ref{assum.lipschitz}, we have for each $0\leq t\leq T$ that
			\be\label{eq.prop.VNf}  
		\begin{aligned}
			\lim_{N\to\infty}\sup_{(x,\nu)\in \Delta}	\Bigg| &\E\left[ f^{\pitilde^N\left(s^{N,1}_t,  \mu^{N,\bm\pitilde^N,\nu(N,x)}_t\right)}\left(1+t, s^{N,1}_t,\mu^{N,\bm\pitilde^N,\nu(N,x)}_t \right)\right]\\
			& \; - \E\left[f^{{\pitilde^\infty}\left(z^{x,\nu}_{t}, \mu^{{\pitilde^\infty},\nu}_t \right)}\left(1+t, z^{x,\nu}_{t},\mu^{{\pitilde^\infty},\nu}_t \right) \right] \Bigg| = 0.
		\end{aligned}
		\ee 
	Then \eqref{eq.prop.VN0}, \eqref{eq.prop.VNf0} and \eqref{eq.prop.VNf} together imply that
	$$
	\begin{aligned}
		&\limsup_{N\to \infty}\sup_{(x,\nu)\in \Delta} \left| V^{N,\pitilde^N} \left(x,\nu(N,x)\right)-V^{{\pitilde^\infty}}(x,\nu) \right|\leq 2\eps,
	\end{aligned}
	$$
	and the proof is completed by the arbitrariness of $\eps$.}
\end{proof}

{\color{black}
\begin{Remark}\label{rm:new}
By comparing \eqref{eq.def.VNagentpitilde} with \eqref{eq.def.JNagentpitilde} and comparing \eqref{eq.V.relax} with the notation $J^\pitilde(x,\nu)$ introduced right after \eqref{eq.mu.population}, % on page 18, 
we see that following the same argument in the proof of Lemma \ref{prop.VN}, but replacing $1+t$ with $t$ in \eqref{eq.prop.VNf0} and \eqref{eq.prop.VNf}, yields
\be\label{eq:rm.new}  
	\lim_{N\to \infty}\sup_{(x,\nu)\in \Delta} \left| J^{N,1,\pitildebm^N} \left(x,\nu(N,x)\right)-J^{{\pitilde^\infty}}(x,\nu) \right|=0.
\ee 
Notice that $\pitildebm^N$ here still denotes $(\pitilde^N,\cdots, \pitilde^N)$.
\end{Remark}
}
	
\begin{Lemma}\label{prop.WN} The auxiliary functions defined in \eqref{eq.V.relax} and \eqref{eq.def.WN} satisfies that
	%Suppose Assumptions \ref{assum.bound}, \ref{assum:continuous} and \ref{assum.lipschitz} holds.  Let a sequence $(\pitilde^N)_{N\in \bar\N}\subset \Pitilde_c$ such that $\pitilde^N$ converges to $\pitilde^\infty\in \Pitilde_c$ uniformly  as $N\to \infty$. For each $N\geq 2$, define the replicating $N$-tuple $\pitildetildetildebm^N:=(\pitildetilde^N,\cdots, \pitildetilde^N)$, then
	\bee
	\lim_{N\to \infty}\sup_{(x,\nu)\in \Delta, y\in [d], u\in U} \left| W^{N, \pitilde^N} \left(x,\nu(N,x),y,u \right)- V^{\pitilde^\infty}\left(y,\nu\cdot P^{\pitilde^\infty(\nu)}(\nu) \right) \right| =0.
	\eee
\end{Lemma}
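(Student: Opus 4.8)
By the construction recalled in Section~\ref{subsubsec:formula.Jpasting} (formulas \eqref{eq:YnuxN}--\eqref{eq.def.WN}), one has
$W^{N,\pitilde^N}\!\left(x,\nu(N,x),y,u\right)=\E\!\left[V^{N,\pitilde^N}\!\left(y,Y^N\right)\right]$, where $Y^N:=Y^{N,\pitilde^N}(x,\nu(N,x),u)$ is the (random) one‑step empirical distribution obtained from the ordered configuration $\bm{\bar s}^N(x,\nu(N,x))$ when agent~$1$'s step‑$0$ action is frozen at $u$. Moreover, by \eqref{eq.mu.population}, the right‑hand side of Lemma~\ref{prop.WN} equals $V^{\pitilde^\infty}\!\left(y,\mu^{\pitilde^\infty,\nu}_1\right)$. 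The plan is therefore to follow the blueprint of the proof of Lemma~\ref{prop.VN}: establish a law of large numbers for $Y^N$ that is uniform over $(x,\nu)\in\Delta$ and over the frozen action $u\in U$, then combine it with the uniform estimate of Lemma~\ref{prop.VN} and the uniform continuity of $V^{\pitilde^\infty}(y,\cdot)$ on the compact simplex (Lemma~\ref{lm.pi.cts}).

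\textbf{Step 1 (uniform LLN for $Y^N$).} Since changing a single one of the $N$ agents' step‑$0$ actions perturbs the empirical measure by at most $2/N$, there is a coupling under which $\bigl|Y^N-\mu^{N,\pitildebm^N,\nu(N,x)}_1\bigr|\le 2/N$ almost surely; Lemma~\ref{prop.lln.Nmu} with $t=1$ then yields
$\lim_{N\to\infty}\sup_{(x,\nu)\in\Delta,\,u\in U}\P\bigl(\,|Y^N-\mu^{\pitilde^\infty,\nu}_1|\ge\delta\,\bigr)=0$ for every $\delta>0$.
(Alternatively, one repeats verbatim the $t\mapsto t+1$ computation in the proof of Lemma~\ref{prop.lln.Nmu}; the agent‑$1$ term is now deterministic of size $O(1/N)$ uniformly in $u$ because $|e_x\cdot P(\nu,u)|\le 1$.)

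\textbf{Step 2 (splitting and conclusion).} Fix $\eps>0$. For every $(x,\nu,y,u)$ write
\[
\bigl|W^{N,\pitilde^N}(x,\nu(N,x),y,u)-V^{\pitilde^\infty}(y,\mu^{\pitilde^\infty,\nu}_1)\bigr|
\le \E\bigl|V^{N,\pitilde^N}(y,Y^N)-V^{\pitilde^\infty}(y,Y^N)\bigr|
+\E\bigl|V^{\pitilde^\infty}(y,Y^N)-V^{\pitilde^\infty}(y,\mu^{\pitilde^\infty,\nu}_1)\bigr|.
\]
For the first term, note that $Y^N\in\Delta_N$ and that on the relevant event $(y,Y^N)\in\Delta_N$ (the only case entering the combination \eqref{eq.N.Jpasting}); since an element of $\Delta_N$ is its own nearest lattice point, $Y^N=Y^N(N,y)$, so the integrand is bounded by $\sup_{(x',\nu')\in\Delta}\bigl|V^{N,\pitilde^N}(x',\nu'(N,x'))-V^{\pitilde^\infty}(x',\nu')\bigr|$, which tends to $0$ by Lemma~\ref{prop.VN}, uniformly in all variables. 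For the second term, $V^{\pitilde^\infty}(y,\cdot)$ is bounded (Assumption~\ref{assum.bound}) and uniformly continuous on the compact $\Pc([d])$ (Lemma~\ref{lm.pi.cts}); splitting $\E$ over $\{|Y^N-\mu^{\pitilde^\infty,\nu}_1|<\delta\}$ and its complement and invoking Step~1 makes it $\le\eps$ for all large $N$, uniformly in $(x,\nu,y,u)$. Letting $\eps\downarrow0$ gives the claim.

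\textbf{Main obstacle.} The two genuinely delicate points are (a) getting the law of large numbers for $Y^N$ \emph{uniformly} over $(x,\nu)\in\Delta$ and over the frozen action $u$ — precisely what the rounding $\nu(N,x)$ and the sup‑over‑$\Delta$ formulations of Lemmas~\ref{prop.lln.Nmu}--\ref{prop.VN} were set up for — and (b) transferring the uniform bound of Lemma~\ref{prop.VN}, which is stated for lattice arguments $\nu'(N,x')$, to the \emph{random} empirical argument $Y^N$; this rests only on the elementary fact that a point of $\Delta_N$ coincides with its own nearest lattice point. Beyond these, the argument uses no ingredient not already available in Lemmas~\ref{lm.pi.cts}--\ref{prop.VN}.
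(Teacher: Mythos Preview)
Your proposal is correct and follows essentially the same route as the paper's proof: both represent $W^{N,\pitilde^N}(x,\nu(N,x),y,u)$ as $\E[V^{N,\pitilde^N}(y,Y^N)]$, reduce the uniform LLN for $Y^N$ to Lemma~\ref{prop.lln.Nmu} at $t=1$ via an $O(1/N)$ correction for agent~$1$'s frozen action, and then split into the two terms you describe, handling the first by Lemma~\ref{prop.VN} (using that a point of $\Delta_N$ is its own nearest lattice point) and the second by the uniform continuity of $V^{\pitilde^\infty}(y,\cdot)$ from Lemma~\ref{lm.pi.cts} together with the LLN. The only cosmetic difference is that the paper writes out the $O(1/N)$ correction explicitly rather than phrasing it as a coupling, and simply asserts $(y,Y^N)\in\Delta_N$ where you add the parenthetical about \eqref{eq.N.Jpasting}.
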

%Above convergence can also be uniform on $y$ when $[d]$ is countable?

\begin{proof}
	Since $[d]$ is finite, it suffices to prove for each fixed $y\in[d]$ that
	$$
	\lim_{N\to \infty}\sup_{(x,\nu)\in \Delta, u\in U} \left| W^{N, \pitilde^N}(x,\nu(N,x),y,u)- V^{\pi^\infty}\left(y,\nu\cdot P^{\pitilde^\infty(\nu)}(\nu) \right) \right| =0.
	$$ 
	Fix $y\in[d]$. %By \eqref{eq.muNx.converge},
%	\be\label{eq.prop.WN0}  
%	\begin{aligned}
%		&\lim_{N\to\infty} \sup_{(x,\nu)\in \Delta} \left|\frac{1}{N}\sum_{i=2}^N \bar{s}^{N,i}\left(x, \nu(N,x) \right)-\nu \right|  
%		=	\lim_{N\to\infty} \sup_{(x,\nu)\in \Delta} \left| \nu(N,x)-\frac{1}{N} e_x-\nu \right|=0
%	\end{aligned}
%	\ee  
{\color{black}	Recall \eqref{eq.def.sNnu}, the random variable $Y$ defined in \eqref{eq:YnuxN} and that $\pitildebm^N=(\pitilde^N,\cdots, \pitilde^N)$. Then for any $N\geq 2$, $(x,\nu)\in \Delta$ and $u\in U$, we have that 
$$
{\color{black} Y^{N,\pitilde^N}(x, \nu(N,x),u) =}	{\color{black} \P\left( \mu^{N,\pitildebm^N,\nu(N,x)} \Big| s^{N,1}_0=x, \alpha^{N,1}_0=u \right),} 
$$
which is in distribution equal to
	\begin{align*}
	& \frac{1}{N}\left[ e_x\cdot P(\nu(N,x), u) + \sum_{i=2}^N  e_{\bar{s}^{N,i}(x, \nu(N,x))}\cdot P\left( \nu(N,x),\alpha^{N,i, \nu(N,x)} \right) \right]\\
	&=\frac{1}{N}\left[  \sum_{i=1}^N  e_{\bar{s}^{N,i}(x, \nu(N,x))}\cdot P\left( \nu(N,x),\alpha^{N,i, \nu(N,x)} \right) \right]\\
	&\quad +\frac{1}{N}\left[ e_x\cdot P(\nu(N,x), u) - e_{\bar{s}^{N,1}(x, \nu(N,x))}\cdot P\left( \nu(N,x),\alpha^{N,1, \nu(N,x)}\right) \right],
	\end{align*}
	 where $(\alpha^{N,i, \nu(N,x)})_{1\leq i\leq N}$ are mutually independent random variables such that
	 $$\alpha^{N,i, \nu(N,x)}\sim \pitilde^N\left( \bar{s}^{N,i}\left(x, \nu(N,x) \right), \nu(N,x) \right) \text{ for } 1\leq i\leq N.$$
	By Dominated Convergence Theorem, 
	$$\sup_{u\in U, (x,\nu)\in \Delta} \frac{1}{N}\left| e_x\cdot P(\nu(N,x), u) - e_{\bar{s}^{N,1}_0}\cdot P\left( \nu(N,x),\alpha^{N,1, \nu(N,x)}\right) \right|\to 0, \; \text{as $N\to\infty$}.$$ 
	By applying Lemma \ref{prop.lln.Nmu} with $t=1$ in \eqref{eq.prop.Nmu}, we reach, uniformly for $(x,\nu)\in \Delta$ that 
	\begin{align*}
		\left|\frac{1}{N}\sum_{i=1}^N  e_{\bar{s}^{N,i}_0}\cdot P\left(\nu(N,x),\alpha^{N,i,\nu(N,x)} \right) -\nu\cdot P^{{\pitilde^\infty}(\nu)}(\nu)\right|%\\
		%&= {\color{black}\frac{N-1}{N}\left|\frac{1}{N-1}\sum_{i=2}^N  e_{\bar{s}^{N,i}_0}\cdot P\left(\nu(N,x),\alpha^{N,i,\nu(N,x)} \right) -\nu\cdot P^{{\pitilde^\infty}(\nu)}(\nu) -\frac{1}{N-1} \nu\cdot P^{{\pitilde^\infty}(\nu)}(\nu)\right|}\\
		 \to 0\quad \text{in probability,  as $N\to\infty$}.
	\end{align*}}
	As a consequence, 
	\be\label{eq.YN.convergence}  
\lim_{N\to\infty} \sup_{ (x,\nu)\in\Delta, u\in U} \P\left( \left| {\color{black} Y^{N,\pitilde^N}(x, \nu(N,x),u)} -\nu\cdot P^{{\pitilde^\infty}(\nu)}(\nu)\right|\geq \delta\right) =0 \quad \forall \delta>0.
	\ee 
Meanwhile, \eqref{eq.def.WN} tells that 
	\begin{align}
		&W^{N,\pitilde^N}\left( x,\nu(N,x),y,u\right) \notag
		%&\quad = \E_{\prod^{N-1}\pitilde^N}\left[ V^{N,1, \pitildebm^N}\left( 1,y,\frac{1}{N}\left[ e_x\cdot P\left(\nu(N,x), u \right) + \sum_{i=2}^N  e_{\bar{s}^{N,i}_0}\cdot P\left(\nu(N,x),\alpha^{N,i}\right) \right] \right)  \right] \notag\\
		 = \E\left[ V^{N,\pitilde^N}\left(y,{\color{black} Y^{N,\pitilde^N}(x, \nu(N,x),u)} \right)  \right].
	\end{align}
	Notice that
	\begin{align}
		&\sup_{(x,\nu)\in \Delta, u\in U}\left|\E\left[V^{N,\pitilde^N}\left( y, {\color{black} Y^{N,\pitilde^N}(x, \nu(N,x),u)} \right)  \right]- V^{\pitilde^\infty} \left( y,\nu\cdot P^{{\pitilde^\infty}(\nu)}(\nu) \right) \right| \notag\\
		& \leq \sup_{(x,\nu)\in \Delta, u\in U}\left|\E\left[V^{N,\pitilde^N}\left( y, {\color{black} Y^{N,\pitilde^N}(x, \nu(N,x),u)} \right)   \right]- \E\left[V^{{\pitilde^\infty}}\left( y,{\color{black} Y^{N,\pitilde^N}(x, \nu(N,x),u)} \right) \right]\right| \label{eq.prop.WN1}\\
		& \qquad + \sup_{(x,\nu)\in \Delta, u\in U}\left|\E\left[V^{{\pitilde^\infty}}\left( y,{\color{black} Y^{N,\pitilde^N}(x, \nu(N,x),u)} \right) \right]-V^{\pitilde^\infty} \left( y,\nu\cdot P^{{\pitilde^\infty}(\nu)}(\nu) \right) \right|. \notag
	\end{align}

{\color{black}Since $\Delta_N\subset \Delta$ for all $N\geq 2$, Lemma \ref{prop.VN} implies that
\be\label{eq:lm5.4.new}
\lim_{N\to \infty}\sup_{(\hat x, \hat \nu)\in \Delta_N} \left| V^{N,\pitilde^N} \left(\hat x, \hat \nu \right)-V^{{\pitilde^\infty}}(\hat x, \hat \nu) \right|=0.
\ee 	
Notice that the pair $( y,$ $Y^{N,\pitilde}(x,\nu,u) )$ belongs to $\Delta_N$ for all $N\geq 2$. Then
	\begin{align*}
		&\sup_{(x,\nu)\in \Delta, u\in U}\left|\E\left[V^{N,\pitilde^N}\left( y, {\color{black} Y^{N,\pitilde^N}(x, \nu(N,x),u)} \right)   \right]- \E\left[V^{{\pitilde^\infty}}\left( y,{\color{black} Y^{N,\pitilde^N}(x, \nu(N,x),u)} \right) \right]\right| \\
		&\leq  \sup_{(x,\nu)\in \Delta, u\in U}\E\left[\left|V^{N,\pitilde^N}\left( y, {\color{black} Y^{N,\pitilde^N}(x, \nu(N,x),u)} \right) - V^{{\pitilde^\infty}}\left( y,{\color{black} Y^{N,\pitilde^N}(x, \nu(N,x),u)} \right) \right|   \right]\\
		&\leq \sup_{(\hat x, \hat \nu)\in \Delta_N} \left| V^{N,\pitilde^N} \left( \hat x, \hat \nu \right)-V^{{\pitilde^\infty}}( \hat x, \hat \nu) \right|\to 0, \quad \text{as $N\to\infty$},
	\end{align*}
	where the last line follows from \eqref{eq:lm5.4.new}. That is, the first term on the right-hand side of \eqref{eq.prop.WN1} tends to zero, as $N$ goes to infinity.}
	
	As for the second term on the right-hand side of \eqref{eq.prop.WN1}, Lemma \ref{lm.pi.cts} together with the compactness of $\Pc([d])$ implies that $\nu\to V(x,\nu)$ is uniformly continuous for each $x\in[d]$. Thus, for any $\eps$, we can find $\delta>0$ such that 
	$$
	\sup_{x\in [d]}\left| V^{\pitilde^\infty}(x, \mu)-V^{\pitilde^\infty}(x,\bar\mu)\right| <\eps \quad \text{whenever $|\mu-\bar\mu|<\delta$}.
	$$
	Then by \eqref{eq.YN.convergence}, there exists $N_0$ such that 
	$$ \sup_{ \nu\in \Pc([d]), u\in U} \P\left( \left| {\color{black} Y^{N,\pitilde^N}(x, \nu(N,x),u)} -\nu\cdot P^{{\pitilde^\infty}(\nu)}(\nu)\right|\geq \delta\right) <\eps \quad \forall N\geq N_0.$$ 
	As a consequence,
	$$
	\begin{aligned}
		& \sup_{(x,\nu)\in \Delta, u\in U}\left|\E\left[V^{\pitilde^\infty}\left( y,{\color{black} Y^{N,\pitilde^N}(x, \nu(N,x),u)} \right) \right]-V^{\pitilde^\infty} \left( y,\nu\cdot P^{{\pitilde^\infty}(\nu)}(\nu) \right) \right|\\
		& \leq \sup_{(x,\nu)\in \Delta, u\in U}\eps\P\left( \left| {\color{black} Y^{N,\pitilde^N}(x, \nu(N,x),u)} -\nu\cdot P^{{\pitilde^\infty}(\nu)}(\nu)\right|<\delta  \right)\\
		&\qquad + 2\sup_{(x,\nu)\in \Delta, u\in U} |V^{\pi^\infty}(y,\mu)|\cdot \P\left( \left| {\color{black} Y^{N,\pitilde^N}(x, \nu(N,x),u)}-\nu\cdot P^{{\pitilde^\infty}(\nu)}(\nu)\right|\geq \delta  \right)\\
		&<\left( 1+ 2\sup_{(x,\nu)\in \Delta, u\in U} |V^{\pi^\infty}(y,\mu)|\right)\eps\quad \forall N\geq N_0,
	\end{aligned}
	$$
	which shows the convergence to zero of the second term on the right-hand side of \eqref{eq.prop.WN1}, and the desired result is proved.
\end{proof}

Since $(x,\nu)=(x,\nu(N,x))$ for $(x,\nu)\in\Delta_N$ and $\Delta_N\subset \Delta$ for each $N\geq 2$, 
%= \{ (x,\nu(N,x)): (x,\nu)\in \Delta\}$ for each $N\in \N$, 
Lemma \ref{prop.WN} implies the following.
\begin{Lemma}\label{cor.WN}
Take an arbitrary $\pitilde \in \Pitilde_c$, 
%for each $N\geq 2$, define the replicating $N$-tuple $\pitildebm^N:=(\pitilde,\cdots, \pitilde)$, 
then 
	\bee\label{eq.prop.WN'}   
	\lim_{N\to \infty}\sup_{(x,\nu)\in \Delta_N, y\in [d], u\in U} \left| W^{N, \pitilde} \left(x,\nu,y,u \right)- V^\pitilde\left(y,\nu\cdot P^{\pitilde(\nu)}(\nu) \right) \right| =0.
	\eee
\end{Lemma}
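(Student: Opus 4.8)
The plan is to derive Lemma~\ref{cor.WN} directly from Lemma~\ref{prop.WN} by specializing the latter to a constant sequence. Concretely, given $\pitilde\in\Pitilde_c$, I would set $\pitilde^N:=\pitilde$ for every $N\in\N\cup\{\infty\}$. This sequence lies in $\Pitilde_c$, and it satisfies hypothesis \eqref{eq.thm.Nequi1} trivially, since $\sup_{(x,\nu)\in\Delta}\Wc_1\bigl(\pitilde^N(x,\nu),\pitilde^\infty(x,\nu)\bigr)=0$ for all $N$. Hence Lemma~\ref{prop.WN} applies and gives
\[
\lim_{N\to\infty}\ \sup_{(x,\nu)\in\Delta,\ y\in[d],\ u\in U}\ \Bigl|\,W^{N,\pitilde}\bigl(x,\nu(N,x),y,u\bigr)-V^{\pitilde}\bigl(y,\nu\cdot P^{\pitilde(\nu)}(\nu)\bigr)\,\Bigr|=0 .
\]

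It then remains to transfer this statement from the parametrization by $\nu(N,x)$ to the index set $\Delta_N$ appearing in the lemma. The key observation is that $\nu(N,x)=\nu$ whenever $(x,\nu)\in\Delta_N$: indeed, by \eqref{eq.def.DeltaN} such a $\nu$ already satisfies $N\nu\in\N^d$ and $N\nu(x)>0$, so it realizes the minimal (zero) Euclidean distance among the points eligible in the definition of $\nu(N,x)$, and the tie-breaking rule is irrelevant. Since moreover $\Delta_N\subset\Delta$ for every $N\ge 2$, the supremum over $\Delta_N$ of $\bigl|W^{N,\pitilde}(x,\nu,y,u)-V^{\pitilde}(y,\nu\cdot P^{\pitilde(\nu)}(\nu))\bigr|$ is bounded above by the supremum over $\Delta$ of $\bigl|W^{N,\pitilde}(x,\nu(N,x),y,u)-V^{\pitilde}(y,\nu\cdot P^{\pitilde(\nu)}(\nu))\bigr|$, which tends to $0$ by the display above; this yields the claim.

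There is no substantive difficulty: the statement is a corollary. The only points requiring brief care are that $W^{N,\pitilde}(x,\nu,\cdot,\cdot)$ is defined --- through the reordering \eqref{eq.def.sNnu} --- precisely for $(x,\nu)\in\Delta_N$, so $\Delta_N$ is the natural domain of the supremum, and that Lemma~\ref{prop.WN} must be invoked with the \emph{constant} sequence $\pitilde^N\equiv\pitilde$, no genuine approximation being involved.
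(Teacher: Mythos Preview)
Your proof is correct and follows essentially the same route as the paper: the paper notes that $(x,\nu)=(x,\nu(N,x))$ for $(x,\nu)\in\Delta_N$ and $\Delta_N\subset\Delta$, then invokes Lemma~\ref{prop.WN} (implicitly with the constant sequence $\pitilde^N\equiv\pitilde$) to conclude. You simply make the constant-sequence specialization explicit, which is exactly the intended argument.
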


\subsection{Proof of Theorem \ref{thm.MFG.Neps}}\label{subsubsec:proof.Neps}
\begin{proof}
	{\color{black}It suffices to prove the result for the representative agent 1.} %Recall \eqref{eq.def.WN} and \eqref{eq.N.Jpasting}. 
	Suppose $\pitilde\in \Pitilde$ is a consistent MFG equilibrium in Definition \ref{def:sophisticated.ne}. For each $N\geq 2$, define the $N$-tuple $\pitildebm^N:= (\pitilde,\cdots,\pitilde)$. 
	%Take $\varpi\in \Pc(U)$ and $(x,\nu)\in \Delta_N$. %we have that
%	\begin{align}\label{eq.prop.0}
%		J^{N,1,(\pitildebm^N_{-1}, \varpi)\otimes_{1}\pitildebm^N}(x, \nu) =& \int_U \Bigg[ f(0,x,\nu, u)  +\sum_{y\in[d]} P(x,\nu, y, u) W^{N, \pitilde^N}(x,\nu,y, u)\Bigg] \varpi(u)du.
%	\end{align} 
	Now fix an arbitrary $\eps>0$. By Lemma \ref{cor.WN}, there exits $N_0$ such that for each $N\geq N_0$,
	\begin{align}\label{eq.prop.1}
		\sup_{\varpi\in \Pc(U), (x,\nu)\in \Delta_N}&\int_U \Bigg| \sum_{y\in[d]} P(x,\nu, y, u) W^{N,\pitilde}(x,\nu,y, u) \notag\\
		&\qquad - \sum_{y\in[d]}P(x,\nu,y,u) V^{\pitilde}(y, \nu\cdot P^{\pitilde}(\nu)) \Bigg|\varpi(du) \leq \frac\eps 2.
	\end{align}
	By taking difference between \eqref{eq.mfgV.pivarpi} and \eqref{eq.N.Jpasting}, the first terms for step 0 in these two formulas are canceled. This together with \eqref{eq.prop.1} yields, for all $N\geq N_0$ that
	\begin{align}\label{eq.prop.3}
		&\sup_{\varpi\in \Pc(U), (x,\nu)\in \Delta_N}\left| J^{N,1,(\pitildebm^N_{-1}, \varpi)\otimes_{1}\pitildebm^N}(x, \nu) -J^{\varpi\otimes_1 {\pitilde}} \left(x,\nu, \mu^{\pitilde, \nu}_{[1:\infty)} \right) \right| 
		\leq \frac\eps 2.
	\end{align}
	Similarly, by taking the difference between $J^{N,1,\pitildebm^N}(x, \nu)$ and $J^{{\pitilde}} \left(x,\nu, \mu^{\pitilde,\nu}_{[1:\infty)} \right)$, the first terms for step 0 are also canceled and thus
	
	\begin{align}\label{eq.prop.3'}
		&\sup_{(x,\nu)\in \Delta_N}\left| J^{N,1,\pitildebm^N}(x, \nu) -J^{{\pitilde}} \left(x,\nu, \mu^{\pitilde,\nu}_{[1:\infty)}  \right)\right| 
		\notag \\
		= &\sup_{(x,\nu)\in \Delta_N}\int_U \Bigg| \sum_{y\in[d]} P(x,\nu, y, u) W^{N,\pitilde}(x,\nu,y, u) \\
		&\qquad\qquad \qquad \qquad \qquad - \sum_{y\in[d]}P(x,\nu, y, u) V^{\pitilde}(y, \nu\cdot P^{\pitilde}(\nu)) \Bigg|\pitilde(x,\nu)(du) 
		\leq \frac\eps 2\quad \forall N\geq N_0. \notag 
	\end{align}
	Since $\pitilde$ is a consistent equilibrium in the MFG, we have 
	\begin{align}\label{eq.prop.5}
		\sup_{\varpi\in \Pc(U)}J^{\varpi\otimes_1 {\pitilde}} \left(x,\nu, \mu^{\pitilde, \nu}_{[1:\infty)}  \right) =J^{{\pitilde}} \left(x,\nu, \mu^{\pitilde, \nu}_{[1:\infty)}  \right)=J^{{\pitilde}} \left(x,\nu \right).
	\end{align}
	Hence, for each $N\geq N_0$, we have uniformly on $(x,\nu)\in \Delta_N$ that,
	\begin{align*}
		J^{N,1,\pitildebm^N}(x, \nu) \geq &J^{\pitilde}\left(x,\nu \right)-\frac\eps2\\
		\geq & \sup_{\varpi\in \Pc(U)}J^{\varpi\otimes_1 {\pitilde }}\left(x,\nu,  \mu^{\pitilde,\nu}_{[1:\infty)}  \right)-\frac\eps2
		\geq 
		\sup_{\varpi\in \Pc(U)}J^{N,1,(\pitildebm^N_{-1}, \varpi)\otimes_{1}\pitildebm^N}(x, \nu)  -\eps,
	\end{align*}
	where the first inequality follows from \eqref{eq.prop.3'}, the second inequality follows from \eqref{eq.prop.5}, the third inequality follows from \eqref{eq.prop.3}.

\end{proof}

\subsection{Proof of Theorem \ref{thm.Nequi.converge}}\label{subsubsec:proof.Nconverge}
\begin{proof}
	Now fix an arbitrary $(x,\nu)\in \Delta$. Since for each $N\in \N$, $\pitildebm^{N}=(\pitilde^{N},\cdots \pitilde^{N})$ is a consistent equilibrium in Definition \ref{def:N.sophisticated} for the $N$-agent game, we have that
	\be\label{eq.thm.converge}
	\begin{aligned}
		&J^{{N},1,(\pitildebm^{{N}}_{-1}, \varpi)\otimes_{1}\pitildebm^{{N}}}(x,\nu(N,x)) 
	%	&= \int_U \left[ f(0,x,\nu(N,x), u)+\sum_{y\in[d]} P(x,y,\nu(N,x), u)
	%	W^{{N}, \pitilde^{{N}}}\left( x,\nu(N,x), y,u \right) \right] \varpi(u)du. \\
		 \leq  J^{{N},1,\pitildebm^{N}}(x, \nu(N,x)),\quad \forall \varpi\in \Pc(U).
	\end{aligned} 
	\ee
	By Lemma \ref{prop.WN}, 
	\be\label{eq.thm.converge1}
	\lim_{N\to\infty}\sup_{(x,\nu)\in \Delta, y\in[d], u\in U}\left| W^{{N}, \pitilde^{{N}}}\left( x,\nu(N,x), y,u \right)-V^{\pitilde^\infty}\left(y,\nu\cdot P^{\pitilde^\infty(\nu)}(\nu) \right)\right| =0.
	\ee
{\color{black}By \eqref{eq.N.Jpasting}, 
	\be\label{eq:thm1}
	\begin{aligned}
		&J^{N,1,\left( \pitildebm^N_{-1}, \varpi \right)\otimes_{1}\pitildebm^N}(x, \nu(N,x)) \\
		&= \int_U \left[ f(0,x,\nu(N,x), u)+\sum_{y\in[d]} P(x,\nu(N,x), y, u)
		W^{N, \pitilde}(x,\nu(N,x),y,u)\right] \varpi(du). 
	\end{aligned} 
	\ee
	Then by taking $N\to\infty$,  \eqref{eq.mfgV.pivarpi}, \eqref{eq.muNx.converge}, \eqref{eq.thm.converge1}, \eqref{eq:thm1} and Assumption \ref{assum.lipschitz} together yield that
		\be\label{eq.thm.converge3}
	\lim_{N\to\infty}J^{{N},1,(\pitildebm^{{N}}_{-1}, \varpi){\color{black}\otimes_{1}}\pitildebm^{{N}}}(x,\nu(N,x))=J^{\varpi\otimes_1 \pitilde^\infty} \left( x,\nu, \mu^{\pitilde^\infty}_{[1:\infty)} \right)\quad \forall \varpi\in \Pc(U).
	\ee
Also, \eqref{eq:rm.new} in Remark \ref{rm:new} states that
%Taking $\varpi=\pitilde^N(x,\nu)$ in \eqref{eq.thm.converge3}, 
\be\label{eq.thm.converge5}
\lim_{N\to\infty}J^{{N},1,\pitildebm^{N}}(x, \nu(N,x))
%=\lim_{N\to\infty}J^{{\color{black} \pitilde^N\otimes_1 \pitilde^\infty } } \left( x,\nu, \mu^{\pitilde^\infty}_{[1:\infty)} \right)
=J^{\pitilde^\infty}(x,\nu).
\ee}
Then combining \eqref{eq.thm.converge}, \eqref{eq.thm.converge3} and \eqref{eq.thm.converge5} results in
	$$
	J^{\pitilde^\infty}(x,\nu)=\sup_{\varpi\in \Pc(U)} J^{\varpi\otimes_1 \pitilde^\infty}\left(x,\nu, \mu^{\pitilde^\infty}_{[1:\infty)} \right).
	$$
	Then the desired result follows from the arbitrariness of $(x,\nu)$.
\end{proof}

{\color{black}
\section{Conclusion}\label{sec:conclusion}

In this paper, we study time-inconsistent MFGs with non-exponential discounting in discrete time. %The main contributions are summarized as follows.
%\begin{itemize}
%	\item 
	We first introduce the classic equilibrium concept for the time-inconsistent MFG and provide a general existence result. 
This equilibrium concept coincides with the classic equilibrium in the MFG literature when the context is time-consistent.	
%	\item 
We then demonstrate that such a classic equilibrium functions as an approximate equilibrium for the corresponding time-inconsistent $N$-agent games, but only in a precommitment sense, and thus is not a true equilibrium from the perspective of a sophisticated agent. 
%The main drawback for the classical equilibrium is that it depends on the initial population and thus not subgame perfect.
%	\item 
To address this limitation, we introduce the concept of consistent equilibrium for both the MFG and the $N$-agent game, and we prove that a consistent MFG equilibrium serves as a consistent $\eps$-equilibrium for the $N$-agent game when $N$ is sufficiently large. We also establish the convergence of consistent $N$-agent equilibria to a consistent MFG equilibrium as $N$ tends to infinity. In addition, we present a case study, in which the existence of consistent MFG equilibria is provided.
%\end{itemize}
}

\bibliographystyle{plain}
\bibliography{reference}

\end{document}